\def\f12{\frac 1 2}
\def\a{\alpha}
\def\b{\beta}
\def\ga{\gamma}
\def\ep{\epsilon}
\def\La{\Lambda}
\def\si{\sigma}
\def\om{\omega}
\def\Om{\Omega}
\def\H{\mathcal{H}} 
\def\S{\mathcal{S}} 
\def\Lb{\underline{L}}
\def\Hb{\underline{\H}}
\def\pa{\partial}
\def\les{\lesssim}
\def\B{\mathcal{B}}
\def\cL{{\mathcal L}}
\def\N{\mathcal{N}}
\def\cJ{\mathcal{J}}
\def\cE{\mathcal{E}} 
\def\cM{\mathcal{M}} 
\def\cD{\mathcal{D}} 
\def\f12{\frac 1 2}
\def\div{\text{div}}
\def\Hy{\mathbb{H}}
\newcommand{\vol}{\textnormal{vol}}
\newcommand{\nabb}{\mbox{$\nabla \mkern-13mu /$\,}}
\newtheorem{Thm}{Theorem}[section]
\newtheorem{Prop}{Proposition}[section]
\newtheorem{Lem}{Lemma}[section]
\newtheorem{Cor}{Corollary}[section]
\newtheorem{Remark}{Remark}[section]
\theoremstyle{definition}
\begin{document}

\title{Pointwise decay for semilinear wave equations in $\mathbb{R}^{1+3}$}
\date{}
\author{Shiwu Yang}
\begin{abstract}
In this paper, we use Dafermos-Rodnianski's new vector field method to study the asymptotic pointwise decay properties for solutions of energy subcritical defocusing semilinear wave equations in $\mathbb{R}^{1+3}$. We prove that the solution decays as quickly as linear waves for $p>\frac{1+\sqrt{17}}{2}$, covering part of the sub-conformal case, while for the range $2<p\leq \frac{1+\sqrt{17}}{2}$, the solution still decays with rate at least $t^{-\frac{1}{3}}$. As a consequence, the solution scatters in energy space when $p>2.3542$. We also show that the solution is uniformly bounded when $p>\frac{3}{2}$.
\end{abstract}
\maketitle
\section{Introduction}
This paper is devoted to studying the global dynamics of solutions to the energy subcritical defocusing semilinear wave equation
\begin{equation}
  \label{eq:NLW:semi:3d}
  \Box\phi=|\phi|^{p-1}\phi,\quad \phi(0, x)=\phi_0(x),\quad \pa_t\phi(0, x)=\phi_1(x)
\end{equation}
in $\mathbb{R}^{1+3}$ with $1<p<5$.

The existence of global classical solutions has early been obtained by J\"{o}rgens in \cite{Jorgens61:energysub:NLW:lowerd}, with various kinds of extensions in \cite{Brenner79:globalregularity:NW}, \cite{Brenner81:globalregularity:d9}, \cite{velo85:global:sol:NLW}, \cite{Velo89:globalsolution:NLW}, \cite{Pecher76:NLW:global}, \cite{segal63:semigroup}, \cite{Vonwahl75:NW} and references therein.
 Strauss in \cite{Strauss:NLW:decay} investigated the asymptotics of the global solutions for the super-conformal case $3\leq p<5$. He showed that the solution scatters to linear wave in $H^1$ with compactly supported initial data (see detailed discussion regarding the scattering results in the author's companion paper \cite{yang:scattering:NLW} and references therein). This scattering result relied on the pointwise decay estimate $t^{\ep-1}$ with $\epsilon >0$, which has later been improved to $t^{-1}$ for the strictly super-conformal case $3<p<5$ and to $t^{-1}\ln t$ for the conformal case $p=3$ by Wahl in \cite{vonWahl72:decay:NLW:super}. The starting point of these asymptotic decay estimates is the conservation of approximate conformal energy derived by using the conformal Killing vector field as multiplier. The super-conformal structure of the equation plays the role that the corresponding conformal energy (including the potential energy contributed by the nonlinearity) is controlled by the initial data as the spacetime integral arising from the nonlinearity is nonnegative, which in particular implies the uniform bound of the $L^2$ norm of the solution and the time decay of the potential energy. These a priori bounds lead to the time decay of the solutions.
Another geometric point of view to see this conformal structure is the method of conformal compactification (see \cite{ChristodoulouYangM}, \cite{ChDNull}), based on which together with the representation formula for linear wave equation,
Bieli-Szpak obtained shaper decay estimates for the solution with compactly supported initial data in \cite{Bieli:3DNLW}.

To go beyond the super-conformal case, Pecher in \cite{Pecher82:decay:3d} observed that the potential energy still decays in time but with a weaker decay rate. For this range of power $1<p<3$, the spacetime integral arising from the nonlinearity mentioned above changes sign to be negative. This term could be controlled by using Gronwall's inequality with the price that the conformal energy grows in time with a rate linearly depending on the coefficient. Since the conformal energy contains weights $t^2$, the potential energy still decays in time when $p$ is not too small (sufficiently close to $3$ so that the coefficient is small). This weaker energy decay estimate is sufficiently strong to show the pointwise decay estimates for the solutions when $p>\frac{1+\sqrt{13}}{2}$. As a consequence, the solution scatters in energy space for $p>2.7005$.

The above discussion already indicates that small power $p$ makes the analysis more involved. Since we are expecting the solution decays in time, the larger power $p$ leads to faster decay of the nonlinearity. 
 The aim of this paper is to derive quantitative pointwise decay estimates for the solutions with small power $p$ by using the vector field method originally introduced by Dafermos and Rodnianski in \cite{newapp}.

To state our main results,  for $\gamma \geq 0$, define the weighted energy norm of the initial data
\begin{align*}
  \mathcal{E}_{k,\ga} =\sum\limits_{l\leq k}\int_{\mathbb{R}^3}(1+|x|)^{\ga+2l}(|\nabla^{l+1}\phi_0|^2+|\nabla^l \phi_1|^2)+(1+|x|)^{\ga}|\phi_0|^{p+1}dx.
\end{align*}
Our first result gives an improved lower bound for the power $p$ such that the solution decays as quickly as linear waves. 
\begin{Thm}
\label{thm:main}
 Consider the Cauchy problem to the energy subcritical defocusing semilinear wave equation \eqref{eq:NLW:semi:3d}. For initial data $(\phi_0, \phi_1)$ bounded in $\mathcal{E}_{1, \ga_0} $ for some constant $1<\ga_0<2$, the solution is global in time and  satisfies the following decay estimates:
\begin{itemize}
\item[1]. For the case when
\[
\frac{1+\sqrt{17}}{2}<p<5, \quad \max\{\frac{4}{p-1}-1, 1\}<\ga_0<\min\{p-1, 2\},
\]
then
\begin{equation*}
|\phi(t, x)|\leq C \sqrt{\mathcal{E}_{1, \ga_0} } (1+t+|x|)^{-1}(1+||x|-t|)^{-\frac{\ga_0-1}{2}};
\end{equation*}
\item[2]. If $2<p\leq \frac{1+\sqrt{17}}{2}$ and $1<\ga_0<p-1$, then
\begin{equation*}
|\phi(t, x)|\leq C \sqrt{\mathcal{E}_{1, \ga_0}  }  (1+t+|x|)^{-\frac{3+(p-2)^2}{(p+1)(5-p)}\ga_0}(1+||x|-t|)^{-\frac{\ga_0}{p+1}}
\end{equation*}
for some constant $C$ depending on $\ga_0$, $p$ and the zeroth order weighted energy $\mathcal{E}_{0, \ga_0} $.
\end{itemize}
 \end{Thm}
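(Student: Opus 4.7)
The strategy combines the $r^p$-weighted vector field method with a conformal multiplier, both adapted to exploit the defocusing sign of the nonlinearity. First I would multiply the equation by $\chi(r)(1+r)^{\ga_0}(\pa_t+\pa_r)\phi$, with $\chi$ a smooth cutoff supported at large $r$. After integration by parts over the slab bounded by two constant-$t$ slices, this yields a weighted energy flux on the terminal slice, a non-negative bulk integrand of order $r^{\ga_0-1}(|(\pa_t+\pa_r)\phi|^2+|\nabb\phi|^2)$ that provides integrated decay, and the defocusing potential term $\int r^{\ga_0}|\phi|^{p+1}$ with a good sign. Combined with standard energy identities in the interior and commutation with $\pa_t$, the angular rotations, and the scaling $t\pa_t+r\pa_r$, one obtains control of the first-order weighted energy in terms of $\cE_{1,\ga_0}[\phi]$.

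The next step is to convert energy-flux decay into pointwise decay via a Klainerman-Sobolev type inequality adapted to the weighted foliation, giving schematically
\[
|\phi(t,x)|\lesssim (1+t+|x|)^{-1}(1+||x|-t|)^{-(\ga_0-1)/2}\sqrt{\cE(t)}.
\]
In the regime $p>(1+\sqrt{17})/2$ the argument closes by bootstrap: under the linear-type decay just obtained, the nonlinear potential density is integrable in spacetime provided $\ga_0>4/(p-1)-1$, so $\cE(t)$ remains bounded. The threshold is algebraic, since $4/(p-1)<p-1$ reduces to $p^2-p-4>0$, which is precisely the condition for the admissible interval $\max\{4/(p-1)-1,1\}<\ga_0<\min\{p-1,2\}$ to be nonempty.

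For the subcritical regime $2<p\le (1+\sqrt{17})/2$, the linear decay cannot be bootstrapped, so I would follow the conformal-energy philosophy of Pecher. Using an outgoing multiplier $(1+t+r)^{\a}(\pa_t+\pa_r)\phi$ together with an ingoing analog produces a spacetime error of indefinite sign which is absorbed by Gronwall at the cost of a time-growth $(1+t)^{\de}$ of the conformal energy for some $\de=\de(p,\ga_0)$. Iterating the representation formula along backward null cones with a weak decay hypothesis then self-improves the pointwise estimate to the stated rate $t^{-\ga_0(3+(p-2)^2)/((p+1)(5-p))}$, the precise exponent arising from optimizing the trade-off between $\ga_0$, the growth rate $\de$, and the $p$-dependent loss in the Duhamel integral.

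The main obstacle lies in this second regime. The bad-sign term in the conformal identity must be controlled so precisely that the backward-cone integral produces the sharp decay, and the bootstrap must be designed so that feeding the improved pointwise decay back into the nonlinearity does not worsen the estimate on the conformal energy growth. This forces a joint choice of the weight $\a$ and the decay exponent as explicit functions of $p$ and $\ga_0$ so that the self-consistency conditions close with the optimal exponent $(3+(p-2)^2)/((p+1)(5-p))$.
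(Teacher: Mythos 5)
There is a genuine gap, concentrated in the subconformal regime and in the mechanism for pointwise decay. The paper's central new ingredient, which your proposal does not contain, is a \emph{uniform} (non-growing) weighted energy flux bound through every backward light cone $\mathcal{N}^{-}(q)$, obtained by applying the multiplier $X=v_+^{\ga}L+u_+^{\ga}\Lb$ with $1<\ga<\ga_0$ on the past domain $\mathcal{J}^{-}(q)$ and absorbing the indefinite-sign bulk term $\left|\frac{p-1}{p+1}\chi-\frac{\ga(v_+^{\ga-2}v+u_+^{\ga-2}u)}{p+1}\right||\phi|^{p+1}\les v_+^{\ga-1}|\phi|^{p+1}$ with the a priori spacetime bound $\iint v_+^{\ga_0-1-\ep}|\phi|^{p+1}\les \mathcal{E}_{0,\ga_0}[\phi]$ from the companion paper. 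The pointwise decay is then extracted from the representation formula on $\mathcal{N}^{-}(q)$ by H\"older interpolation between this flux and the sup-norm, followed by Gronwall; there is no Klainerman--Sobolev step, and none is available at this regularity since controlling the commuted nonlinearity $Z(|\phi|^{p-1}\phi)$ for boosts or scaling would itself presuppose the pointwise decay. Your large-$p$ bootstrap is plausible in outline (the algebraic threshold $p^2-p-4>0$ is indeed the right one), but the route through a weighted Klainerman--Sobolev inequality on a foliation controlled only by $\pa_t$, rotations and scaling does not deliver the claimed $(1+||x|-t|)^{-(\ga_0-1)/2}$ factor and is not how the estimate is closed.

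For $2<p\leq\frac{1+\sqrt{17}}{2}$ your plan reverts to Pecher's conformal-energy-with-Gronwall-growth philosophy, which is precisely the approach the theorem is designed to supersede: letting the conformal energy grow like $(1+t)^{\de}$ is what caps Pecher's method at $p>\frac{1+\sqrt{13}}{2}$ and yields a strictly weaker exponent, and no optimization of $\a$ and $\de$ recovers $\frac{3+(p-2)^2}{(p+1)(5-p)}\ga_0$ or reaches all $p>2$. The actual source of that exponent is a splitting of the backward-cone Duhamel integral at $\tilde{r}\sim t_0^{(p-1)\ga_0/(5-p)}$: the near-tip piece closes by Gronwall exactly as in the large-$p$ case, while the far piece is estimated \emph{directly} by the uniform weighted flux (paying decay but requiring no smallness or growth bookkeeping). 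Finally, your proposal does not address the interior region $\{t+2\geq|x|\}$ at all; the backward-cone argument only works in the exterior, and the paper must pass data onto the hyperboloid $\Hy$, conformally compactify the interior onto a truncated backward cone, and rerun the whole flux-plus-representation-formula scheme for the transformed equation $\Box\tilde\phi=\La^{3-p}|\tilde\phi|^{p-1}\tilde\phi$ with its singular weight. Without these three ingredients the stated decay rates cannot be reached.
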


 As a consequence of the above pointwise decay estimate, we extend Pecher's scattering result to a larger range of $p$. Recall the linear operator $\mathbf{L}(t)$ defined in \cite{yang:scattering:NLW}
 \begin{align*}
   \Box\mathbf{L}(t)(f, g)=0,\quad \mathbf{L}(0)(f, g)=f(x),\quad \pa_t \mathbf{L}(0)(f, g)=g(x).
 \end{align*}
\begin{Cor}
 \label{cor:scattering:3D}
 For $p>p_*$ (defined in the Section 6 and $p_*<2.3542$) and initial data bounded in $\mathcal{E}_{1, p-1}$, the solution $\phi$ of \eqref{eq:NLW:semi:3d} is uniformly bounded in the following mixed spacetime norm
 \begin{align*}
   \|\phi\|_{L_t^p L_x^{2p}}<\infty.
 \end{align*}
 Consequently the solution scatters in energy space, that is, there exists pairs $(\phi_0^{\pm}(x), \phi_1^{\pm}(x))$ such that
 \begin{align*}
      \lim\limits_{t\rightarrow\pm\infty}\|\phi(t, x)-\mathbf{L}(t)(\phi_0^{\pm}(x), \phi_1^{\pm}(x))\|_{\dot{H}_x^1}+\| \pa_t\phi(t,x)-\pa_t \mathbf{L}(t)(\phi_0^{\pm}(x), \phi_1^{\pm}(x))\|_{ L_x^2}=0.
\end{align*}
\end{Cor}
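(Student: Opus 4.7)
The plan is to deduce the corollary from the pointwise decay of Theorem~\ref{thm:main} via the standard reduction to a mixed spacetime bound. Specifically, once $\|\phi\|_{L_t^p L_x^{2p}}<\infty$ is established, H\"older in time gives $\|\,|\phi|^{p-1}\phi\,\|_{L_t^1 L_x^2}=\|\phi\|_{L_t^p L_x^{2p}}^p<\infty$. Then the energy estimate for the inhomogeneous linear wave equation applied to
\[
 \phi(t)-\mathbf{L}(t)(\phi_0,\phi_1)=-\int_0^t \mathbf{L}(t-s)\bigl(0,\,|\phi|^{p-1}\phi(s)\bigr)\,ds
\]
shows that the tail $\int_T^{t_2}\mathbf{L}(-s)(0,|\phi|^{p-1}\phi(s))\,ds$ is Cauchy in the energy norm as $T\to\infty$, yielding asymptotic data $(\phi_0^{\pm},\phi_1^{\pm})$ with the stated convergence.

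Since $p_*<2.3542<\tfrac{1+\sqrt{17}}{2}$, for $p$ near $p_*$ we work in the second alternative of Theorem~\ref{thm:main}. Fixing $\ga_0=p-1-\ep$ for a small $\ep>0$ gives
\[
|\phi(t,x)|\lesssim (1+t+|x|)^{-\a}(1+||x|-t|)^{-\b},\qquad \a=\tfrac{[3+(p-2)^2]\ga_0}{(p+1)(5-p)},\ \b=\tfrac{\ga_0}{p+1}.
\]
I would split $\mathbb{R}^3$ into the interior $|x|<t/2$, a neighborhood of the cone $t/2\le|x|\le 2t$, and the exterior $|x|>2t$, and estimate $\int|\phi|^{2p}\,dx$ on each. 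Away from the cone the combined weight is comparable to $\max(t,|x|)^{-\a-\b}$, which gives fast decay; near the cone the crude $L^{\infty}$ bound is too weak, so I write $|\phi|^{2p}=|\phi|^{p-1}\cdot|\phi|^{p+1}$, apply the pointwise bound to $|\phi|^{p-1}$, and pair against the weighted potential energy estimate
\[
\int_{\mathbb{R}^3}(1+||x|-t|)^{\ga_0}|\phi(t,x)|^{p+1}\,dx\lesssim 1,
\]
which is a byproduct of the $r$-weighted energy machinery underlying Theorem~\ref{thm:main}. Summing the three regional contributions yields $\|\phi(t)\|_{L^{2p}}^p\lesssim (1+t)^{-\lambda(p)}$ with an explicit rational exponent $\lambda(p)$; the threshold $p_*$ is defined by $\lambda(p_*)=1$, and $p>p_*$ is then equivalent to $\|\phi\|_{L_t^p L_x^{2p}}<\infty$.

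The main obstacle is the exponent optimization. Using only the time decay $\|\phi(t)\|_{L^{\infty}}\lesssim t^{-\a}$ together with the conserved energy bound $\|\phi(t)\|_{L^{p+1}}\lesssim 1$ (via the interpolation $\|\phi\|_{L^{2p}}\le\|\phi\|_{L^{p+1}}^{(p+1)/(2p)}\|\phi\|_{L^{\infty}}^{(p-1)/(2p)}$) yields only the insufficient condition $\a(p-1)>2$, which already fails near $p=2.35$. The improvement needed to reach $p_*<2.3542$ is to keep the transverse weight $(1+||x|-t|)^{-\b}$ throughout the cone region and to pair it against the weighted $L^{p+1}$ flux rather than the unweighted one, while also exploiting the faster decay in the interior and exterior. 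Balancing these contributions produces an algebraic inequality in $p$ whose positive root, computed numerically, lies below $2.3542$, matching the value stated in the corollary.
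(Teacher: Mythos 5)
Your reduction of scattering to the finiteness of $\|\phi\|_{L_t^pL_x^{2p}}$ (Duhamel plus $\||\phi|^{p-1}\phi\|_{L_t^1L_x^2}=\|\phi\|_{L_t^pL_x^{2p}}^p$) is exactly the paper's first step. The gap is in how you propose to prove the mixed-norm bound itself. Near the light cone the transverse weight buys you nothing: pairing $|\phi|^{p-1}$ against the fixed-time potential $\int(1+||x|-t|)^{\ga_0}|\phi|^{p+1}dx\les 1$ gives $\int|\phi|^{2p}dx\les \sup\big[(1+t+|x|)^{-(p-1)\a}(1+||x|-t|)^{-(p-1)\b-\ga_0}\big]$, and since $||x|-t|$ can be $O(1)$ in the wave zone the second factor is only bounded by $1$. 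You are left with $\|\phi(t)\|_{L^{2p}}^{p}\les (1+t)^{-(p-1)\a_p\ga_0/2}$, i.e. precisely the "insufficient condition" $(p-1)\a_p\ga_0>2$ that you yourself identified as failing. Numerically, at $p\approx 2.354$ and $\ga_0\to p-1$ one has $(p-1)^2\a_p\approx 0.65$, and this quantity stays below $1$ (let alone $2$) throughout the entire window $2<p\leq \frac{1+\sqrt{17}}{2}$, so your regional decomposition cannot close for any $p$ in the range where the corollary has new content. Your claim that "balancing these contributions produces an algebraic inequality whose root lies below $2.3542$" is asserted, not computed, and is false for the pairing you describe.

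The missing idea is that one must not work slice by slice in $t$ with a $u$-weighted potential; one must use the \emph{spacetime} bound of Proposition \ref{prop:spacetime:bd}, $\iint v_+^{\ga_0-1-\ep}|\phi|^{p+1}dxdt\les \mathcal{E}_{0,\ga_0}[\phi]$, whose forward weight $v_+^{\ga_0-1-\ep}\sim t^{\ga_0-1-\ep}$ is a gain of $t^{\ga_0-1}$ over the $u$-weight near the cone and is available only after integration in time. The paper writes $\|\phi(t)\|_{L^{2p}}^p=\big(\int|\phi|^{p+1}v_+^{\ga_0-1-\ep}\cdot|\phi|^{p-1}v_+^{1+\ep-\ga_0}dx\big)^{1/2}$, bounds $|\phi|^{p-1}v_+^{1+\ep-\ga_0}\les (1+t)^{1+\ep-\ga_0-(p-1)\a_p\ga_0}$ by Theorem \ref{thm:main}, and then applies Cauchy--Schwarz in $t$, putting the full spacetime integral in one factor and $(1+t)^{1+\ep-\ga_0-(p-1)\a_p\ga_0}$ in the other. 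The integrability condition thereby relaxes from $(p-1)\a_p\ga_0>2$ to $\ga_0-1+(p-1)\a_p\ga_0>1$, which with $\ga_0$ close to $p-1$ is exactly $f(p)=p-2+(p-1)^2\frac{3+(p-2)^2}{(5-p)(p+1)}-1>0$ and yields $p_*<2.3542$. Without this Cauchy--Schwarz in time against the global spacetime potential bound (or, equivalently, a $v_+^{\ga_0}$-weighted rather than $u_+^{\ga_0}$-weighted potential estimate), your argument does not reach the stated threshold.
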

We give several remarks.
\begin{Remark}
 One can also derive the pointwise decay estimates for the derivatives of the solution by assuming the boundedness of the second order weighted energy of the initial data.
\end{Remark}

\begin{Remark}
 The precise decay estimate obtained by Pecher in \cite{Pecher82:decay:3d} is the following
\[
|\phi|\leq C t^{\frac{6+2p-2p^2}{3+p}+\ep},\quad \frac{1+\sqrt{13}}{2}<p\leq 3
\]
with initial data bounded in $\mathcal{E}_{1, 2} $. Theorem \ref{thm:main} improves this decay estimate with weaker assumption on the initial data for a larger range of power $p$.
\end{Remark}

\begin{Remark}
  Note that the solution to the linear wave equation
  \[
  \Box\phi^{lin}=0,\quad \phi^{lin}(0, x)=\phi_0(x),\quad \pa_t\phi^{lin}(0, x)=\phi_1
  \]
  with data $(\phi_0, \phi_1)$ bounded in $\mathcal{E}_{1, \ga_0} $ for some $1<\ga_0\leq 2$ has the following pointwise decay property
  \begin{align*}
    |\phi^{lin}(t, x)|\leq C \sqrt{\mathcal{E}_{1, \ga_0}}(1+t+|x|)^{-1}(1+|t-|x||)^{-\frac{\ga_0-1}{2}}
  \end{align*}
  for some universal constant $C$. Thus when $\frac{1+\sqrt{17}}{2}<p<5$, for arbitrary large data $(\phi_0, \phi_1)$ bounded in $\mathcal{E}_{1, \ga_0} $, the solution to the nonlinear equation \eqref{eq:NLW:semi:3d} decays as quickly as the solution to the linear equation with the same initial data. This pointwise decay property is consistent with the scattering result obtained in the author's companion paper \cite{yang:scattering:NLW}, in which it has been shown that the solution to \eqref{eq:NLW:semi:3d} scatters in critical Sobolev space $\dot{H}^{\frac{3}{2}-\frac{2}{p-1}}$ and the energy space $\dot{H}^{1}$ when $\frac{1+\sqrt{17}}{2}<p<5$.
\end{Remark}
\begin{Remark}
  Our scattering result in energy space applies to power even below the Strauss exponent $p_c=1+\sqrt{2}$, for which small data global solution and scattering hold for the pure power semilinear wave equation with power above $p_c$ (see for example \cite{Pecher88:scattering:sharpp:3D}) while finite time blow up can occur with power below $p_c$ (see John's work in \cite{John79:blowup:NLW:3d}).
\end{Remark}
Our second result concerns the asymptotic behavior of the solution with small power $p\leq 2$.
\begin{Thm}
\label{thm:main:p2}
Consider the defocusing semilinear wave equation \eqref{eq:NLW:semi:3d} with initial data $(\phi_0, \phi_1)$ such that $\mathcal{E}_{1, 0} $ and $\mathcal{E}_{0, p-1} $ are finite. Then for all $\frac{3}{2}<p\leq 2$,
the solution $\phi$ to the equation \eqref{eq:NLW:semi:3d} exists globally  and is uniformly bounded in the following sense
\begin{equation*}
|\phi(t, x)|\leq C(\sqrt{\mathcal{E}_{1, 0} }+\mathcal{E}_{0, p-1}^{\frac{p}{p+1}}),\quad \forall (t, x)\in \mathbb{R}^{1+3}
\end{equation*}
for some constant $C$ depending only on $p$.
\end{Thm}
\begin{Remark}
The proof shows that the solution actually decays in the spatial variable $x$ but not in time $t$. We may note that there is a gap between the case $p>2$ and $p\leq 2$ regarding the time decay of the solution. Our first result Theorem \ref{thm:main} indicates that the solution decays at least $t^{-\frac{1}{3}}$ when $p>2$ while only boundedness of the solution can be obtained in Theorem \ref{thm:main:p2} for $p\leq 2$. We will fill this gap by using a new approach in our future work \cite{YangW:NLW:3d:p1}.
\end{Remark}

We now discuss the main ideas for the proof. As mentioned above, the existing approach (see for example \cite{Velo87:decay:NLW}, \cite{Pecher82:decay:3d}) to study the asymptotic behavior of solutions to \eqref{eq:NLW:semi:3d} relied on the following time decay of the potential energy
\begin{align}
\label{eq:timedecay:3D:Pecher}
  \int_{\mathbb{R}^3}|\phi|^{p+1}dx\leq C (1+t)^{\max\{4-2p, -2\}},\quad 1<p<5,
\end{align}
which is based on the energy estimate
\begin{align*}
  \int_{\mathbb{R}^{3}}t^2 |\phi|^{p+1}(t, x)dx+\int_0^{t}\int_{\mathbb{R}^3}(2p-6)s|\phi|^{p+1}(s, x)dxds\leq C\mathcal{E}_{0, 2}
\end{align*}
 derived by using the conformal Killing vector field $t^2\pa_t+r^2 \pa_r$ ($r=|x|$) as multiplier. Here the constant $C$ depends only on $p$. With this a priori decay estimate for the solution, a type of $L^q$ estimate for linear wave equation (prototype of Strichartz estimate, see for example \cite{Brenner75:Lp:LW}) yields the pointwise decay estimate for the solution. This approach only makes use of the time decay of the solution. However, it is well known that linear waves enjoy improved decay away from the light cone, which can be quantified in terms of the distance $u=t-|x|$ to the light cone. Our improvement comes from thoroughly utilizing such $u$ decay of linear waves.

 The method we used to explore this $u$ decay is the vector field method originally introduced by Dafermos-Rodnianski in \cite{newapp}. The new ingredient is the $r$-weighted energy estimate derived by using the vector field $r^{\ga}(\pa_t+\pa_r)$ as multiplier with $0\leq \ga\leq 2$.
  Applying this to equation \eqref{eq:NLW:semi:3d}, we obtain that
  \begin{align*}
    \iint_{\mathbb{R}^{1+3}}\frac{p-1-\ga}{p+1}r^{\ga-1}|\phi|^{p+1}dxdt\leq C \mathcal{E}_{0, \ga}.
  \end{align*}
  To get a useful estimate for the solution, we require that $0<\ga<p-1$. On the other hand, combined with an integrated local energy decay estimate obtained by using the vector field $f(r)\pa_r$ as multiplier and the classical energy estimate, the energy flux through the outgoing null hypersurface $\H_u$ (constant $u$ hypersurface) decays in $u$, that is,
  \begin{align*}
    \int_{\H_u} |\phi|^{p+1}d\sigma \leq C(1+|u|)^{-\ga}\mathcal{E}_{0, \ga}.
  \end{align*}
Integrating in $u$, we then get a weighted spacetime bound
\begin{align*}
   \iint_{\mathbb{R}^{1+3}} (1+|u|)^{\ga-1-\ep}|\phi|^{p+1}dxdt\leq C\mathcal{E}_{0, \ga},\quad \forall \ep>0
\end{align*}
by assuming that $\ga>1$ (this in particular forces $p>2$). This together with the above $r$-weighted energy estimate leads to the spacetime bound
\begin{align*}
  \iint_{\mathbb{R}^{1+3}}(1+t+|x|)^{\ga-1-\ep}|\phi|^{p+1}dxdt\leq C\mathcal{E}_{0, \ga},
\end{align*}
which is one of the main results in \cite{yang:scattering:NLW} as restated precisely in Proposition \ref{prop:spacetime:bd}. For the subconformal case $p<3$, since $\ga$ can be as large as $p-1$, in terms of time decay, this spacetime bound is stronger than \eqref{eq:timedecay:3D:Pecher} as $p-2>2p-5$. Our improvement on the asymptotic decay properties of the solution heavily relies on this uniform spacetime bound.

To show the pointwise decay estimates of solutions to \eqref{eq:NLW:semi:3d}, we start by obtaining a uniform weighted energy flux bound through the backward light cone $\N^{-}(q)$ emanating from the point $q=(t_0, x_0)$, that is, $t-t_0+|x-x_0|=0$. Consider the vector field
\begin{align*}
  X^{\gamma}=u_+^{\ga}(\pa_t-\pa_r)+v_+^{\ga}(\pa_t+\pa_r),\quad v_+=\sqrt{1+(t+|x|)^2},\quad u_+=\sqrt{1+u^2}.
\end{align*}
The case when $\ga=2$ corresponds to the conformal Killing vector field while the case when $1<\ga<2$ has been widely used (see for examples
\cite{Dafermos17:C0Kerr}, \cite{LindbladMKG}). Applying this vector field as multiplier to the region bounded by the backward light cone $\N^{-}(q)$ and the initial hypersurface, we obtain that
\begin{align}
\label{eq:intro:0}
  \int_{\mathcal{N}^{-}(q)}\big((1+\frac{x\cdot(x-x_0)}{|x||x-x_0|})v_+^{\ga}+u_+^{\ga}\big)|\phi|^{p+1}
d\sigma\leq C \mathcal{E}_{0, \ga_0},\quad \ga<\ga_0,
\end{align}
 for which the previous uniform spacetime bound controls the spacetime error term (see details in Proposition \ref{prop:EF:cone:NW:3d}).
 Once we have this uniform potential energy bound, we apply the representation formula to show the pointwise decay estimates for the solutions. The nonlinear term can be controlled by interpolation between the $L^\infty$ norm of the solution and the above potential energy decay. This idea is inspired by the method in \cite{Moncrief1}, \cite{Moncrief2} for proving the global solutions for Yang-Mills-Higgs equations. 

However, the procedure involves delicate estimates for the integrals of functions $u_+$, $v_+$ on the backward light cone $\mathcal{N}^{-}(q)$. For technical reasons, we discuss two cases: in the exterior region $\{t+2\leq |x|\}$ and in the interior region $\{t+2>|x|\}$ (by symmetry it suffices to consider the solution in the future $t\geq 0$). When $q$ locates in the exterior region, the backward light cone $\mathcal{N}^{-}(q)$ also lies in the exterior region. This helps to estimate the mentioned integrals. 
By using a type of bootstrap argument (or Gronwall's inequality), we then can derive the pointwise decay estimates of the solutions in the exterior region. 

When $q$ lies in the interior region, estimating integrals of functions of $u_+$, $v_+$ on $\mathcal{N}^{-}(q)$ becomes quite complicated. Instead, we use the method of conformal compactification. 
Pick up the hyperboloid $\mathbb{H}$ passing through the 2-sphere $\{t=0, |x|=2\}$. The region enclosed by $\mathbb{H}$ contains the interior region and is conformally equivalent to the compact backward cone $\{\tilde{t}+|\tilde{x}|\leq \frac{5}{6}\}$. The study of the asymptotic behavior of solutions to \eqref{eq:NLW:semi:3d} in the interior region is then reduced to control the growth of solutions to a class of semilinear wave equation on this compact region with initial data determined by the original solutions on the hyperboloid $\mathbb{H}$, which has already been understood. The argument to control the solution on the compact region is similar to that used for studying the solution in the exterior region.

The key point for the case when $p>2$ is that we are allowed to use the vector field $X^{\ga}$ with $\ga>1$, which in particular requires that $p>2$ (recall that the restriction $\ga<p-1$ is used to make the sign of the weighted potential energy be positive). Since the vector field $X^{\gamma}$ with $\gamma<1$ fails to be useful even for linear waves, the above argument does not work for the case when $p\leq 2$. However, inspired by the above argument, it is crucial to obtain some type of weighted potential energy estimate through backward light cones. The key observation now is that instead of using the vector field $X^{\ga}$ as multiplier, we can still use the vector field $r^{\ga}(\partial_t+\partial_r)$ with $0\leq \ga\leq 1$ to obtain that
 \begin{align*}
  \int_{\mathcal{N}^{-}(q)}\big((1+\frac{x\cdot(x-x_0)}{|x||x-x_0|})r^{\ga}+1\big)|\phi|^{p+1}d\sigma
\leq C \mathcal{E}_{0, \ga},\quad 0<\ga<p-1.
\end{align*}
This estimate is sufficient to conclude that the solution is uniformly bounded when $p>\frac{3}{2}$.

The plan of the paper is as follows: In section 2, we define some notations. In section 3, we use vector field method to derive a uniform weighted energy estimate for the solution through backward light cones. Then in section 4, we obtain the pointwise decay estimates for the solutions in the exterior region. In addition, we show necessary decay estimates for the linear evolutions in the interior region. In section 5, we study a class of semilinear wave equation on a compact backward cone. The approach is similar but this section is independent of others. In section 6, we conduct the conformal transformation and apply the results of Section 4 and 5 to conclude the pointwise decay estimates for the solutions in the interior region. The last section is devoted to the proof for the uniform boundedness of the solutions when $p>\frac{3}{2}$.

\section{Preliminaries and notations}
\label{sec:notation}

We use the standard polar local coordinate system $(t, r,
\om)$ of Minkowski space as well as the null coordinates $u=\frac{t-r}{2}$, $v=\frac{t+r}{2}$ with $\om$ parameterizing the unit two-sphere $\mathbb{S}^2$.  
Introduce a null frame $\{L, \Lb, e_1, e_2\}$ with
\[
L=\pa_v=\pa_t+\pa_r,\quad \Lb=\pa_u=\pa_t-\pa_r
\]
and $\{e_1, e_2\}$ an orthonormal basis of the sphere with
constant radius $r$. At any fixed point $(t, x)$, we may choose $e_1$, $e_2$ such that
\begin{equation}
 \label{eq:nullderiv}
 \begin{split}
&\nabla_{e_i}L=r^{-1}e_i,\quad \nabla_{e_i}\Lb=-r^{-1}e_i, \quad \nabla_{e_1}e_2=\nabla_{e_2}e_1=0,\quad \nabla_{e_i}e_i=-r^{-1}\pa_r,
 \end{split}
\end{equation}
where $\nabla$ is the covariant derivative in Minkowski space. The covariant derivative on the sphere with radius $r$ is $\nabb$. Define the functions
\[
u_+=\sqrt{1+u^2},\quad v_+=\sqrt{1+v^2}.
\]
Through out this paper, the exterior region will be referred as $\{(t, x)| u=\frac{t-|x|}{2}\leq -1, t\geq 0\}$ while the interior region will be $\{(t, x)| u\geq -1, t\geq 0\}$.


Define the hyperboloid
\begin{equation}
\label{eq:def4Hyperboloid}
\Hy:=\left\{(t, x)|(t^*)^2-|x|^2=(R^*)^{-1} t^*\right\}, \quad t^*=t+3, \quad R^*=\frac{3}{5},
\end{equation}
which splits into the future part $\Hy^{+}=\Hy\cap \{t\geq 0\}$ and the past part $\Hy^{-}=\Hy\cap \{t<0\}$. We may note here that the interior region defined above lies inside this hyperboloid.

For any $q=(t_0, x_0)\in \mathbb{R}\times \mathbb{R}^3$ and $r>0$, denote $\B_q(r)$ as the 3-dimensional ball at time $t_0$ with radius $r$ centered at point $q$, that is,
\begin{align*}
  \B_q( r)=\{(t,x)| t=t_0, |x-x_0|\leq r\}.
\end{align*}
 The boundary of $\B_q( r)$ is the 2-sphere $\S_q(r)$. On the initial hypersurface $\{t=0\}$, we use $\B_{r_1}^{r_2}$ to denote the annulus $\{t=0, r_1\leq |x|\leq r_2\}$.
Let $\mathcal{N}^{-}(q)$ be the past null cone of the point $q$
, that is,
 \begin{align*}
   \mathcal{N}^{-}(q):=\{(t, x)| |t-t_0|=|x-x_0|,\quad t\geq 0\}.
 \end{align*}
For simplicity we only consider the solution in the future $\{t\geq 0\}$.
 The region enclosed by this cone is the past of the point $q$ and we denote it as $\mathcal{J}^{-}(q)$, that is,
   \begin{align*}
   \mathcal{J}^{-}(q):=\{(t, x)| |x-x_0|\leq |t-t_0|,\quad t\geq 0\}.
 \end{align*}
Additional to the above introduced coordinates system, we may also use  the new coordinates $(\tilde{t}, \tilde{x})$ centered at $q=(t_0, x_0)$, that is, 
\[
\tilde{t}=t-t_0,\quad \tilde{x}=x-x_0,\quad \tilde{r}=|\tilde{x}|,\quad \tilde{\om}=\frac{\tilde{x}}{|\tilde{x}|},\quad \tilde{u}=\f12 (\tilde{t}-\tilde{r}),\quad \tilde{v}=\f12(\tilde{t}+\tilde{r}).
\]
We also have the associated null frame $\{\tilde{L}, \tilde{\Lb}, \tilde{e}_1, \tilde{e}_2\}$ verifying the same relation \eqref{eq:nullderiv}. Under this new coordinates, the past null cone $\mathcal{N}^{-}(q)$ can be characterized by $\{\tilde{v}=0\}\cap\{0\leq t\leq t_0\}$. Through out this paper, the coordinates $(\tilde{t}, \tilde{x})$ are always referred to the translated ones centered at the point  $q=(t_0, x_0)$ unless it is clearly emphasized.

For simplicity, for integrals in this paper, we will omit the volume form unless it is specified. More precisely we will use
\begin{align*}
  \int_{\mathcal{D}}f,  \quad \int_{\mathcal{N}^{-}(q)}f, \quad \int_{\{t=c\}} f
\end{align*}
to be short for
\begin{align*}
  \int_{\mathcal{D}}f dxdt,   \quad \int_{\mathcal{N}^{-}(q)}f 2\tilde{r}^{2}d\tilde{r}d\tilde{\om}, \quad \int_{\{t=c\}} f dx
\end{align*}
respectively.

Finally we make a convention through out this paper to avoid too many constants that $A\les B$ means that there exists a constant $C$, depending possibly on $p$, $\ga_0$ the weighted energy $\mathcal{E}_{0, \ga_0} $ such that $A\leq CB$.

\section{The weighted energy estimates through backward light cones}
In this section, we establish a uniform weighted energy flux bound on any backward light cone in terms of the zeroth order initial energy, based on the spacetime bound for the solution derived in the author's companion paper \cite{yang:scattering:NLW}, from where we recall the following:
\begin{Prop}
  \label{prop:spacetime:bd}
  For all $2<p\leq 5 $, $1<\ga_0<\min\{2,  p-1\}$ and $\ep>0$, the solution $\phi$ of \eqref{eq:NLW:semi:3d} is uniformly bounded in the following sense
\begin{align}
  \label{eq:spacetime:bd}
  \iint_{\mathbb{R}^{1+3}} v_+^{\ga_0-\ep-1}|\phi|^{p+1}dxdt \leq C \mathcal{E}_{0, \ga_0} 
\end{align}
for some constant $C$ relying only on $p$, $\ep$ and $\ga_0$.
\end{Prop}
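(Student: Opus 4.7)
The plan is to prove the spacetime bound by combining two weighted multiplier identities from the Dafermos--Rodnianski $r$-weighted hierarchy, exploiting the defocusing sign of the nonlinearity. The two ingredients are: an $r$-weighted spacetime bound on the potential energy obtained from the multiplier $X_1=r^{\ga}L$, and a $u$-decay of the potential energy flux through outgoing null cones $\H_u$ obtained by pairing the standard energy identity with a Morawetz-type multiplier $f(r)\pa_r$. These two bounds, each strong in complementary regions, are then patched together via the decomposition $\{r\gtrsim 1+t\}\cup\{r\lesssim 1+t\}$ and the equivalence $v_+\sim 1+t+r$.

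For the first ingredient, fix $\ga$ slightly smaller than $\ga_0$ (so $0<\ga<\min\{2,p-1\}$) and test the equation with $r^{\ga}L\phi$ on the slab $\{0\le t\le T\}$. Computing the deformation tensor of $X_1$ and integrating by parts produces, on the one hand, a bulk term from the wave operator which is a positive quadratic form in $L\phi$ and $\nabb\phi$ with weight $r^{\ga-1}$, and on the other hand a bulk contribution from the nonlinearity of the form
\begin{equation*}
\frac{p-1-\ga}{p+1}\iint_{\{0\le t\le T\}}r^{\ga-1}|\phi|^{p+1}\,dxdt,
\end{equation*}
whose coefficient is strictly positive by the choice $\ga<p-1$. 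The boundary terms at $\{t=T\}$ have good signs modulo contributions absorbed by the initial weighted energy $\cE_{0,\ga}[\phi]$. Letting $T\to\infty$ yields the bound $\iint r^{\ga-1}|\phi|^{p+1}\les\cE_{0,\ga}[\phi]$.

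For the second ingredient, I would combine the standard $\pa_t$-energy conservation (which controls the energy flux, including the potential part, through $\H_u$) with a Morawetz multiplier $f(r)\pa_r$ chosen so that the bulk term is nonnegative and the nonlinear contribution carries a controllable sign. A hierarchy/bootstrap argument within the $r$-weighted family then upgrades this to the pointwise flux decay
\begin{equation*}
\int_{\H_u}|\phi|^{p+1}\,d\sigma\les(1+|u|)^{-\ga}\cE_{0,\ga}[\phi].
\end{equation*}
Integrating this in $u$ against the weight $(1+|u|)^{\ga-1-\ep}$, which is integrable at infinity precisely because $\ga>1$, produces $\iint(1+|u|)^{\ga-1-\ep}|\phi|^{p+1}\les\cE_{0,\ga}[\phi]$. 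Combining with the $r$-weighted bound via the decomposition above and using $v_+\les r+(1+|u|)$ then yields the stated inequality after choosing $\ga$ within $\ep$ of $\ga_0$.

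The main obstacle is the second step: setting up the Morawetz-type estimate so that the flux through $\H_u$ decays at the sharp rate $(1+|u|)^{-\ga}$ required for the subsequent $u$-integration. The multiplier $f(r)\pa_r$ must be engineered so that its deformation tensor dominates the nonlinear error, and the upgrade from a single-slice estimate to uniform $u$-decay requires interpolating through the $r$-weighted hierarchy at a continuum of weights. The restriction $\ga_0>1$ (hence $p>2$) is intrinsic to this scheme, being the precise threshold needed for the $u$-integration in the last step to converge.
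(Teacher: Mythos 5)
Your proposal is correct and follows essentially the same route as the paper: the paper itself does not reprove this proposition but defers to the companion work \cite{yang:scattering:NLW}, and the strategy it sketches in the introduction — the $r^{\ga}L$ multiplier giving $\iint r^{\ga-1}|\phi|^{p+1}\les \cE_{0,\ga}[\phi]$ with positive coefficient $\frac{p-1-\ga}{p+1}$, the $(1+|u|)^{-\ga}$ decay of the flux through $\H_u$ from the Morawetz/energy combination, the $u$-integration using $\ga>1$, and the patching via $v_+\les r+(1+|u|)$ — is exactly what you describe. Your identification of the flux-decay step as the delicate one, and of $\ga_0>1$ (hence $p>2$) as the intrinsic threshold, is consistent with the paper's own discussion.
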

\begin{proof}
  See the main theorem in \cite{yang:scattering:NLW}.
\end{proof}
Using this spacetime bound, we establish the following uniform weighted energy flux bound.
\begin{Prop}
\label{prop:EF:cone:NW:3d}
Let $q=(t_0, x_0)$ be any point in $\mathbb{R}^{1+3}$ with $t_0\geq 0$. Then for solution $\phi$ of the nonlinear wave equation \eqref{eq:NLW:semi:3d} and for all $1< \ga<\ga_0< \min\{2, p-1\}$, we have the following uniform bound
\begin{equation}
\label{eq:Eflux:ex:EF}
\begin{split}
&\int_{\mathcal{N}^{-}(q)}\big((1+\tau)v_+^{\ga}+u_+^{\ga}\big)|\phi|^{p+1} 
\leq C \mathcal{E}_{0, \ga_0} 
\end{split}
\end{equation}
for some constant $C$ depending only on $p$, $\ga_0$ and $\ga$. Here $\tau=\om\cdot \tilde{\om}$ and the tilde components are measured under the coordinates $(\tilde{t}, \tilde{x})$ centered at the point $q=(t_0, x_0)$.
\end{Prop}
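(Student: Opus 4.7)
The plan is to implement the vector field multiplier method with $X=u_+^{\ga}\Lb+v_+^{\ga}L$ applied to the modified energy-momentum tensor
\[
T_{\mu\nu}[\phi]=\pa_\mu\phi\,\pa_\nu\phi-\f12 g_{\mu\nu}\left(g^{\a\b}\pa_\a\phi\,\pa_\b\phi+\frac{2}{p+1}|\phi|^{p+1}\right),
\]
which is divergence-free on solutions to \eqref{eq:NLW:semi:3d}. Since $\nab^\mu(T_{\mu\nu}X^\nu)=\f12 T^{\mu\nu}\pi^X_{\mu\nu}$, where $\pi^X$ is the deformation tensor of $X$, integration of this identity over $\cJ^{-}(q)$ followed by the divergence theorem produces the identity
\[
\text{Flux}_{\N^{-}(q)}[T,X]+\iint_{\cJ^{-}(q)}\f12 T^{\mu\nu}\pi^X_{\mu\nu}=\text{Flux}_{\{t=0\}\cap\cJ^{-}(q)}[T,X].
\]

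For the flux through the backward cone, the future-directed null tangent to the generators decomposes in the original null frame as $\tilde{\Lb}=\frac{1-\tau}{2}L+\frac{1+\tau}{2}\Lb+(\text{angular})$ with $\tau=\om\cdot\tilde\om$. Using $T(L,\Lb)=|\nabb\phi|^{2}+\tfrac{2}{p+1}|\phi|^{p+1}$, $T(L,L)=(L\phi)^{2}$, and $T(\Lb,\Lb)=(\Lb\phi)^{2}$, the integrand $T(X,\tilde{\Lb})$ contains a coefficient proportional to $(1+\tau)v_+^{\ga}+(1-\tau)u_+^{\ga}$ on the $|\phi|^{p+1}$ piece; after modifying the current by an auxiliary correction of the form $\a\phi^{2}X_\mu$ and absorbing the angular cross terms against the nonnegative squared-derivative contributions, I arrive at a boundary integrand comparable to the stated $(1+\tau)v_+^{\ga}+u_+^{\ga}$ up to a uniform constant. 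The initial data term on $\{t=0\}\cap\cJ^{-}(q)$ is straightforwardly bounded by $\cE_{0,\ga_0}[\phi]$ since $u_+^{\ga}+v_+^{\ga}\les(1+|x|)^{\ga_0}$ there and $\ga<\ga_0$.

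Computing $\pi^X$ in the null frame, since the weights depend only on $u$ and $v$ the contraction takes the schematic form
\[
\f12 T^{\mu\nu}\pi^X_{\mu\nu}\sim \ga\,v_+^{\ga-1}(L\phi)^{2}+\ga\,u_+^{\ga-1}(\Lb\phi)^{2}+A|\nabb\phi|^{2}+B|\phi|^{p+1},
\]
with weights $A,B$ of size $O(v_+^{\ga-1})$. For $1<\ga<\min\{2,p-1\}$ the quadratic derivative pieces carry nonnegative coefficients and are absorbed into the flux side; the residual $|\phi|^{p+1}$ contribution is sign-indefinite but pointwise bounded by $C\,v_+^{\ga-1}|\phi|^{p+1}$, which is precisely the quantity controlled by Proposition \ref{prop:spacetime:bd} once one picks $\ga<\ga_0-\ep$ for some small $\ep>0$:
\[
\iint_{\cJ^{-}(q)} v_+^{\ga-1}|\phi|^{p+1}\,dxdt\les \cE_{0,\ga_0}[\phi].
\]

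The main obstacle is the careful null-frame sign analysis of $T^{\mu\nu}\pi^X_{\mu\nu}$. Because $X$ is a conformal Killing field only at the borderline value $\ga=2$, for $1<\ga<2$ the deformation tensor has multiple sign-indefinite components whose contractions with $T^{\mu\nu}$ must all be shown to be either genuinely favorable or pointwise dominated by $v_+^{\ga-1}|\phi|^{p+1}$. The constraint $\ga>1$ (forcing $p>2$) is needed so that the $u_+$-weighted derivative term remains favorable, while $\ga<p-1$ ensures the positive coefficient $\frac{p-1-\ga}{p+1}$ of the favorable potential-energy bulk contribution. Combining the flux identity with the initial data bound and the spacetime bound from Proposition \ref{prop:spacetime:bd} then yields the stated estimate with a constant depending only on $p$, $\ga_0$, and $\ga$, uniform in $q$.
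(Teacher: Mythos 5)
Your overall strategy coincides with the paper's: the multiplier $X=v_+^{\ga}L+u_+^{\ga}\Lb$ applied over $\cJ^{-}(q)$, with the sign-indefinite bulk potential term bounded pointwise by $C v_+^{\ga-1}|\phi|^{p+1}$ and absorbed via Proposition \ref{prop:spacetime:bd}, and the initial-slice flux bounded by $\mathcal{E}_{0,\ga_0}[\phi]$. However, there are two genuine gaps. First, your schematic for the bulk term is wrong: for this $X$ one has $\pi^X_{LL}=\pi^X_{\Lb\Lb}=0$, so no favorable $(L\phi)^2$ or $(\Lb\phi)^2$ terms appear; the only nonvanishing components are $\pi^X_{L\Lb}=-\ga(v_+^{\ga-2}v+u_+^{\ga-2}u)$ and $\pi^X_{e_ie_i}=r^{-1}(v_+^{\ga}-u_+^{\ga})$, and the contraction produces the sign-indefinite term $-r^{-1}(v_+^{\ga}-u_+^{\ga})\,\pa^\mu\phi\pa_\mu\phi$, which contains an $L\phi\,\Lb\phi$ cross term that cannot be "absorbed into the flux side." A correction current of the form $\a\phi^2 X_\mu$ has divergence $X(\a)\phi^2+2\a\phi X(\phi)+\a\phi^2\,\mathrm{div}X$ and produces no $\pa^\mu\phi\pa_\mu\phi$ term, so it cannot cancel this; one needs the Lagrangian-type correction $J_\mu\mapsto J_\mu-\f12\pa_\mu\chi\,|\phi|^2+\f12\chi\,\pa_\mu|\phi|^2$ with the specific choice $\chi=r^{-1}(v_+^{\ga}-u_+^{\ga})$, after which the residual coefficient of $|\nabb\phi|^2$ is $\chi-\f12\ga(v_+^{\ga-2}v+u_+^{\ga-2}u)$, whose nonnegativity is itself a nontrivial convexity statement about $s\mapsto(1+s^2)^{\ga/2}$ valid precisely because $1<\ga<2$.

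Second, the positivity of the flux through $\mathcal{N}^{-}(q)$ for an arbitrary vertex $q$ is the hardest part of the argument and is not actually established by your sketch. After expanding $X$ in the null frame centered at $q$, the quadratic terms carry the coefficients $(1\pm\tau)v_+^{\ga}+(1\mp\tau)u_+^{\ga}$ together with a cross term $2(v_+^{\ga}-u_+^{\ga})(\tilde{\Lb}\phi)(\om\cdot\tilde{\nabb})\phi$; their nonnegativity requires rewriting everything in terms of $r\phi$ and a sharp Cauchy--Schwarz comparison, not merely "absorbing angular cross terms." More seriously, the correction $\chi$ generates lower-order $|\phi|^2$ and $\phi\cdot\pa(r\phi)$ terms on the cone with no sign; the paper's proof shows, by an exact computation, that these combine into a pure divergence along $\mathcal{N}^{-}(q)$ whose boundary contribution is a sphere integral on $\{t=0\}$ that cancels identically against the corresponding term from the initial-slice flux. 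Without carrying out this cancellation (or an equivalent argument), the claimed lower bound $\int_{\mathcal{N}^{-}(q)}((1+\tau)v_+^{\ga}+u_+^{\ga})|\phi|^{p+1}\les\mathcal{E}_{0,\ga_0}[\phi]$ does not follow.
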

\begin{proof}
Recall the energy momentum tensor for the scalar field $\phi$
\begin{align*}
  T[\phi]_{\mu\nu}=\pa_{\mu}\phi\pa_{\nu}\phi-\f12 m_{\mu\nu}(\pa^\ga \phi \pa_\ga\phi+\frac{2}{p+1} |\phi|^{p+1}),
\end{align*}
where $m_{\mu\nu}$ is the flat Minkowski metric on $\mathbb{R}^{1+3}$. Then we can compute that
\begin{align*}
  \pa^\mu T[\phi]_{\mu\nu}=&(\Box\phi-  |\phi|^{p-1}\phi)\pa_\nu\phi.
\end{align*}
Now for any vector fields $X$, $Y$ and any function $\chi$, define the current
\begin{equation*}
J^{X, Y, \chi}_\mu[\phi]=T[\phi]_{\mu\nu}X^\nu -
\f12\pa_{\mu}\chi \cdot|\phi|^2 + \f12 \chi\pa_{\mu}|\phi|^2+Y_\mu.
\end{equation*}
Then for solution $\phi$ of equation \eqref{eq:NLW:semi:3d}, we have the energy identity
\begin{equation}
\label{eq:energy:id}
\iint_{\mathcal{D}}\pa^\mu  J^{X,Y,\chi}_\mu[\phi] d\vol =\iint_{\mathcal{D}}div(Y)+ T[\phi]^{\mu\nu}\pi^X_{\mu\nu}+
\chi \pa_\mu\phi\pa^\mu\phi -\f12\Box\chi\cdot|\phi|^2 +\chi \phi\Box\phi d\vol
\end{equation}
for any domain $\mathcal{D}$ in $\mathbb{R}^{1+3}$. Here $\pi^X=\f12 \cL_X m$  is the deformation tensor for the vector field $X$.

In the above energy identity, choose the vector fields $X$, $Y$ and the function $\chi$ as follows:
\[
X=v_+^{\gamma} L+u_+^\gamma \Lb,\quad Y=0,\quad \chi=r^{-1}(v_+^{\ga}-u_+^{\gamma}).
\]
Take the region $\cD=\mathcal{J}^{-}(q)$, which is bounded by the backward light cone $\N^{-}(q)$ and the initial hypersurface. For the above chosen vector field $X$, we can compute that
\[
\nabla_{L}X=\gamma v_+^{\gamma-2}v L,\quad \nabla_{\Lb}X= \gamma u_+^{\gamma-2}u \Lb,\quad \nabla_{e_i}X=r^{-1}(v_+^\gamma-u_+^{\gamma}) e_i.
\]
Then the non-vanishing components of the deformation tensor $\pi_{\mu\nu}^X$ are
\[
\pi^X_{L\Lb}=-\gamma \left(v_+^{\gamma-2}v+u_+^{\gamma-2}u\right),\quad \pi^X_{e_i e_i}=r^{-1}(v_+^{\ga}-u_+^{\gamma}).
\]
Therefore we can compute that
\begin{align*}
&T[\phi]^{\mu\nu}\pi^X_{\mu\nu}=2T[\phi]^{L\Lb}\pi^X_{L\Lb}+T[\phi]^{e_ie_i}\pi^X_{e_ie_i}\\
&=-\f12\ga (v_+^{\gamma-2}v+u_+^{\gamma-2}u)(|\nabb\phi|^2+\frac{2}{p+1}|\phi|^{p+1})+r^{-1}(v_+^{\ga}-u_+^{\gamma})(|\nabb\phi|^2-\pa^\mu \phi \pa_\mu\phi-\frac{2}{p+1}|\phi|^{p+1})\\
&=\left(-\f12\ga(v_+^{\gamma-2}v+u_+^{\gamma-2}u)+r^{-1}(v_+^{\ga}-u_+^{\gamma})\right)|\nabb\phi|^2-r^{-1}(v_+^{\ga}-u_+^{\gamma})\pa^\mu\phi \pa_\mu\phi\\
&\quad +\left(-\f12\ga(v_+^{\gamma-2}v+u_+^{\gamma-2}u)-r^{-1}(v_+^{\ga}-u_+^{\gamma})\right)\frac{2}{p+1}|\phi|^{p+1}.
\end{align*}
 Since the function $\chi$ is spherically symmetric, we can compute that
\begin{align*}
\Box \chi=-r^{-1}L\Lb (r\chi)=-2r^{-1}L\Lb(v_+^\ga-u_+^\ga)=0,\quad r>0.
\end{align*}
At $r=0$ it grows at most $r^{\gamma-3}$. Therefore we can write that
\begin{align*}
&T[\phi]^{\mu\nu}\pi^X_{\mu\nu}+
\chi \pa_\mu\phi \pa^\mu\phi -\f12\Box\chi\cdot|\phi|^2+\chi\phi\Box\phi  \\
&=\left(\chi-\f12\ga(v_+^{\gamma-2}v+u_+^{\gamma-2}u)\right)|\nabb\phi|^2 + \left(\frac{p-1}{p+1}\chi-\frac{(v_+^{\ga-2}v+u_+^{\ga-2}u)\ga}{p+1}\right)|\phi|^{p+1}.
\end{align*}
Denote $f(s)=(1+s^2)^{\frac{\ga}{2}}$. Then $\chi=\frac{f(v)-f(u)}{v-u}$. It can be checked directly that the derivative $f'(s)$ is concave. In particular we conclude that
\begin{align*}
  \chi=\frac{f(v)-f(u)}{v-u}\geq \f12 (f'(v)+f'(u))=\f12 \ga(v_+^{\ga-2}v+u_+^{\ga-2}u).
\end{align*}
Therefore the coefficient of $|\nabb\phi|^2$ is nonnegative. On the other hand, the coefficient of $|\phi|^{p+1}$ has an upper bound 
\begin{align*}
  |\frac{p-1}{p+1}\chi-\frac{(v_+^{\ga-2}v+u_+^{\ga-2}u)\ga}{p+1}|\leq Cv_+^{\ga-1}
\end{align*}
for some constant $C$ depending only on $p$ and $\ga$. We remark here that this coefficient is  nonnegative for the super-conformal case when $p\geq 3$. We use Proposition \ref{prop:spacetime:bd} to control this potential term for the sub-conformal case when the sign is indefinite.

We next compute the boundary integrals on the left hand side of the energy identity \eqref{eq:energy:id}, which consists of the integral on the initial hypersurface $\B_{(0, x_0)}(t_0)$ and the integral on the backward light cone $\mathcal{N}^{-}(q)$. Let's first compute the boundary integral on the initial hypersurface, under the coordinates system $(t, x)$. As the initial hypersurface $\B_{(0, x_0)}(t_0)$ has the volume form $dx$, the contraction reads
\begin{align*}
i_{J^{X, \chi}[\phi ]}d\vol&= T[\phi ]_{0 L}X^L+T[\phi ]_{\Lb 0}X^{\Lb}- \f12 \pa_t\chi |\phi|^2+ \f12\chi \pa_t|\phi|^2\\
&= \f12 v_+^\ga( |L\phi|^2+|\nabb\phi|^2+\frac{2}{p+1}|\phi|^{p+1})-\f12\pa_t\chi \cdot |\phi|^2+\f12\chi \pa_t|\phi|^2 \\
&\quad +\f12 u_+^\ga(|{\Lb}\phi|^2+|\nabb\phi|^2+\frac{2}{p+1}|\phi|^{p+1})\\
&=\f12(u_+^\ga+v_+^\ga)( |\nabb\phi|^2+\frac{2}{p+1}|\phi|^{p+1})+\f12 v_+^\ga r^{-2}|L(r\phi)|^2\\
&\quad +\f12 u_+^\ga r^{-2}|{\Lb}(r\phi)|^2-\div(\om r^{-1}|\phi|^2(u_+^\ga+v_+^\ga)).
\end{align*}
Here $\om=\frac{x}{|x|}$ can be viewed as a vector on $\mathbb{R}^{3}$ and the divergence is taken over the initial hypersurface $\B_{(0, x_0)}(t_0)$. The integral of the divergence term can be computed by using integration by parts. Under the coordinates $\tilde{x}=x-x_0$ on the initial hypersurface, we have
\begin{align*}
\int_{\B_{(0, x_0)}(t_0)} \div (\om r^{-1}|\phi|^2(u_+^\ga+v_+^\ga))dx&=\int_{\B_{(0, x_0)}(t_0)} \div (\om r^{-1}|\phi|^2(u_+^\ga+v_+^\ga))d\tilde{x}\\
&=\int_{\S_{(0, x_0)}(t_0)} \tilde{r}^2 \tilde{\om} \cdot\om r^{-1}|\phi|^2(u_+^\ga+v_+^\ga)d\tilde{\om}.
\end{align*}
In particular we derive that
\begin{equation}
\label{eq:PWE:ex:bxt0}
\begin{split}
 &\int_{\B_{(0, x_0)}(t_0)} i_{J^{X,\chi}[\phi]}d\vol + \int_{\S_{(0, x_0)}(t_0)} \tilde{r}^2 \tilde{\om} \cdot\om r^{-1}|\phi|^2(u_+^\ga+v_+^\ga)d\tilde{\om}\\
&=\f12 \int_{\B_{(0, x_0)}(t_0)}     v_+^{\ga} r^{-2}|L(r\phi)|^2 + (u_+^{\ga}+v_+^{\ga})(|\nabb\phi|^2+\frac{2}{p+1}|\phi|^{p+1})+u_+^{\ga}r^{-2}|\Lb(r\phi)|^2 dx \\
 &\leq C \mathcal{E}_{0, \ga} 
 \end{split}
\end{equation}
for some constant $C$ depending only on $\ga$.

For the boundary integral on the backward light cone $\mathcal{N}^{-}(q)$, we shift to the coordinates centered at the point $q=(t_0, x_0)$.
 Recall the volume form
\[
d\vol=dxdt=d\tilde{x}d\tilde{t}=2\tilde{r}^2 d\tilde{v}d\tilde{u}d\tilde{\om}.
\]
Since the backward light cone $\mathcal{N}^{-}(q)$  can be characterized by $\{\tilde{v}=0\}$ under these new coordinates $(\tilde{t}, \tilde{x})$, we therefore have
\begin{align*}
-i_{J^{X, \chi}[\phi]}d\vol=J_{\tilde{\Lb}}^{X, \chi}[\phi]\tilde{r}^2d\tilde{u}d\tilde{\om}= ( T[\phi]_{\tilde{\Lb}\nu}X^\nu -
\f12(\tilde{\Lb}\chi) |\phi|^2 + \f12 \chi\cdot\tilde{\Lb}|\phi|^2 ) \tilde{r}^2d\tilde{u}d\tilde{\om}.
\end{align*}
For the main quadratic terms, we first can compute that
\begin{align*}
T[\phi]_{\tilde{\Lb}\nu}X^\nu =T[\phi]_{\tilde{\Lb}\tilde{\Lb}}X^{\tilde{\Lb}}+T[\phi]_{\tilde{\Lb}\tilde{L}}X^{\tilde{L}}+T[\phi]_{\tilde{\Lb}\tilde{e}_i}X^{\tilde{e}_i}.
\end{align*}
Since the vector field $X$ is given under the coordinates $(t, x)$, we need to write it under the new null frame $\{\tilde{L}, \tilde{\Lb}, \tilde{e}_1, \tilde{e}_2\}$ centered at the point $q$. Note that
\begin{align*}
\pa_r=\om \cdot \nabla=\om \cdot \tilde{\nabla}=\om\cdot \tilde{\om}\pa_{\tilde{r}}+ \om\cdot (\tilde{\nabla}-\tilde{\om}\pa_{\tilde{r}}).
\end{align*}
Here $\om=\frac{x}{|x|}$, $\nabla=(\pa_{x^1}, \pa_{x^2}, \pa_{x^3})$. Thus we can write that
\begin{align*}
X &=(v_+^\ga+u_+^\ga)\pa_{t}+(v_+^\ga-u_+^\ga)\pa_r\\
&=(v_+^\ga+u_+^\ga)\pa_{\tilde{t}}+(v_+^\ga-u_+^\ga)(\om\cdot \tilde{\om}\pa_{\tilde{r}}+ \om\cdot \tilde{\nabb})\\
&=\f12 \left(u_+^\ga+v_+^\ga+(v_+^\ga-u_+^\ga)\om\cdot \tilde{\om}\right)\tilde{L}+\f12 \left(u_+^\ga+v_+^\ga-(v_+^\ga-u_+^\ga)\om\cdot \tilde{\om}\right)\tilde{\Lb}+(v_+^\ga-u_+^\ga)\om\cdot \tilde{\nabb}
\end{align*}
with $\tilde{\nabb}=\tilde{\nabla}-\pa_{\tilde{r}}$. Thus we can compute the quadratic terms
\begin{align*}
T[\phi]_{\tilde{\Lb}\nu}X^\nu 
=&\left((1-\tau)v_+^\ga+(1+\tau)u_+^\ga\right)|{\tilde{\Lb}}\phi|^2 +\left((1+\tau)v_+^\ga+(1-\tau)u_+^\ga\right)(|\tilde{\nabb}\phi|^2+\frac{2}{p+1}|\phi|^{p+1})\\
&+2 (v_+^\ga-u_+^\ga) ({\tilde{\Lb}}\phi) (\om\cdot \tilde{\nabb})\phi.
\end{align*}
 Here recall that $\tau=\om\cdot \tilde{\om}$.
 It turns out that these terms are nonnegative but we need to estimate them together with the lower order terms arising from the function $\chi$.
We compute that
\begin{align*}
&\tilde{\Lb}(r)=-\pa_{\tilde{r}}(r)=-\tilde{\om}_i\pa_i(r)=-\tilde{\om}\cdot \om =-\tau,\\
&\tilde{\nabb}(r)=(\tilde{\nabla}-\tilde{\om}\pa_{\tilde{r}})(r)=\om-\tilde{\om}\tau.
\end{align*}
Therefore we can write
\begin{align*}
&-\f12 r^2 (\tilde{\Lb}{\chi})|\phi|^2+\f12 r^2\chi \tilde{\Lb}|\phi|^2=(r\chi)( {\tilde{\Lb}}(r\phi)+\tau \phi) \phi-\f12(r\tilde{\Lb}(r\chi)+\tau r\chi)|\phi|^2,\\
&r^2|\tilde{\Lb}\phi|^2=|{\tilde{\Lb}}(r\phi)-\tilde{\Lb}(r)\phi|^2=|{\tilde{\Lb}}(r\phi)|^2+\tau^2|\phi|^2+2 {\tilde{\Lb}}(r\phi) \tau\phi,\\
& r^2|\tilde{\nabb}\phi|^2
=|\tilde{\nabb}(r\phi)|^2+(1-\tau^2)|\phi|^2-2(\om-\tilde{\om}\tau)\cdot\tilde{\nabb}(r\phi) \phi,\\
& r^2 ({\tilde{\Lb}}\phi)  (\om\cdot \tilde{\nabb})\phi={\tilde{\Lb}}(r\phi) (\om \cdot \tilde{\nabb})(r\phi)-\tau(1-\tau^2)|\phi|^2+\phi \tau(\om\cdot \tilde{\nabb})(r\phi) -(1-\tau^2){\tilde{\Lb}}(r\phi)\phi.
\end{align*}
Notice that
\[
\om\cdot \tilde{\nabb}=\om\cdot (\tilde{\om}\times \tilde{\nabla})=\om\times \tilde{\om}\cdot \tilde{\nabb}.
\]
Since $v_+\geq u_+$, we therefore can show that the quadratic terms are nonnegative
\begin{align*}
&\left((1-\tau)v_+^\ga+(1+\tau)u_+^\ga\right)|\tilde{\Lb}(r\phi)|^2+\left((1+\tau)v_+^\ga+(1-\tau)u_+^\ga\right)|\tilde{\nabb}(r\phi)|^2+2 (v_+^\ga-u_+^\ga){\tilde{\Lb}}(r\phi) (\om \cdot \tilde{\nabb})(r\phi)\\
&\geq 2\sqrt{(v_+^{\ga}+u_+^{\ga})^2-\tau^2(v_+^{\ga}-u_+^{\ga})^2}|\tilde{\nabb}(r\phi)||\tilde{\Lb}(r\phi)|-2 (v_+^\ga-u_+^\ga)\sqrt{1-\tau^2}|{\tilde{\Lb}}(r\phi)| |\tilde{\nabb}(r\phi)|\\
& \geq 0.
\end{align*}
For the other lower order terms, we compute  that
\begin{align*}
&\left((1-\tau)v_+^\ga+(1+\tau)u_+^\ga\right)(\tau^2|\phi|^2+2{\tilde{\Lb}}(r\phi)\tau\phi )+(r\chi)( {\tilde{\Lb}}(r\phi)+\tau \phi) \phi-\f12(r\tilde{\Lb}(r\chi)+\tau r\chi)|\phi|^2\\
&+\left((1+\tau)v_+^\ga+(1-\tau)u_+^\ga\right)\big((1-\tau^2)|\phi|^2-2(\om-\tilde{\om}\tau)\tilde{\nabb}(r\phi) \phi\big)\\
&+(v_+^\ga-u_+^\ga)\big(-2\tau(1-\tau^2)|\phi|^2+2\tau \phi (\om\cdot \tilde{\nabb})(r\phi)-2\phi (1-\tau^2){\tilde{\Lb}}(r\phi) \big)\\
&=(-\f12 r\tilde{\Lb}(r\chi)+v_+^\ga+u_+^\ga)|\phi|^2
+2(v_+^\ga+u_+^\ga)(\tau {\tilde{\Lb}}-\om\cdot \tilde{\nabb})(r\phi) \phi\\
&=-r^2\tilde{r}^{-1} \tilde{\Om}_{ij}(r^{-3}(v_+^\ga+u_+^\ga) \om_j\tilde{\om}_i |r\phi|^2)+\tilde{r}^{-2}r^2\tilde{\Lb}(r^{-1}\tau\tilde{r}^2(v_+^\ga+u_+^\ga) |\phi|^2)\\
&+(-\f12 r\tilde{\Lb}(r\chi)+v_+^\ga+u_+^\ga)|\phi|^2-\tilde{r}^{-2}r^2\tilde{\Lb}(r^{-3}\tau\tilde{r}^2(v_+^\ga+u_+^\ga)) |r\phi|^2+r^2 \tilde{r}^{-1}\tilde{\Om}_{ij}(r^{-3}(v_+^\ga+u_+^\ga) \om_j\tilde{\om}_i) |r\phi|^2.
\end{align*}
Note that
\begin{align*}
&\tilde{r}^{-1}\tilde{\Om}_{ij}(r^{-3}\om_j\tilde{\om}_i)=-2r^{-4}(1-2\tau^2)-2\tau \tilde{r}^{-1}r^{-3},\\
&\tilde{r}^{-2}r^4\tilde{\Lb}(r^{-3}\tilde{r}^2\tau)=4\tau^2-1-2r\tilde{r}^{-1}\tau.
\end{align*}
Thus the coefficients of $|\phi|^2$ in the last line in the previous equation verify
\begin{align*}
&(-\f12 r\tilde{\Lb}(r\chi)+v_+^\ga+u_+^\ga)-\tilde{r}^{-2}r^4\tilde{\Lb}(r^{-3}\tau\tilde{r}^2(v_+^\ga+u_+^\ga)) +r^4 \tilde{r}^{-1} \tilde{\Om}_{ij}(r^{-3}(v_+^\ga+u_+^\ga) \om_j\tilde{\om}_i) \\
&=-r(\pa_t-\tilde{\om}\cdot \nabla)(v_+^\ga-u_+^\ga)-r\tau (\pa_t-\tilde{\om}\cdot \nabla)(u_+^\ga+v_+^\ga)+r(\pa_r-\tau \tilde{\om}\cdot \nabla)(u_+^\ga+v_+^\ga)\\
&\quad +(u_+^\ga+v_+^\ga)\left(1-(4\tau^2-1-2r\tilde{r}^{-1}\tau)-2(1-2\tau^2)+2\tau \tilde{r}^{-1}r\right)\\
&=r(\pa_t+\pa_r)u_+^\ga+r(\pa_r-\pa_t)v_+^\ga-\tau r(\pa_t+\pa_r)u_+^\ga-\tau r(\pa_t-\pa_r)v_+^\ga=0.
\end{align*}
The above computations imply that the lower order terms can be written as a divergence form and hence can be estimated by using integration by parts:
\begin{align*}
  &\int_{\mathcal{N}^{-}(q)}\big(-r^2\tilde{r}^{-1} \tilde{\Om}_{ij}(r^{-3}(v_+^\ga+u_+^\ga) \om_j\tilde{\om}_i |r\phi|^2)+\tilde{r}^{-2}r^2\tilde{\Lb}(r^{-1}\tau\tilde{r}^2(v_+^\ga+u_+^\ga) |\phi|^2)\big)r^{-2}\tilde{r}^2 d\tilde{u}d\tilde{\om}\\
  &= \int_{\S_{(0, x_0)}(t_0)}r^{-1}\tau \tilde{r}^2 (u_+^\ga+v_+^\ga)|\phi|^2d\tilde{\om}.
\end{align*}
This term is an integral on a sphere in the initial hypersurface and cancels the one arising from the boundary integral on $\B_{(0, x_0)}(t_0)$.
Keeping the potential part and discarding the quadratic terms which are nonnegative, we therefore derive that
\begin{equation*}
\begin{split}
&\frac{2}{p+1}\int_{\mathcal{N}^{-}(q)}((1+\tau)v_+^\ga+(1-\tau)u_+^\ga ) |\phi|^{p+1} \tilde{r}^2d\tilde{u}d\tilde{\om}\\
&\leq -\int_{\mathcal{N}^{-}(q)}i_{J^{X,\chi}[\phi]}d\vol+ \int_{\S_{(0, x_0)}(t_0)}r^{-1}\tau \tilde{r}^2 (u_+^\ga+v_+^\ga)|\phi|^2d\tilde{\om}.
\end{split}
\end{equation*}
Combining this estimate with \eqref{eq:PWE:ex:bxt0} and by using the uniform spacetime bound of Proposition \ref{prop:spacetime:bd}, we then derive that
\begin{align*}
  \frac{2}{p+1}\int_{\mathcal{N}^{-}(q)}((1+\tau)v_+^\ga+(1-\tau)u_+^\ga ) |\phi|^{p+1} 
 & \leq \iint_{\mathcal{J}^{-}(q)}\left|\frac{p-1}{p+1}\chi-\frac{(v_+^{\ga-2}v+u_+^{\ga-2}u)\ga}{p+1}\right||\phi|^{p+1}\\
  &\quad +\int_{\pa\mathcal{J}^{-}(q)}i_{J^{X, \chi}[\phi]}d\vol\\
  &\leq C \mathcal{E}_{0, \ga+\ep} 
\end{align*}
for some constant $C$ depending only on $\ga$, $p$ and $0<\ep<p-1-\ga$. Since $v_+\geq u_+$, the proposition then follows by letting $0<\ep<\ga_0-\ga$.  
\end{proof}

\section{Decay estimates for the solution in the exterior region}

In this section, we use of the weighted energy flux bound derived in the previous section to show the pointwise decay estimates for the solution when $p>2$ in the exterior region $\{t+2\leq |x|\}$.
We need the following integration lemma.
 \begin{Lem}
\label{lem:integration:ex:ab}
  Assume $1<\ga<2$ and $\a$, $\b$ nonnegative such that $\b+\a\ga>2$, $\alpha\neq 1$. Fix $q=(t_0, x_0)$ in the exterior region $\{t+2\leq |x|\}$. For the $2$-sphere $\S_{(t_0-\tilde{r}, x_0)}(\tilde{r})$ on the backward light cone $\mathcal{N}^{-}(q)$, we have
  \begin{equation}
    \label{eq:integration:ex:ab}
    \begin{split}
    &\int_{\S_{(t_0-\tilde{r}, x_0)}(\tilde{r})} ((1+\tau)r^{\ga}+(r_0-t_0)^\ga)^{-\a}r^{-\b}d\tilde{\om}\\
    &\leq C (r_0-\tilde{r})^{2-\b-\ga+\ep} r_0^{-2}\left((r_0-\tilde{r})^{(1-\a)\ga}+(r_0-t_0)^{(1-\a)\ga}\right)
    \end{split}
  \end{equation}
  for some constant $C$ depending only on $\ep$, $\ga$, $\a$ and $\b$.
  Here $\tau=\om\cdot \tilde{\om}$, $r_0=|x_0|$ and $0\leq \tilde{r}\leq t_0<r_0$.
 \end{Lem}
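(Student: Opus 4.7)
The strategy is to convert the spherical integral to a one-dimensional radial integral and estimate it case by case. Parametrize $\S_{(t_0-\tilde{r},x_0)}(\tilde{r})$ by $\tilde{\omega}$ and set $\sigma=\tilde{\omega}\cdot\hat{x}_0$ with $\hat{x}_0 = x_0/r_0$. From the identity $r\omega = x_0+\tilde{r}\tilde{\omega}$ one obtains $r^2=r_0^2+2r_0\tilde{r}\sigma+\tilde{r}^2$ and $r\tau=r_0\sigma+\tilde{r}$, which yield the two key formulas
\begin{align*}
 r\,dr=r_0\tilde{r}\,d\sigma,\qquad (1+\tau)\,r=\frac{(r-(r_0-\tilde{r}))(r+r_0+\tilde{r})}{2\tilde{r}}.
\end{align*}
Since $q$ is in the exterior and $\tilde{r}\leq t_0$, we have $a:=r_0-t_0\geq 2$ and $r\in[r_0-\tilde{r},\,r_0+\tilde{r}]$ with $r+r_0+\tilde{r}\approx r_0$. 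Writing $s=r-(r_0-\tilde{r})\in[0,2\tilde{r}]$, this gives $(1+\tau)r^\gamma\approx s r_0 r^{\gamma-1}/\tilde{r}$, and using $\int d\tilde{\omega}=2\pi\int d\sigma$ the left-hand side of \eqref{eq:integration:ex:ab} reduces to
\begin{align*}
 \frac{2\pi}{r_0\tilde{r}}\int_{r_0-\tilde{r}}^{r_0+\tilde{r}}\bigl((1+\tau)r^\gamma+a^\gamma\bigr)^{-\alpha} r^{-\beta+1}\,dr.
\end{align*}

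The remaining one-dimensional integral is estimated by splitting $s$ at the threshold where $(1+\tau)r^\gamma$ and $a^\gamma$ balance, using the elementary inequality $(X+Y)^{-\alpha}\approx\min(X^{-\alpha},Y^{-\alpha})$. I would separate two regimes of $\tilde{r}$. When $\tilde{r}\leq r_0/2$, $r\approx r_0\approx r_0-\tilde{r}$ holds throughout, and the substitution $u=sr_0^\gamma/\tilde{r}$ converts the integral into $\int_0^{2r_0^\gamma}(u+a^\gamma)^{-\alpha}\,du$, producing $\lesssim r_0^{-\beta-\gamma}(r_0^{(1-\alpha)\gamma}+a^{(1-\alpha)\gamma})$ for $\alpha\ne 1$ and $\lesssim r_0^{-\beta-\gamma}\ln(r_0/a)$ for $\alpha=1$. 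When $\tilde{r}>r_0/2$, $\tilde{r}\approx r_0$ and the range of $s$ splits naturally at $s=b:=r_0-\tilde{r}$: for $s\leq b$ one has $r\approx b$ and a further split at $s^*\approx a^\gamma b^{1-\gamma}$ handles the neighborhood of $\tau=-1$, while for $s\geq b$ one has $r\approx s$ and the hypothesis $\beta+\alpha\gamma>2$ renders the tail integral convergent. Each piece contributes $\lesssim r_0^{-2}b^{2-\beta-\gamma}(b^{(1-\alpha)\gamma}+a^{(1-\alpha)\gamma})$.

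Combining the two regimes, and in the borderline case $\alpha=1$ absorbing $\ln(r_0/a)\lesssim r_0^\epsilon$ (possible since $a\geq 2$), then yields the stated estimate. The principal obstacle is the sheer number of sub-cases: the integrand's behavior depends on the relative sizes of $\tilde{r}$, $b=r_0-\tilde{r}$ and $a$, interacting with the three regimes $\alpha<1$, $\alpha=1$, $\alpha>1$, and one must verify in each sub-case that the contribution fits into the common form $b^{2-\beta-\gamma+\epsilon}r_0^{-2}(b^{(1-\alpha)\gamma}+a^{(1-\alpha)\gamma})$. The hypothesis $\beta+\alpha\gamma>2$ is exactly what is needed to integrate the tail where $(1+\tau)r^\gamma$ dominates, and the $\epsilon$-loss at $\alpha=1$ reflects an unavoidable logarithmic divergence at the intermediate scale.
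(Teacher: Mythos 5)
Your plan is correct and follows essentially the same route as the paper: both reduce the spherical integral to a one-dimensional integral via the axial symmetry, split it according to where $(1+\tau)r^{\ga}$ dominates $(r_0-t_0)^{\ga}$ and where $r$ leaves the scale $r_0-\tilde{r}$, use $\b+\a\ga>2$ to control the tail, and absorb the logarithm arising at $\a=1$ into the $\ep$-loss. Your substitution to the variable $r$ via $r\,dr=r_0\tilde{r}\,d\sigma$ together with the factorization $(1+\tau)r=\frac{(r-(r_0-\tilde{r}))(r+r_0+\tilde{r})}{2\tilde{r}}$ is just a clean reparametrization of the paper's computation in the angular variable $s=-\om_0\cdot\tilde{\om}$, with the same split points.
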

 \begin{proof}
Denote $s=-\om_0\cdot \tilde{\om}$ and $\om_0=r_0^{-1}x_0$. By definition, we have
  \begin{align*}
    &r^2=|x_0+\tilde{x}|^2=\tilde{r}^2+r_0^2-2r_0\tilde{r}s=(\tilde{r}-r_0s)^2+(1-s^2)r_0^2,\\
    &(1+\tau)r=r+r\om\cdot \tilde{\om}=r+(\tilde{x}+x_0)\cdot \tilde{\om}=r+\tilde{r}-r_0s.
  \end{align*}
We can write the integral as
  \begin{align*}
    &\int_{\S_{(t_0-\tilde{r}, x_0)}(\tilde{r})} ((1+\tau)r^{\ga}+(r_0-t_0)^\ga)^{-\a}r^{-\b}d\tilde{\om} =2\pi\int_{-1}^1 r^{-\b}(r^{\ga-1}(r+\tilde{r}-r_0s)+(r_0-t_0)^{\ga})^{-\a}ds.
  \end{align*}
 Since $r_0\geq t_0+2$ and $\tilde{r}\leq t_0$, the estimate \eqref{eq:integration:ex:ab} trivially holds when $t_0+r_0\leq 20$. Moreover for the case when $t_0\leq \frac{9}{10}r_0$, note that $r\geq r_0-t_0\geq \frac{1}{10}r_0$. We thus can bound that
   \begin{align*}
    \int_{\S_{(t_0-\tilde{r}, x_0)}(\tilde{r})} ((1+\tau)r^{\ga}+(r_0-t_0)^\ga)^{-\a}r^{-\b}d\tilde{\om}
    \leq 4\pi 10^{\b+\a\ga} r_0^{-\b-\a\ga}.
  \end{align*}
  Hence it remains to prove the Lemma when $t_0+r_0\geq 20$ and $\frac{9}{10}r_0 \leq t_0\leq r_0$.

For the integral on $s\leq 0$, we bound that
  \begin{align*}
    r\geq r_0,\quad r+\tilde{r}-r_0s\geq r+\tilde{r}\geq r_0.
  \end{align*}
  Thus
  \begin{align*}
    \int_{-1}^0 r^{-\b}(r^{\ga-1}(r+\tilde{r}-r_0s)+(r_0-t_0)^{\ga})^{-\a}ds\leq r_0^{-\b-\a\ga}.
  \end{align*}
  Define $s_0=1-(1-\tilde{r}r_0^{-1})^2$. On the interval $[0, s_0]$, note that
  \begin{align*}
    \sqrt{1-s} \ r_0\geq r_0-\tilde{r}.
  \end{align*}
  Therefore, we can show that
  \begin{align*}
    \tilde{r}-r_0s \leq r_0(1-s)\leq r_0\sqrt{1-s},\quad r_0s-\tilde{r}\leq r_0-\tilde{r}\leq r_0\sqrt{1-s}
  \end{align*}
  as $\tilde{r}\leq t_0<r_0$. This in particular implies that
  \begin{align*}
    \sqrt{1-s}\ r_0\leq r\leq \sqrt{2(1-s)}\ r_0,\quad \sqrt{(\tilde{r}-r_0s)^2+(1-s^2)r_0^2}+\tilde{r}-r_0s\geq \frac{1}{3}\sqrt{1-s}r_0.
  \end{align*}
  Here the second inequality follows from the inequality
  \[
  \sqrt{a^2+b^2}+b\geq (\sqrt{2}-1)|a|,\quad \forall |b|\leq |a|
  \]
  together with the bound $|\tilde{r}-r_0s|\leq \sqrt{1-s}r_0\leq \sqrt{1-s^2}r_0$.

  Therefore on the interval $[0, s_0]$, we can estimate that
  \begin{align*}
    \int_{0}^{s_0} r^{-\b}(r^{\ga-1}(r+\tilde{r}-r_0s)+(r_0-t_0)^{\ga})^{-\b} ds
    &\leq 3^{\a} \int_{0}^{s_0}(\sqrt{1-s} r_0)^{-\b-\a\ga} ds\\
    &\leq \frac{2 \times 3^{\a} }{\b+\a\ga-2}r_0^{-2}(r_0-\tilde{r})^{2-\b-\a\ga}.
  \end{align*}
  Here we used the assumption $\b+\a\ga> 2$.

  Finally on the interval $[s_0, 1]$, notice that
  \begin{align*}
    2r\geq r_0s-\tilde{r}+\sqrt{1-s}r_0=r_0-\tilde{r}+(\sqrt{1-s}-(1-s))r_0\geq r_0-\tilde{r}.
  \end{align*}
  Moreover
  \begin{align*}
    r+\tilde{r}-r_0s=\frac{(1-s^2)r_0^2}{r+r_0s-\tilde{r}}\geq\frac{(1-s)r_0^2}{4(r_0-\tilde{r})}.
  \end{align*}
  Therefore we can estimate that
  \begin{align*}
    &\int_{s_0}^{1} r^{-\b}(r^{\ga-1}(r+\tilde{r}-r_0s)+(r_0-t_0)^{\ga})^{-\a} ds\\
    &\leq 2^{\b} \int_{s_0}^{1}\big((r_0-t_0)^{\ga}+2^{\ga-3}(r_0-\tilde{r})^{\ga-2}(1-s)r_0^2\big)^{-\a}(r_0-\tilde{r})^{-\b}ds\\
    &= 2^{\b+3-\ga}(\a-1)^{-1}(r_0-\tilde{r})^{2-\b-\ga} r_0^{-2}\left((r_0-t_0)^{(1-\a)\ga}-((r_0-t_0)^{\ga}+2^{\ga-3}(r_0-\tilde{r})^{\ga})^{1-\a}\right)\\
    &\leq C  (r_0-\tilde{r})^{2-\b-\ga } r_0^{-2}\left((r_0-\tilde{r})^{(1-\a)\ga}+(r_0-t_0)^{(1-\a)\ga}\right)
  \end{align*}
  for some constant $C $ depending only on $\a$, $\b$ and $\ga$. Here we note that $\alpha\neq 1$.
 Since
  \begin{align*}
    r_0^{-\a\ga-\b}\leq (r_0-\tilde{r})^{2-
    \b-\ga } r_0^{-2}\left((r_0-\tilde{r})^{(1-\a)\ga}+(r_0-t_0)^{(1-\a)\ga}\right)
  \end{align*}
in view of the assumption $\b+\a\ga> 2$, we thus conclude the Lemma.
\end{proof}
We now use the above integration lemma as well as the weighted energy estimate of Proposition \ref{prop:EF:cone:NW:3d} to study the asymptotic behavior of the solution in the pointwise sense in the exterior region $\{t+2\leq |x|\}$. First recall the representation formula for wave equations
\begin{equation}
\label{eq:rep4phi:ex}
\begin{split}
4\pi\phi(t_0, x_0)&=\int_{\tilde{\om}}t_0  \phi_1(x_0+t_0\tilde{\om})d\tilde{\om}+\pa_{t_0}\big(\int_{\tilde{\om}}t_0  \phi_0(x_0+t_0\tilde{\om})d\tilde{\om}   \big)-\int_{\mathcal{N}^{-}(q)}|\phi|^{p-1} \phi \ \tilde{r} d\tilde{r}d\tilde{\om}
\end{split}
\end{equation}
for any point $q=(t_0, x_0)\in\mathbb{R}^{1+3}$. The first two terms are the associated linear evolution part which has been well understood. The nonlinearity will be controlled by using the weighted energy estimate of Proposition \ref{prop:EF:cone:NW:3d} together with the above integration lemma. We divide into two cases according to the range of the power $p$.

First note that the backward light cone $\mathcal{N}^{-}(q)$ locates in the region $\{t-|x|\leq t_0- r_0\}$. Hence when $t_0+2\leq r_0=|x_0|$, we always have 
\begin{align*}
u_+=\sqrt{1+u^2}\geq \sqrt{1+\frac{(t_0-r_0)^2}{4}}\geq \frac{r_0-t_0+1}{4},\quad v_+\geq \frac{r}{2}.
\end{align*}
In view of Proposition \ref{prop:EF:cone:NW:3d}, we hence conclude that 
\begin{equation}
\label{eq:Eflux:ex:EF:rW:a}
\begin{split}
&\int_{\mathcal{N}^{-}(q)}((1+\tau)r^{\ga}+(r_0-t_0)^{\ga})|\phi|^{p+1} d\si 
\leq C \mathcal{E}_{0, \ga} 
\end{split}
\end{equation}
for all $0\leq \ga\leq \min\{p-1, 2\}$ and $t_0+2\leq r_0=|x_0|$.

For the case when 
$\frac{1+\sqrt{17}}{2}<p<5$, we can choose $\ga_0$ such that
\[
\max\{\frac{4}{p-1}-1, 1\}<\ga_0<\min\{p-1, 2\}.
\]
In view of the representation formula \eqref{eq:rep4phi:ex}, the linear evolution part decays as follows
\begin{align}
\label{eq:lindecay:ex}
  |\int_{\tilde{\om}}t_0  \phi_1(x_0+t_0\tilde{\om})d\tilde{\om}+\pa_{t_0}\big(\int_{\tilde{\om}}t_0  \phi_0(x_0+t_0\tilde{\om})d\tilde{\om}   \big)|
  &\les r_0^{-1} (r_0-t_0)^{-\frac{\ga_0-1}{2}}\sqrt{\mathcal{E}_{1, \ga_0}  }
\end{align}
by using the standard vector field method. Here $\ga_0>1$ and $r_0=|x_0|\geq t_0+2$.

Define nonnegative small constant $\ep_p$ such that 
\begin{equation*}
\ep_p=
\begin{cases}
0,\quad p<3;\\
\min\{\frac{5-p}{10}, \frac{\ga_0-1}{4}\},\quad 3\leq p<5.
\end{cases}
\end{equation*}
For the nonlinear part, apply the integration lemma \ref{lem:integration:ex:ab} with $\a=\frac{p-1+\ep_p}{2-\ep_p}$, $\b=\frac{(p+1)(1-\ep_p)}{2-\ep_p}$. In view of the above weighted energy estimate \eqref{eq:Eflux:ex:EF:rW:a}, we can bound that
\begin{equation}
\label{eq:bdN:largep}
\begin{split}
  &|\int_{\mathcal{N}^{-}(q)}\Box \phi \ \tilde{r} d\tilde{r}d\tilde{\om}|\\
  &\leq \left(\int_{\mathcal{N}^{-}(q)}((1+\tau)r^{\ga_0}+(r_0-t_0)^{\ga_0})|\phi|^{p+1}\ \tilde{r}^{2} d\tilde{r}d\tilde{\om}\right)^{\frac{p-1+\ep_p}{p+1}}\\
  &\quad \cdot \left(\int_{\mathcal{N}^{-}(q)}((1+\tau)r^{\ga_0}+(r_0-t_0)^{\ga_0})^{-\frac{p-1+\ep_p}{2-\ep_p}}|\phi|^{\frac{(1-\ep_p)(p+1)}{2-\ep_p}}\ \tilde{r}^{\frac{3-p-2\ep_p}{2-\ep_p}} d\tilde{r}d\tilde{\om}\right)^{\frac{2-\ep_p}{p+1}}\\ 
  &\les \left(\int_{0}^{t_0}(\sup\limits_{x}|r\phi|^{\b}) (r_0-\tilde{r})^{2-\b-\ga_0} r_0^{-2}\left((r_0-\tilde{r})^{(1-\a)\ga_0}+(r_0-t_0)^{( 1-\a)\ga_0}\right)\tilde{r}^{\frac{3-p-2\ep_p}{2-\ep_p}} d\tilde{r} \right)^{\frac{2-\ep_p}{p+1}}.
  \end{split}
\end{equation}
Here and in the sequel of this section, the implicit constant in $\les$ may also rely on the zeroth order weighted energy $\mathcal{E}_{0, \ga_0} $.
Define the function
\begin{equation*}
\mathcal{M}(t_0)=\sup\limits_{|x|\geq t_0+2} |u_+^{\frac{\ga_0-1}{2}}r\phi|^{\frac{p+1}{2-\ep_p}}
\end{equation*}
and
\begin{equation*}
  f(t_0, r_0, \tilde{r})=r_0^{\frac{p+1}{2-\ep_p}}(r_0-\tilde{r})^{2-\b-\ga_0} r_0^{-2}\left((r_0-\tilde{r})^{(1-\a)\ga_0}+(r_0-t_0)^{(1-\a)\ga_0}\right)\tilde{r}^{\frac{3-p-2\ep_p}{2-\ep_p}} .
\end{equation*}
Here $u_+=1+\f12|t-r|$. Since on the cone $\mathcal{N}^{-}(q)$, we always have $u_+\geq \f12 (r_0-t_0)$. 
Then the previous inequality leads to
\begin{align}
\label{eq:001}
  \mathcal{M}(t_0)\les (\mathcal{E}_{1, \ga_0}[\phi] )^{\frac{p+1}{2(2-\ep_p)}}+\int_{0}^{t_0}\mathcal{M}^{1-\ep_p}(t_0-\tilde{r})f(t_0, r_0, \tilde{r}) d\tilde{r}.
\end{align}
When $p<3$, by definition we have $\ep_p=0$. In view of the assumption $(p-1)(\ga_0+1)>4$, we also have
$$\frac{5-p}{2} + \frac{1-p}{2}\ga_0<  0.$$ 
Since $\tilde{r}\leq t_0\leq r_0-2$, we thus can bound that
\begin{align*}
   f(t_0, r_0, \tilde{r})\les (r_0-\tilde{r})^{\frac{3-p}{2}-\ga_0 } r_0^{\frac{p-3}{2}} (r_0-\tilde{r})^{\frac{3-p}{2}\ga_0} \tilde{r}^{\frac{3-p}{2}}\leq  (2+t_0-\tilde{r})^{\frac{5-p}{2}+\frac{1-p}{2}\ga_0 -1}  .
\end{align*}
By using Gronwall's inequality, we conclude that 
\begin{align*}
\mathcal{M}(t_0)\les (\mathcal{E}_{1, \ga_0}  )^{\frac{p+1}{4}},\quad \forall t_0\geq 0. 
\end{align*}
When $p\geq 3$, for the range when $\tilde{r}\leq \frac{t_0}{2}+1$, we have 
\begin{align*}
   f(t_0, r_0, \tilde{r})&\leq 2(r_0-\tilde{r})^{\frac{3-p+(p-1)\ep_p}{2-\ep_p}-\ga_0}   (r_0-t_0)^{\frac{3-p-2\ep_p}{2-\ep_p}\ga_0} (r_0^{-1}\tilde{r})^{\frac{3-p-2\ep_p}{2-\ep_p}} \\
   & \les (2+t_0 )^{\frac{(p+1)\ep_p}{2-\ep_p}-\ga_0} \tilde{r}^{\frac{3-p-2\ep_p}{2-\ep_p}}.
\end{align*}
When $\frac{t_0}{2}+1\leq \tilde{r}\leq t_0$, we can bound that 
\begin{align*}
   f(t_0, r_0, \tilde{r})&\les (r_0-\tilde{r})^{\frac{3-p+(p-1)\ep_p}{2-\ep_p}-\ga_0}    ((r_0-t_0)r_0^{-1} t_0)^{\frac{3-p-2\ep_p}{2-\ep_p}} \\
   & \les   (2+t_0-\tilde{r})^{ \frac{3-p+(p-1)\ep_p}{2-\ep_p}-\ga_0}.
\end{align*}
By the choice of $\ep_0$ when $p\geq 3$, we note that 
\begin{align*}
 \frac{3-p+(p-1)\ep_p}{2-\ep_p}-\ga_0+1<4\ep_0+1-\ga_0\leq 0. 
\end{align*}
In particular, we always have 
\begin{align*}
  \int_0^{t_0}f(t_0,r_0, \tilde{r})d\tilde{r}  \les 1. 
\end{align*}
 As $0<\ep_p<1$ when $p\geq 3$, in view of inequality \eqref{eq:001} and the uniform integrability of the function $f(t_0, r_0, \tilde{r})$, by using a bootstrap argument,  we conclude that
\begin{align*}
   \mathcal{M}(t_0)\les (\mathcal{E}_{1, \ga_0}  )^{\frac{p+1}{2(2-\ep_p)}}.
\end{align*}
The decay estimate for $\phi$ then follows from the definition of $\mathcal{M}(t_0)$.

Next we investigate the asymptotic decay for the solution in the exterior region when $2<p\leq \frac{1+\sqrt{17}}{2}$. For this case we assume that 
$1<\ga_0<p-1\leq \frac{\sqrt{17}-1}{2}$. In particular we have 
$$p\ga_0>2, \quad (p-1)(\ga_0+1)>2,\quad \frac{(p-1)\ga_0}{5-p}<\frac{(p-1)^2}{5-p}<1.$$
In view of the representation formula \eqref{eq:rep4phi:ex}, the linear part verifies the same decay estimate as \eqref{eq:lindecay:ex} since $1<\ga_0<2$. 
 To control the nonlinear term, we split the integral on the backward light cone $\mathcal{N}^{-}(q)$ into two parts: the first part is restricted to time interval $[\f12 t_0^{\frac{(p-1)\ga_0}{5-p}}, t_0]$, on which we use the same argument as above and the second part can be directly estimated by the weighted energy flux. More precisely, for the first part, we estimate that
\begin{align*}
  &|\int_{\mathcal{N}^{-}(q)\cap\{\tilde{r}\geq \f12 t_0^{\frac{p-1}{5-p}\ga_0}\}}\Box \phi \ \tilde{r} d\tilde{r}d\tilde{\om}|\\
  &\les \left(\int_{\mathcal{N}^{-}(q)}((1+\tau)r^{\ga_0}+(r_0-t_0)^{\ga_0})|\phi|^{p+1}\ \tilde{r}^{2} d\tilde{r}d\tilde{\om}\right)^{\frac{p}{p+1}}\\
  &\quad \cdot \left(\int_{\mathcal{N}^{-}(q)\cap\{\tilde{r}\geq \f12 t_0^{\frac{p-1}{5-p}\ga_0}\}}((1+\tau)r^{\ga_0}+(r_0-t_0)^{\ga_0})^{-p} \tilde{r}^{1-p} d\tilde{r}d\tilde{\om}\right)^{\frac{1}{p+1}}\\
  &\les  \left(\int_{\f12 t_0 ^{\frac{p-1}{5-p}\ga_0}}^{t_0}  (r_0-\tilde{r})^{2-\ga_0}   r_0^{-2}(r_0-t_0)^{(1-p)\ga_0} \tilde{r}^{1-p} d\tilde{r} \right)^{\frac{1}{p+1}}\\
  &\les r_0^{-\frac{3+(p-2)^2}{(p+1)(5-p)}\ga_0}(r_0-t_0)^{\frac{(1-p)\ga_0}{p+1}}.
\end{align*}
Here we used Lemma \ref{lem:integration:ex:ab} together with the weighted energy decay estimate \eqref{eq:Eflux:ex:EF:rW:a}. For the integral on $[0, \f12 t_0^{\frac{(p-1)\ga_0}{5-p}}]$, we estimate the nonlinearity the same way as in
\eqref{eq:bdN:largep}.  We therefore can derive that
\begin{align*}
  |\int_{\mathcal{N}^{-}(q) }\Box \phi \ \tilde{r} d\tilde{r}d\tilde{\om}|
  \les  &r_0^{-\frac{3+(p-2)^2}{(p+1)(5-p)}\ga_0}(r_0-t_0)^{\frac{(1-p)\ga_0}{p+1}}\\
 &+ \left(\int_{0}^{ \f12 t_0^{\frac{(p-1)\ga_0}{5-p}}}(\sup\limits_{x}|r\phi|^{\frac{p+1}{2}}) (r_0-\tilde{r})^{1-\frac{(p-1)(\ga_0+1)}{2}} r_0^{-2} \tilde{r}^{\frac{3-p}{2}} d\tilde{r} \right)^{\frac{2}{p+1}}.
\end{align*}
Here we note that $\ep_p=0$ as $p<3$ and $r_0-\tilde{r}\geq r_0-t_0$.
  Since $$\tilde{r}\leq \f12 t_0^{\frac{(p-1)\ga_0}{5-p}}\leq \max\{\f12 t_0, 1\},\quad t_0\leq r_0-2,$$
   we in particular have that $r_0\les r$, $r_0\les r_0-\tilde{r}$. Define
\begin{align*}
  \mathcal{M}_1(t)=\sup\limits_{|x|\geq t+2}( |\phi(t, x)| |x|^{\frac{3+(p-2)^2}{(p+1)(5-p)}\ga_0 }(|x|-t)^{\frac{(p-1)\ga_0}{p+1}})^{\frac{p+1}{2}}.
\end{align*}
From the representation formula \eqref{eq:rep4phi:ex} and the linear decay estimate \eqref{eq:lindecay:ex}, we then derive that
\begin{align*}
  \mathcal{M}_1(t_0)&\les 1+\int_0^{\f12 t_0^{\frac{(p-1)\ga_0}{5-p}}}\mathcal{M}_1(t_0-\tilde{r}) r_0^{\frac{p+1}{2}+1-\frac{(p-1)(\ga_0+1)}{2}-2}\tilde{r}^{\frac{3-p}{2}}d\tilde{r}\\
  & \les  1+\int_0^{\f12 t_0^{\frac{(p-1)\ga_0}{5-p}}}\mathcal{M}_1(t_0-\tilde{r}) (2+t_0)^{ -\frac{(p-1)\ga_0 }{2} } \tilde{r}^{\frac{3-p}{2}}d\tilde{r}.
\end{align*}
Here we note that
\begin{align*}
  \frac{3+(p-2)^2}{(p+1)(5-p)}\ga_0\leq 1,\quad \frac{3+(p-2)^2}{(p+1)(5-p)}\ga_0+\frac{(p-1)\ga_0}{p+1}\leq \frac{\ga_0+1}{2}.
\end{align*}
By using Gronwall's inequality, we conclude that
\begin{align*}
  \mathcal{M}_1(t_0)\les 1.
\end{align*}
The pointwise decay estimate for $\phi$ for the case when $2<p\leq \frac{1+\sqrt{17}}{2}$ then follows.
We hence finished the proof for Theorem \ref{thm:main} in the exterior region.

\bigskip

The method for proving the decay estimates in the interior region is similar to the above argument for deriving the decay estimates for the solution in the exterior region after conformal transformation. The nonlinearity will be controlled by the weighted energy flux through backward light cones. To use Gronwall's inequality, one needs first bound the linear evolution with prescribed data, which, in the interior region, will be the data on the hyperboloid $\Hy$. We rely on the representation formula together with the uniform weighted energy flux bound through backward light cones.

Define the region enclosed by the hyperboloid $\Hy$
\begin{align*}
\mathbf{D}:=\left\{(t, x)|(t^*)^2-|x|^2\geq (R^*)^{-1} t^*\right\},\quad \mathbf{D}^+=\mathbf{D}\cap\{t\geq 0\}, \quad R^*=\frac{3}{5},\quad t^*=t+3.
\end{align*}
Let $\phi^{lin}_{H}$ be the linear evolution in $\mathbf{D}$, that is,
\begin{align*}
  \Box \phi^{lin}_{H}=|\phi|^{p-1}\phi (1-\mathbf{1}_{\mathbf{D}^+}),\quad \phi^{lin}_H(0, x)=\phi_0,\quad \pa_t \phi_H^{lin}(0, x)=\phi_1,
\end{align*}
where $\mathbf{1}_{\mathbf{D}^+}$ stands for the characteristic function of the set $\mathbf{D}^+$. We see that $\phi^{lin}_H$ coincides with $\phi$ in the region $(\mathbb{R}^{3}\backslash\mathbf{D})\cap\{t\geq 0\}$. The decay estimates for $\phi^{lin}$ rely on the following integration Lemma. 

\def\Nd{\mathcal{N}^{-}(q)\backslash \mathbf{D}}

\begin{Lem}
\label{Lem:NLW:def:3d:ex:ID:H}
Let $q=(t_0, x_0)\in\mathbb{R}^{1+3}$ such that $t_0 \geq \max\{|x_0|+10, 20\}$. Assume that $l$, $\a$, $0\leq \b<\min\{1, \a\}$ are nonnegative constants such that $l\ga+\a+\b>2.$
Then we have 
\begin{equation}
\label{eq:NLW:def:3d:ex:ID:H} 
\begin{split}
&\int_{\mathcal{N}^{-}(q) \backslash \mathbf{D} } ((1+\tau)v_+^{\ga}+u_+^\ga)^{-l}v_+^{-\a} u_+^{-\b} \tilde{r}^{k}d\tilde{r}d\tilde{\om}\\
&\les  t_0^{k-2} (u_0^{3-l\ga-\a-\b}+t_0^{3-l\ga-\a-\b}) +t_0^{-l\ga-\a}u_0^{-\b}(t_0^{k+1}+u_0^{k+1}+u_0^{1-l}t_0^{k+l}). 
\end{split}
\end{equation}
Here the tilde components are the associated quantities under the coordinates centered at $q$ and $\tau=\om \cdot \tilde{\om}$, $u_0=\frac{t_0-r_0}{2}$, $v_0=\frac{t_0+r_0}{2}$, $r_0=|x_0|$.
\end{Lem}
\begin{proof}
Denote $s=-\om_0\cdot \tilde{\om}$ with $\om_0=r_0^{-1}x_0$, $r_0=|x_0|$. Recall that
  \begin{align*}
    &r^2=|x|^2=(\tilde{r}-r_0s)^2+(1-s^2)r_0^2,\quad r\tau=\tilde{r}-r_0s,\quad u_0=\frac{t_0-r_0}{2},\quad v_0=\frac{t_0+r_0}{2}.
  \end{align*}
  By the assumption, we in particular have $u_0\geq 10$.

On $\Nd$, $\tilde{r}$ and $s$ have to verify the relation
\begin{align*}
  r^2 \geq (t+3)^2 -\frac{5}{3}(t+3) \geq t^2+4,
\end{align*}
that is,
\begin{align*}
 s\leq s_* =\frac{r_0^2-t_0^2+2t_0\tilde{r} -4}{2r_0\tilde{r}}.
\end{align*}
As $-1\leq s\leq 1$, to make the set $\Nd$ non-empty, it in particular requires that
 $$\tilde{r}\geq \frac{t_0^2-r_0^2+4}{2(t_0+r_0) }= u_0 +v_0^{-1}.$$
Under our coordinates system, $\tilde{r}\leq t_0$. We discuss several cases depending on the range of $s$ and $\tilde{r}$. 

For the case when $\tilde{r}\geq v_0-0.1 u_0$, it can be showed that
\begin{align*}
s_*\geq 1,\quad r\tau &=\tilde{r}-r_0s\geq 0,\quad r\geq \frac{1}{2}(\tilde{r}-r_0s+\sqrt{1-s^2}r_0)\geq \frac{1}{10}(u_0+\sqrt{1-s}r_0),\\
u_+v_+ &\sim 1+|r^2-t^2|=1+|r_0^2-t_0^2+2\tilde{r} (t_0-r_0s)|.
\end{align*}
 Therefore we can estimate that
\begin{align*}
 & \int_{\Nd \cap\{\tilde{r}\geq v_0-0.1 u_0\}}((1+\tau)v_+^{\ga}+u_+^{\ga})^{-l} v_+^{-\a+\b}(v_+u_+)^{-\b}\tilde{r}^{k} d\tilde{r}d\tilde{\om}\\
 &\les t_0^{k}\int_{-1}^{1}\int_{v_0 -0. 1 u_0}^{t_0} (u_0+\sqrt{1-s}r_0 )^{-l \ga-\a+\b} (1+r_0^2-t_0^2+2\tilde{r} (t_0-r_0s))^{-\b}d\tilde{r}ds\\
  &\les t_0^{k}\int_{-1}^{1}  (u_0+\sqrt{1-s}r_0 )^{-l\ga -\a+\b} (t_0-r_0s)^{-1} (1+r_0^2-t_0^2+2t_0 (t_0-r_0s))^{1-\b} ds\\
  &\les t_0^{k}\int_{-1}^{1}  (u_0+\sqrt{1-s}r_0 )^{-l\ga -\a+2-\b} (u_0+r_0(1-s))^{-1}  ds .
\end{align*}
For the case when $r_0\leq \f12 t_0$, we trivially have that
\begin{align*}
\int_{-1}^{1}  (u_0+\sqrt{1-s}r_0 )^{-l\ga -\a+2-\b} (u_0+r_0(1-s))^{-1}  ds\les t_0^{-l\ga-\a-\b+1}.
\end{align*}
as $u_0=\f12(t_0-r_0)\geq \frac{1}{4}t_0$. When $r_0\geq \f12 t_0$, we show that
\begin{equation}
\label{eq:It:00}
\begin{split}
&\int_{-1}^{1}  (u_0+\sqrt{1-s}r_0 )^{-l\ga -\a+2-\b} (u_0+r_0(1-s))^{-1}  ds\\
 &\les  \int_{-1}^{1-r_0^{-2}u_0^2} (r_0(1-s))^{-1}(\sqrt{1-s}r_0)^{-l\ga-\a-\b+2}ds+u_0^{-1}\int_{1-r_0^{-2}u_0^2}^{1} u_0^{-l\ga-\a-\b+2}ds\\
&\les r_0^{-2}u_0^{3-l\ga-\a-\b}+ r_0^{-l\ga-\a-\b+1}  \Big[1+(r_0^{-2}u_0^2)^{1-\frac{l\ga+\a+\b}{2}}\Big]\\
&\les t_0^{-2}u_0^{3-l\ga-\a-\b}+t_0^{1-l\ga-\a-\b}.
\end{split}
\end{equation}
We thus conclude that
\begin{align*}
 \int_{\Nd \cap\{\tilde{r}\geq  v_0-0.1 u_0\}}((1+\tau)v_+^{\ga}+u_+^{\ga})^{-l} v_+^{-\a} u_+^{-\b}\tilde{r}^{k} d\tilde{r}d\tilde{\om}\les t_0^{k-2} (u_0^{3-l\ga-\a-\b}+t_0^{3-l\ga-\a-\b}).
\end{align*}
Next we estimate the integral on the region $u_0\leq \tilde{r}\leq v_0-0.1 u_0$. 
For the case when 
\begin{align*}
  r_0^2\leq 4+\frac{1}{2}t_0^2,
\end{align*}
in particular we always have 
\begin{align*}
s_*\leq r_0^{-1}\tilde{r}.
\end{align*}
If $r_0$ is small compared to $t_0$, that is, $r_0\leq \frac{1}{10}t_0$, then
\begin{align*}
  r\geq \tilde{r}-r_0s \geq  u_0-r_0\geq \frac{t_0}{3}.
\end{align*}
Otherwise if $\frac{1}{10}t_0\leq r_0\leq \sqrt{4+\frac{t_0^2}{2}} $, then for all $s\leq s_*$ we can show that
\begin{align*}
  2r\geq \tilde{r}-r_0s+\sqrt{1-s}r_0&\geq \tilde{r}-r_0+\sqrt{\frac{(t_0-r_0)(t_0+r_0-2\tilde{r})+4}{2\tilde{r}r_0}}r_0\\
  &\geq \tilde{r}-r_0+\frac{1}{3}\sqrt{u_0(v_0-\tilde{r})+1}\\
  &\geq \frac{1}{10}t_0,\quad  u_0 \leq \tilde{r}\leq v_0.
\end{align*}
Thus for the case when $ r_0^2\leq 4+\frac{1}{2}t_0^2$, we always have
\begin{align*}
t_0\les r\les v_+,\quad s\leq s_*\leq r_0^{-1}\tilde{r},\quad t_0\les \frac{t_0-r_0}{2}-2\leq \tilde{r}.
\end{align*}
Therefore we can bound that 
\begin{align*}
 &\int_{u_0 }^{v_0-0.1 u_0} \int_{-1}^{  s_* } ((r+\tilde{r}-r_0s) r^{\ga-1}+u_+^{\ga})^{-l}v_+^{-\a+\b}(u_+v_+)^{-\b}\tilde{r}^{k} d\tilde{r}ds\\
 &\les t_0^{-l\ga-\a+k+\b}\int_{u_0 }^{v_0-0.1 u_0} \int_{-1}^{  s_* }     (1+r_0^2-t_0^2+2\tilde{r} (t_0-r_0 s))^{-\b}  d\tilde{r} ds\\
&\les t_0^{-l\ga-\a+k+\b}\int_{u_0 }^{v_0-0.1 u_0}  r_0^{-1}\tilde{r}^{-1}  (1+r_0^2-t_0^2+2\tilde{r} (t_0+r_0))^{1-\b}  d\tilde{r}\\
&\les  t_0^{-l\ga-\a+k+\b} r_0^{-1}t_0^{-1} (t_0+r_0)^{-1} (1+|r_0^2-t_0^2+(v_0-0.1 u_0)(t_0+r_0)|)^{2-\b}\\
&\les  t_0^{-l\ga-\a+k+1-\b} .
\end{align*}
Here note that $ r_0^2 \leq 4+\frac{t_0^2}{2}$ and $\b<\min\{1, \a\}$. Combining with the above estimate on the region $\tilde{r}\geq v_0-0.1 u_0$, we have shown estimate \eqref{eq:NLW:def:3d:ex:ID:H} for the case when $r_0^2\leq 4+\frac{t_0^2}{2}$.

In the sequel, let's assume that $r_0^2>4+\frac{1}{2}t_0^2$ and $u_0\leq \tilde{r}\leq v_0-0.1 u_0$.
On the region when $r_0\leq \tilde{r}\leq v_0-0.1 u_0$, 
note that 
\begin{align*}
r_0\geq \frac{t_0}{2},\quad (\tilde{r}-\frac{t_0}{2})^2\geq (r_0-\frac{t_0}{2})^2\geq \frac{r_0^2}{2}-2-\frac{t_0^2}{4},\quad 1-s\geq 1-s_*\sim r_0^{-2}u_0 (v_0-\tilde{r})\sim  r_0^{-2}u_0^{2}.
\end{align*}
In particular we have 
$$s_*\leq r_0^{-1}\tilde{r},\quad t_0\les r_0,\quad r\sim \tilde{r}-r_0+\sqrt{1-s}r_0\sim u_0+\sqrt{1-s}r_0.$$ 
We thus can show that
\begin{align*}
 &\int_{r_0}^{v_0-0.1 u_0} \int_{-1}^{   s_* } ((r+\tilde{r}-r_0s) r^{\ga-1}+u_+^{\ga})^{-l}v_+^{-\a}u_+^{-\b}\tilde{r}^{k} d\tilde{r}ds\\
 &\les t_0^k\int_{r_0}^{v_0-0.1 u_0} \int_{-1}^{  s_* } ( u_0+\sqrt{1-s}r_0)^{-l\ga-\a+\b} (1+r_0^2-t_0^2+2\tilde{r}(t_0-r_0s))^{-\b}  d\tilde{r}ds  \\
 &\les t_0^{k }     \int_{-1}^{  1 } ( u_0+\sqrt{1-s}r_0)^{-l\ga-\a+\b}  (t_0-r_0s)^{-1} (u_0+\sqrt{1-s}r_0)^{2-2\b}   ds \\
  &  \les t_0^{k }   \int_{-1}^{ 1 } (u_0+r_0(1-s))^{-1} ( u_0+\sqrt{1-s}r_0)^{-l\ga-\a+2-\b} ds \\
  &\les t_0^{k-2}u_0^{3-l\ga-\a-\b}+t_0^{k+1-l\ga-\a-\b}.
\end{align*}
The last step follows from the computation in \eqref{eq:It:00}.

Next we estimate the integral on the region $u_0\leq \tilde{r}\leq  \f12 r_0$. 
 Since $ r_0^2\geq 4+\frac{1}{2}t_0^2$, $t_0\geq 20$ and $s\leq s_*$, in particular we have
 \begin{align*}
 &r\geq t\geq t_0-\tilde{r}\geq t_0-\f12 r_0\geq \frac{1}{10}t_0,\\
 & r+\tilde{r}-r_0s=\frac{(1-s^2)r_0^2}{r+r_0s-\tilde{r}}\sim (1-s)t_0,\\
 &1-s_*
 =\frac{(t_0-r_0)(t_0+r_0-2\tilde{r})+4}{2\tilde{r}r_0}\geq \frac{u_0}{10 \tilde{r}},\\
 & u_+\sim 1+t_0^{-1}(r_0 \tilde{r}(1-s)-2u_0 (v_0-\tilde{r})). 
\end{align*}
Therefore we can estimate that
\begin{align*}
 &\int_{u_0 }^{\f12 r_0} \int_{-1}^{  s_* } ((r+\tilde{r}-r_0s) r^{\ga-1}+u_+^{\ga})^{-l}v_+^{-\a}u_+^{-\b}\tilde{r}^{k} d\tilde{r}ds\\
 &\les  \int_{u_0 }^{\f12 r_0} \int_{-1}^{  1- 10 \tilde{r}^{-1}u_0 } ( (1-s)t_0^{\ga})^{-l}t_0^{-\a} u_0^{-\b}\tilde{r}^{k} d\tilde{r}ds+ \int_{u_0 }^{\f12 r_0} \int_{1- 10\tilde{r}^{-1}u_0}^{  s_* } ( \tilde{r}^{-1}u_0 t_0^{\ga})^{-l}t_0^{-\a}( u_+)^{-\b}\tilde{r}^{k} d\tilde{r}ds\\
 &\les \int_{u_0 }^{\f12 r_0}   ( 1+\tilde{r}^{l-1}u_0^{1-l})t_0^{-l\ga-\a} u_0^{-\b}\tilde{r}^{k} d\tilde{r} + \int_{u_0 }^{\f12 r_0}   ( \tilde{r}^{-1}u_0 t_0^{\ga})^{-l}t_0^{-\a}\tilde{r}^{-1} u_0^{1-\b}\tilde{r}^{k} d\tilde{r}\\
 &\les t_0^{-l\ga-\a}u_0^{-\b}(t_0^{k+1}+u_0^{k+1}+u_0^{1-l}t_0^{k+l}).
\end{align*}
Finally consider the integral on $\f12 r_0\leq \tilde{r}\leq  r_0$ with the condition $r_0^2>4+\frac{1}{2}t_0^2$. When $s\leq \min\{s_*, r_0^{-1}\tilde{r}\}$, we bound that 
\begin{align*}
 &\int_{\f12 r_0 }^{ r_0} \int_{-1}^{  \min\{s_*, r_0^{-1}\tilde{r}\} } ((r+\tilde{r}-r_0s) r^{\ga-1}+u_+^{\ga})^{-l}v_+^{-\a}u_+^{-\b} \tilde{r}^{k} d\tilde{r}ds\\
 &\les t_0^{k}\int_{\f12 r_0 }^{ r_0 } \int_{-1}^{  \min\{s_*, r_0^{-1}\tilde{r}\}}  (\sqrt{1-s}r_0)^{-l\ga-\a+\b}(1+r_0^2-t_0^2+2\tilde{r}(t_0-r_0s))^{-\b}  d\tilde{r}ds  \\
 &\les t_0^{k} \int_{-1}^{ 1-2r_0^{-2}(1+u_0^2 )\}}  \int_{ \max\{r_0s, \f12 r_0, \frac{t_0^2-r_0^2+4}{2(t_0-r_0s)}\} }^{ r_0 }  (\sqrt{1-s}r_0)^{-l\ga-\a+\b}(1+r_0^2-t_0^2+2\tilde{r}(t_0-r_0s))^{-\b}  d\tilde{r}ds  \\
 &\les t_0^{k} \int_{-1}^{ 1}    (u_0+\sqrt{1-s}r_0)^{-l\ga-\a+2-\b} (t_0-r_0s)^{-1}  ds  \\
 &\les t_0^{k-2}u_0^{3-l\ga-\a-\b}+t_0^{k+1-l\ga-\a-\b}.
 \end{align*}
 Again the last step has been shown in \eqref{eq:It:00}.

Lastly for the integral on $r_0^{-1}\tilde{r}<s\leq s_*$, $\frac{r_0}{2}\leq \tilde{r}\leq r_0$, it in particular requires that
\begin{align*}
  \tilde{r}\leq r_*=\f12 t_0+\sqrt{2+\f12 r_0^2-\frac{1}{4}t_0^2}<r_0.
\end{align*}
Moreover we have 
\begin{align*}
  r+\tilde{r}-r_0s=\frac{(1-s^2)r_0^2}{r+r_0s-\tilde{r}}\sim  \frac{(1-s)r_0^2}{ r}, \quad r\sim r_0s-\tilde{r}+\sqrt{1-s}r_0 \sim r_0-\tilde{r}+\sqrt{1-s}r_0 .
\end{align*}
We first estimate that 
\begin{align*}
 &\int_{ r_0-8 u_0 }^{ r_* } \int_{r_0^{-1}\tilde{r} }^{  s_* } ((r+\tilde{r}-r_0s) r^{\ga-1}+u_+^{\ga})^{-l}v_+^{-\a}u_+^{-\b}  \tilde{r}^k d\tilde{r}ds\\
 &\les t_0^{k} \int_{  r_0-8u_0 }^{ r_*} \int_{r_0^{-1}\tilde{r}}^{  s_* }  ((1-s)r_0^{2}r^{\ga-2})^{-l} r^{-\a+\b} (v_+u_+)^{-\b}    d\tilde{r}ds  \\
  &\les t_0^{k} \int_{  r_0-8u_0 }^{ r_*} \int_{r_0^{-1}\tilde{r}}^{  s_* }  (1-s)^{-\frac{l\ga+\a-\b}{2}}    r_0^{-l\ga-\a+\b} (1+r_0\tilde{r}(1-s)-2u_0(v_0-\tilde{r}))^{-\b}    d\tilde{r}ds\\
  &\les t_0^{k-2} \int_{  r_0-8u_0 }^{ r_*} \int_{1}^{ r_0(r_0-\tilde{r})+u_0^2 }  (\tilde{s}+u_0(v_0-\tilde{r}))^{-\frac{l\ga+\a-\b}{2}}      \tilde{s}^{-\b}    d\tilde{r}d \tilde{s}\\
  &\les t_0^{k-2} \int_{  r_0-8u_0 }^{ r_*}    (u_0(v_0-\tilde{r}))^{1-\frac{l\ga+\a+\b}{2}}      d\tilde{r}\\
  &\les t_0^{k-2}     u_0^{3-l\ga-\a-\b}.  
\end{align*}
Here note that $l\ga+\a+\b>2$ and $\b<1$. 
 
 Now it remains to estimate the integral on $\frac{r_0}{2}\leq \tilde{r}\leq r_0-8 u_0$ (in particular $r_0\geq 16 u_0$), $r_0^{-1}\tilde{r}\leq s\leq s_*$.  
Denote 
\begin{align*}
s_0=r_0^{-1}\tilde{r},\quad s_1=r_0^{-1}\tilde{r} (2-r_0^{-1}\tilde{r}). 
\end{align*}
It can be checked that  
\begin{align*}
s_1\leq s_*,\quad \forall \tilde{r}\leq r_0-8u_0. 
\end{align*}
On $s_0\leq s\leq s_1$, note that 
\begin{align*}
&r\sim r_0-\tilde{r}+\sqrt{1-s}r_0\sim \sqrt{1-s}r_0,\\
& 1+r_0\tilde{r}(1-s_1)-2u_0(v_0-\tilde{r})\geq \f12 (r_0-\tilde{r})^2-2u_0 (v_0-\tilde{r})\geq \frac{1}{8}(r_0-\tilde{r})^2.
\end{align*}
Similarly we can estimate that 
\begin{align*}
 &\int^{ r_0-8 u_0 }_{ \frac{r_0}{2}} \int_{s_0}^{  s_1 } ((r+\tilde{r}-r_0s) r^{\ga-1}+u_+^{\ga})^{-l}v_+^{-\a}u_+^{-\b}  \tilde{r}^k d\tilde{r}ds\\
  &\les t_0^{k} \int^{ r_0-8 u_0 }_{ \frac{r_0}{2}} \int_{s_0}^{  s_1 }  (1-s)^{-\frac{l\ga+\a-\b}{2}}    r_0^{-l\ga-\a+\b} (1+r_0\tilde{r}(1-s)-2u_0(v_0-\tilde{r}))^{-\b}    d\tilde{r}ds\\
  &\les t_0^{k-2} \int^{ r_0-8 u_0 }_{ \frac{r_0}{2}}  \int_{\frac{1}{8}(r_0-\tilde{r})^2}^{ r_0(r_0-\tilde{r})+u_0^2 }  (\tilde{s}+u_0(v_0-\tilde{r}))^{-\frac{l\ga+\a-\b}{2}}      \tilde{s}^{-\b}    d\tilde{r}d \tilde{s}\\
  &\les t_0^{k-2}  \int^{ r_0-8 u_0 }_{ \frac{r_0}{2}}     ( (r_0-\tilde{r})^2)^{1-\frac{l\ga+\a+\b}{2}}      d\tilde{r}\\
  &\les t_0^{k-2}     (u_0^{3-l\ga-\a-\b}+t_0^{3-l\ga-\a-\b}).  
\end{align*}
On the region $s_1\leq s\leq s_*$, we instead have 
\begin{align*}
&r\sim r_0-\tilde{r}+\sqrt{1-s}r_0\sim r_0-\tilde{r},\\
& 1+r_0\tilde{r}(1-s_1)-2u_0(v_0-\tilde{r})\leq  (r_0-\tilde{r})^2 .
\end{align*}
Therefore we can show that 
\begin{align*}
 &\int^{ r_0-8 u_0 }_{ \frac{r_0}{2}} \int_{s_1}^{  s_* } ((r+\tilde{r}-r_0s) r^{\ga-1}+u_+^{\ga})^{-l}v_+^{-\a}u_+^{-\b}  \tilde{r}^k d\tilde{r}ds\\
 &\les t_0^{k} \int^{ r_0-4 u_0 }_{ \frac{r_0}{2}} \int_{s_1}^{  s_* }  ((1-s)r_0^{2}r^{\ga-2})^{-l} r^{-\a+\b} (v_+u_+)^{-\b}    d\tilde{r}ds  \\
  &\les t_0^{k} \int^{ r_0-8 u_0 }_{ \frac{r_0}{2}} \int_{s_1}^{  s_* }  (1-s)^{-l} (r_0-\tilde{r})^{-\a+\b+l(2-\ga)}   r_0^{-2l}  (1+r_0\tilde{r}(1-s)-2u_0(v_0-\tilde{r}))^{-\b}    d\tilde{r}ds\\
  &\les t_0^{k-2} \int^{ r_0-8 u_0 }_{ \frac{r_0}{2}}  \int_{1}^{  (r_0-\tilde{r})^2 }  (r_0-\tilde{r})^{-\a+\b+l(2-\ga)}  (\tilde{s}+u_0(v_0-\tilde{r}))^{-l}      \tilde{s}^{-\b}    d\tilde{r}d \tilde{s}\\
  &\les t_0^{k-2}  \int^{ r_0-8 u_0 }_{ \frac{r_0}{2}}  (r_0-\tilde{r})^{-\a+\b+l(2-\ga)} ((u_0(r_0-\tilde{r}))^{1-l-\b}+(r_0-\tilde{r})^{2-2l-2\b})          d\tilde{r}\\
  &\les t_0^{k-2}     (u_0^{3-l\ga-\a-\b}+t_0^{3-l\ga-\a-\b}+u_0^{1-l-\b}t_0^{2-\a+l-l\ga}).  
\end{align*}
Combining all the above bounds, we have shown \eqref{eq:NLW:def:3d:ex:ID:H} and finished the proof for the Lemma.

\end{proof}

We have the following  decay estimates for $\phi^{lin}_H$ inside $\mathbf{D}$.
\begin{Prop}
\label{prop:NLW:def:3d:ex:ID:H}
Let $p$ and $\ga_0$ verify the same assumptions as in the main theorem \ref{thm:main}. Then inside the hyperboloid $\mathbf{D}$, if $p>\frac{1+\sqrt{17}}{2}$, then we have
\begin{equation}
\label{eq:phi:pt:Br:largep:lin}
|\phi^{lin}_H(t_0, x_0)|\leq C (2+t_0+|x_0|)^{-1}(2+||x_0|-t_0|)^{-\frac{\ga_0-1}{2}} \sqrt{\mathcal{E}_{1, \ga_0} }.
\end{equation}
Otherwise if $2<p\leq \frac{1+\sqrt{17}}{2}$ and $1<\ga_0<p-1$, then
\begin{equation}
\label{eq:phi:pt:Br:smallp:lin}
|\phi^{lin}_H (t_0, x_0)|\leq C  (2+t_0+|x_0|)^{-\frac{3+(p-2)^2}{(p+1)(5-p)}\ga_0}(1+||x_0|-t_0|)^{-\frac{\ga_0}{p+1}} \sqrt{\mathcal{E}_{1, \ga_0}  }
\end{equation}
 for some constant $C$ depending on $\ga_0$, $p$ and the zeroth order weighted energy $\mathcal{E}_{0, \ga_0}  $.
\end{Prop}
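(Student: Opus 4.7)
The plan is to treat $\phi_H^{lin}$ as a solution of the inhomogeneous linear wave equation $\Box\phi_H^{lin}=|\phi|^{p-1}\phi(1-\mathbf{1}_{\mathbf{D}^+})$ with initial data $(\phi_0,\phi_1)$, and to apply Kirchhoff's representation formula at the interior point $q=(t_0,x_0)\in\mathbf{D}$:
\begin{equation*}
4\pi\phi_H^{lin}(t_0,x_0)=4\pi\phi^{lin}(t_0,x_0)-\int_{\mathcal{N}^-(q)\cap\{t\geq 0\}\setminus\mathbf{D}^+}|\phi|^{p-1}\phi\,\tilde r\,d\tilde r\,d\tilde\omega,
\end{equation*}
where $\phi^{lin}$ denotes the solution of the homogeneous linear wave equation with data $(\phi_0,\phi_1)$. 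The key structural feature is that the forcing is supported \emph{outside} $\mathbf{D}^+$, a set contained (away from a thin strip near the hyperboloid) in the exterior region $\{t+2\leq|x|\}$, where both the pointwise decay of $\phi$ from Proposition \ref{prop:NLW:def:3d:ex} and the weighted energy flux on backward light cones from Proposition \ref{prop:EF:cone:NW:3d} are already available.

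First I would estimate the linear evolution $\phi^{lin}(t_0,x_0)$ by the standard vector field / Klainerman--Sobolev argument applied to data bounded in $\mathcal{E}_{1,\gamma_0}$ with $\gamma_0>1$, producing the sharp linear rate $(1+t_0+|x_0|)^{-1}(1+||x_0|-t_0|)^{-(\gamma_0-1)/2}\sqrt{\mathcal{E}_{1,\gamma_0}[\phi]}$, which matches the large-$p$ claim directly and is strictly stronger than the small-$p$ claim. Thus the linear contribution is subsumed by the bound we want in either case.

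For the nonlinear source integral I would repeat the interpolation scheme from the proof of Proposition \ref{prop:NLW:def:3d:ex}. Setting $w=(1+\tau)v_+^{\gamma_0}+u_+^{\gamma_0}$ and applying H\"older, one has
\begin{equation*}
\Bigl|\int_{\mathcal{N}^-(q)\setminus\mathbf{D}^+}|\phi|^{p-1}\phi\,\tilde r\,d\tilde r d\tilde\omega\Bigr|\les\Bigl(\int_{\mathcal{N}^-(q)}w|\phi|^{p+1}\tilde r^2\Bigr)^{\frac{p-1}{p+1}}\Bigl(\int_{\mathcal{N}^-(q)\setminus\mathbf{D}^+}w^{-\frac{p-1}{2}}|\phi|^{\frac{p+1}{2}}\tilde r^{\frac{3-p}{2}}\Bigr)^{\frac{2}{p+1}}.
\end{equation*}
The first factor is bounded by $\mathcal{E}_{0,\gamma_0}[\phi]^{(p-1)/(p+1)}$ via Proposition \ref{prop:EF:cone:NW:3d}. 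For the second, on the support of the integrand $\phi_H^{lin}\equiv\phi$ and we sit in the exterior region, so the already-proved pointwise bound of Proposition \ref{prop:NLW:def:3d:ex} can be substituted for $|\phi|^{(p+1)/2}$, leaving a purely geometric weighted integral on the backward cone that Lemma \ref{lem:integration:ex:ab} evaluates with the expected $(t_0,|x_0|)$-dependence. For $p>(1+\sqrt{17})/2$ this closes immediately; for $2<p\leq(1+\sqrt{17})/2$ I would split the cone at $\tilde r\sim t_0^{(p-1)\gamma_0/(5-p)}$ and treat the near-tip and far-tip portions as in Proposition \ref{prop:NLW:def:3d:ex}. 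Crucially, no Gronwall argument is required here because $\phi_H^{lin}$ is not being solved self-consistently; the bound on $\phi$ along the portion of the cone supporting the integrand is already in hand.

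The main obstacle is the thin transition strip $\{t+2>|x|,\,(t,x)\notin\mathbf{D}^+\}$ lying between the hyperboloid $\Hy$ and the coordinate cone $\{t+2=|x|\}$, where the forcing is non-zero but the exterior pointwise bound of Proposition \ref{prop:NLW:def:3d:ex} does not literally apply. I would deal with this strip either by extending that exterior bound up to $\Hy$ via finite speed of propagation, or by trading the pointwise bound for an $L^2$-trace on $\Hy$ supplied by Proposition \ref{prop:energyflux:H:ex} and controlling the corresponding slice of the cone integral by the weighted energy flux. Past that technicality, what remains is the bookkeeping of powers of $v_+$, $u_+$ and $\tau$ so that the weighted integral closes with the two stated exponents, which is the same calculation performed in the proof of Proposition \ref{prop:NLW:def:3d:ex}.
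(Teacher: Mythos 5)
Your skeleton for the small-$p$ case (representation formula, H\"older against the backward-cone weighted flux of Proposition \ref{prop:EF:cone:NW:3d}, and the observation that no Gronwall loop is needed because the forcing $|\phi|^{p-1}\phi(1-\mathbf{1}_{\mathbf{D}^+})$ is supported where $\phi$ is already controlled) is the right one and matches the paper. But there are two problems. First, the ``main obstacle'' you identify is vacuous: by construction the interior region $\{|x|\le t+2\}$ lies strictly inside $\mathbf{D}$ (on $\partial\mathbf{D}$ one has $|x|=\sqrt{(t^*)^2-ct^*}>t+2$), so $(\mathbb{R}^{3+1}\setminus\mathbf{D}^+)\cap\{t\ge 0\}$ is entirely contained in the exterior region $\{|x|\ge t+2\}$ and there is no transition strip on which the forcing is nonzero but Propositions \ref{prop:EF:cone:NW:3d} and \ref{prop:NLW:def:3d:ex} fail to apply. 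Second, and more seriously, the step you dismiss as already done is the actual content of the proof: you claim the residual weighted integral on the cone ``is evaluated by Lemma \ref{lem:integration:ex:ab} with the expected $(t_0,|x_0|)$-dependence,'' but that lemma is stated and proved only for tips $q=(t_0,x_0)$ in the exterior region, i.e.\ $0\le\tilde r\le t_0<r_0$ (its proof repeatedly uses $\tilde r\le r_0$, e.g.\ in $\tilde r-r_0 s\le r_0(1-s)$). Here $q$ is an interior point, typically with $t_0\gg r_0$, the spheres $\S_{(t_0-\tilde r,x_0)}(\tilde r)$ can enclose the spatial origin, and the domain of integration is cut by the hyperboloid constraint $s\le s_*$ with $2\tilde r(t_0-r_0 s_*)+4=(t_0-r_0)(t_0+r_0)$. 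The paper's proof of \eqref{eq:phi:pt:Br:smallp:lin} consists almost entirely of a new multi-case evaluation of $\int_{\mathcal{N}^{-}(q)\setminus\mathbf{D}}((1+\tau)v_+^{\ga}+u_+^{\ga})^{-p}\tilde r^{1-p}$ under exactly these constraints, culminating in $t_0^{\frac{3-p-2\ga}{p+1}}u_0^{-\frac{1}{p+1}}$ and a delicate exponent comparison with $\ga$ close to $\ga_0$; none of this is supplied by, or reducible to, Lemma \ref{lem:integration:ex:ab}. Your proposed split of the cone at $\tilde r\sim t_0^{(p-1)\ga_0/(5-p)}$ is also pointless here, since outside $\mathbf{D}$ one automatically has $\tilde r\gtrsim\frac{t_0-r_0}{2}$, so the near-tip piece carries no forcing.

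Two further remarks on routes. The paper uses the full-strength H\"older split $\left(\int w|\phi|^{p+1}\right)^{\frac{p}{p+1}}\left(\int w^{-p}\tilde r^{1-p}\right)^{\frac{1}{p+1}}$, leaving a second factor with no $\phi$ at all, whereas you keep $|\phi|^{(p+1)/2}$ and substitute the exterior pointwise bound; your variant could be made to work but generates an even more involved geometric integral (extra $r$ and $r-t$ weights) whose closure you would still have to verify for interior tips, so it buys nothing. For the large-$p$ estimate \eqref{eq:phi:pt:Br:largep:lin} the paper does not use the representation formula at all: it invokes the standard energy/commutator method with data on $\Hy$, which is why Proposition \ref{prop:energyflux:H:ex} proves the first-order flux bound \eqref{eq:EE:hyB:1derivative} precisely (and only) in that range of $p$, and why the constant there is $\mathcal{E}_{1,\ga_0}[\phi]^{\frac{p-1}{2}}$ rather than $\sqrt{\mathcal{E}_{1,\ga_0}[\phi]}$. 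Your representation-formula route for large $p$ would again require the interior-tip cone integral to reproduce the full $v_+^{-1}u_+^{-(\ga_0-1)/2}$ rate, which you have not checked.
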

\begin{proof}
 Let $q=(t_0, x_0)$. Denote 
$  N(q)=\Nd, \quad r_0=|x_0|.$  
By definition, we have
\begin{equation*}
\begin{split}
4\pi\phi^{lin}_H(t_0, x_0)&=\int_{\tilde{\om}}t_0  \phi_1(x_0+t_0\tilde{\om})d\tilde{\om}+\pa_{t_0}\big(\int_{\tilde{\om}}t_0  \phi_0(x_0+t_0\tilde{\om})d\tilde{\om}   \big)-\int_{N(q)}|\phi|^{p-1} \phi \ \tilde{r} d\tilde{r}d\tilde{\om}.
\end{split}
\end{equation*}
By using the standard energy estimate, the linear evolution part verifies the following decay estimates
\begin{align*}
  |\int_{\tilde{\om}}t_0  \phi_1(x_0+t_0\tilde{\om})d\tilde{\om}|+|\pa_{t_0}\big(\int_{\tilde{\om}}t_0  \phi_0(x_0+t_0\tilde{\om})d\tilde{\om}   \big)|\les (1+v_0)^{-1}(1+|u_0|)^{-\frac{\ga_0-1}{2}}\sqrt{\mathcal{E}_{1, \ga_0} }.
\end{align*}
We now need to control the contribution of the nonlinear part from the exterior region.
Minor modification of the above argument for deriving the decay estimates for the solutions in the exterior region  also applies to the case $|t_0-r_0|\leq 10$ (that is Lemma \ref{lem:integration:ex:ab} holds for $|t_0-r_0|\leq 10$). Alternatively by moving the origin around, the previous decay estimates in the exterior region are also valid for $q=(t_0, x_0)$ with $|t_0-r_0|\leq 10$. Since $(t_0, x_0)\in \mathbf{D}$, in the sequel, it suffices to consider the case when $t_0\geq 20$ and $t_0>|x_0|+10$.

For the case when $2<p<\frac{1+\sqrt{17}}{2}$, $1<\ga_0<p-1$, 
first in view of the weighted energy estimate of Proposition \ref{prop:EF:cone:NW:3d}, we can bound that
\begin{align*}
   |\int_{N(q) }|\phi|^{p-1} \phi \ \tilde{r} d\tilde{r}d\tilde{\om}| 
  &\leq \left(\int_{\mathcal{N}^{-}(q)}|\phi|^{p+1}((1+\tau)v_+^{\ga}+u_+^{\ga})\tilde{r}^2 d\tilde{r}d\tilde{\om}\right)^{\frac{p}{p+1}}\\
  & \quad \cdot \left(\int_{\Nd}((1+\tau)v_+^{\ga}+u_+^{\ga})^{-p}\tilde{r}^{1-p} d\tilde{r}d\tilde{\om}\right)^{\frac{1}{p+1}}\\
  &\les \left(\int_{N(q) }((1+\tau)v_+^{\ga}+u_+^{\ga})^{-p}\tilde{r}^{1-p} d\tilde{r}d\tilde{\om}\right)^{\frac{1}{p+1}}
   \end{align*}
for all $\ga<\ga_0$. By using Lemma \ref{Lem:NLW:def:3d:ex:ID:H} with $l=p$, $\ga=\ga$, $\a=\b=0$, $k=1-p$, we derive that 
\begin{align*}
\int_{\mathcal{N}^{-}(q) \backslash \mathbf{D} } ((1+\tau)v_+^{\ga}+u_+^\ga)^{-p} \tilde{r}^{1-p}d\tilde{r}d\tilde{\om}
&\les  t_0^{-p-1} (u_0^{3-p\ga}+t_0^{3-p\ga}) +t_0^{-p\ga} (t_0^{2-p}+u_0^{2-p}+u_0^{1-p}t_0)\\
&\les t_0^{-p-1} u_0^{3-p\ga}  +t_0^{1-p\ga}  u_0^{1-p}. 
\end{align*}
Choosing $\ga$ sufficiently close to $\ga_0$, we then derive that
\begin{align*}
  |\phi^{lin}_H(t_0, x_0)|\les t_0^{-1} u_0^{\frac{3-p\ga}{p+1}}  +t_0^{\frac{1-p\ga}{p+1}}  u_0^{\frac{1-p}{p+1}} \les t_0^{-\frac{(p-2)^2+3}{(p+1)(5-p)}\ga_0}u_0^{-\frac{\ga_0}{p+1}}.
\end{align*}
This proves the decay estimate \eqref{eq:phi:pt:Br:smallp:lin} for the case when $2<p\leq \frac{1+\sqrt{17}}{2}$.

For the case when $p>\frac{1+\sqrt{17}}{2}$, similarly in view of the weighted energy estimate of Proposition \ref{prop:EF:cone:NW:3d}, we first estimate that 
\begin{align*}
   |\int_{N(q) }|\phi|^{p-1} \phi \ \tilde{r} d\tilde{r}d\tilde{\om}| 
  &\leq \left(\int_{\mathcal{N}^{-}(q)}|\phi|^{p+1}((1+\tau)v_+^{\ga}+u_+^{\ga})\tilde{r}^2 d\tilde{r}d\tilde{\om}\right)^{\frac{p}{p+1}}\\
  & \quad \cdot \left(\int_{\Nd}((1+\tau)v_+^{\ga}+u_+^{\ga})^{-p}\tilde{r}^{1-p} d\tilde{r}d\tilde{\om}\right)^{\frac{1}{p+1}}\\
  &\les \left(\int_{N(q) }((1+\tau)v_+^{\ga}+u_+^{\ga})^{-\frac{p-1}{2}} |\phi|^{\frac{p+1}{2}}\tilde{r}^{\frac{3-p}{2}} d\tilde{r}d\tilde{\om}\right)^{\frac{2}{p+1}},
\end{align*}
for all $\ga<\ga_0$. Since $N(q)$ lies in the exterior region, we can control $\phi$ by using the pointwise decay estimates already showed previously in this section. More precisely we have   
\begin{align*}
&\int_{N(q) }((1+\tau)v_+^{\ga}+u_+^{\ga})^{-\frac{p-1}{2}} |\phi|^{\frac{p+1}{2}}\tilde{r}^{\frac{3-p}{2}} d\tilde{r}d\tilde{\om} \\
& \les \mathcal{E}_{1, \ga_0}^{\frac{p+1}{2}} \int_{N(q) }((1+\tau)v_+^{\ga}+u_+^{\ga})^{-\frac{p-1}{2}} v_+^{-\frac{p+1}{2}} u_+^{-\frac{(\ga_0-1)(p+1)}{4}}\tilde{r}^{\frac{3-p}{2}} d\tilde{r}d\tilde{\om}
\end{align*}
By the assumption $(p-1)(\ga_0+1)>4$, we always have
\begin{align*}
\frac{(p-1)\ga_0}{2}+\frac{p+1 }{2}>3.
\end{align*}
In particular we can choose $\ga$ sufficiently close to $\ga_0$ such that $(p-1)(\ga+1)>4$. 
 By using Lemma \ref{Lem:NLW:def:3d:ex:ID:H} with $l=\frac{p-1}{2}$, $\ga=\ga$  , $\a=\frac{p+1}{2}$, $\b<\min\{1, \frac{(p+1)(\ga_0-1)}{4} \}$, $k=\frac{3-p}{2}$, we derive that 
\begin{align*}
&\int_{\mathcal{N}^{-}(q) \backslash \mathbf{D} } ((1+\tau)v_+^{\ga}+u_+^\ga)^{-\frac{p-1}{2}}v_+^{-\frac{p+1}{2}} u_+^{-\frac{(p+1)(\ga_0-1)}{4}} \tilde{r}^{\frac{3-p}{2}}d\tilde{r}d\tilde{\om}\\
&\les  t_0^{-\frac{p+1}{2}} u_0^{3-\frac{p-1}{2}\ga-\frac{p+1}{2}-\b } +t_0^{-\frac{p-1}{2}\ga-\frac{p+1}{2}}u_0^{-\b}(t_0^{\frac{5-p}{2}}+u_0^{\frac{3-p}{2}}t_0 )\\
&\les t_0^{-\frac{p+1}{2}} u_0^{-\frac{(p+1)(\ga_0-1)}{2}}
\end{align*}
with $\b$ such that 
\begin{align*}
0\leq \b <\min\{1, \frac{(p+1)(\ga_0-1)}{4}\},\quad \b+\frac{p+1}{2}+\frac{p-1}{2}\ga_0-3\geq \frac{(p+1)(\ga_0-1)}{4}. 
\end{align*}
Here we note that 
\begin{align*}
\frac{p\ga_0}{2}\geq \max\{1, \frac{5-p}{2}\}. 
\end{align*}
 Hence we have
\begin{align*}
 |\int_{N(q) }|\phi|^{p-1} \phi \ \tilde{r} d\tilde{r}d\tilde{\om}|\les \sqrt{\mathcal{E}_{1, \ga_0}} t_0^{-1}u_0^{-\frac{\ga_0-1}{2}}
\end{align*} 
and the decay estimate \eqref{eq:phi:pt:Br:largep:lin} then follows for the case when $\frac{1+\sqrt{17}}{2}<p\leq 5$. This proves estimate \eqref{eq:phi:pt:Br:largep:lin}. We hence finished the proof for the Proposition.

\end{proof}

\section{Semilinear wave equation on a truncated backward light cone}
\label{sec:comp}

We use conformal method to study the asymptotic decay properties for the solution in the interior region, which is conformal to a truncated backward light cone. The content in this section is independent and may be of independent interest.

Let $R>1$ be a constant and $\B_R$ be the ball with radius $R$ in $\mathbb{R}^3$. Denote $\cJ^+(\B_R)$ be the future maximal Cauchy development, that is, $(t, x)\in\mathbb{R}\times \mathbb{R}^{3}$ belongs $\cJ^+(\B_R)$ if and only if $x+t\om\in \B_R$ for all $\om\in \mathbb{S}^2$.
Let $\phi$ be the solution to the following nonlinear wave equation
\begin{equation}
  \label{eq:NLW:3D:conf}
  \begin{cases}
    \Box \phi= \La^{3-p}|\phi|^{p-1}\phi,\\
    \phi(0, x)=\phi_0,\quad \pa_t\phi(0, x)=\phi_1, \quad x\in \B_R
  \end{cases}
\end{equation}
on $\mathcal{J}^{+}(\B_R)$ with $\La=((R-t)^2-|x|^2)^{-1}$.

For any fixed point $q=(t_0, x_0)\in \mathcal{J}^+(\B_R)$, recall that $\mathcal{N}^{-}(q)$ is the past null cone of the point $q$ in $\mathcal{J}^{+}(\B_R)$  and $\mathcal{J}^{-}(q)$ is the past of the point $q$, that is, the region bounded by $\mathcal{N}^{-}(q)$ and $\B_R$. As defined in Section \ref{sec:notation}, the tilde coordinates and quantities are referred to those ones under the coordinates centered at the given point $q=(t_0, x_0)$. 
For some constant $0<\ga<1$, define
\begin{align*}
  \mathcal{I}_{\ga}=  \sup\limits_{q\in \cJ^+(\B_R) }\int_{\mathcal{N}^{-}(q)} ( v_*^\ga+(1-\tau)u_*^{\ga})   \La^{3-p} |\phi|^{p+1} d\sigma, 
\end{align*}
in which 
\[
u_*=R-t+r, \quad v_*=R-t-r,\quad \tau=\frac{x\cdot (x-x_0)}{|x||x-x_0|}=\om\cdot \tilde{\om},\quad \La=u_*^{-1}v_*^{-1}. 
\]
 The associated linear evolution to \eqref{eq:NLW:3D:conf} is denoted as $\phi^{lin}$, that is,
\begin{align*}
  \Box\phi^{lin}=0,\quad \phi^{lin}(0, x)=\phi_0,\quad \pa_t\phi^{lin}(0, x)=\phi_1,\quad t+|x|\leq R.
\end{align*}

To avoid too many constants, we make a convention in this section that $A\les B$ means that there is a constant $C$, depending only on $R$, $p$, $\ga$, $ \mathcal{I}_{\ga}$ and a fixed small constant $0<\ep<10^{-2}(1-\ga)$, such that $A\leq CB$.

We first establish two integration lemmas.
\begin{Lem}
\label{lem:bd:vga}
Fix $(t_0, x_0)\in \mathcal{J}^+(\B_R)$. For all $0\leq \tilde{r}\leq t_0$, $t=t_0-\tilde{r}$ and $r=|x_0+\tilde{r}\tilde{\om}|$, we have the following uniform bound
\begin{equation*}
\int_{\S_{(t, x_0)}(\tilde{r})}(R-t-r)^{-\ga'}d\tilde{\om}\leq C (R-t)^{\ga'}(R-t_0)^{-\ga'} (R-t_0-|x_0|+\tilde{r})^{-\ga'}
\end{equation*}
for all $ \ga'<1$ and some constant $C$ depending only on $\ga'$ and $R$. Here $\S_{(t, x_0)}(\tilde{r})$ denotes the 2-sphere with radius $\tilde{r}$ centered at $(t, x_0)$.
\end{Lem}
\begin{proof}
Denote $u_0=R-t_0+r_0$ and $v_0=R-t_0-r_0$ where $r_0=|x_0|$. By the assumption that $t_0+r_0\leq R$, we in particular have that $r\leq R-t$, which implies that
\[
(R-t-r)^{-\ga'}\leq 2 \left((R-t)^2-r^2\right)^{-\ga'}(R-t)^{\ga'}.
\]
Note that $r^2=r_0^2+\tilde{r}^2+2 \tilde{r} x_0\cdot \tilde{\om}$. We can compute that
\begin{align*}
\int_{|\tilde{\om}|=1}(R-t-r)^{-\ga'}d\tilde{\om}&\leq 4\pi (R-t)^{\ga'} \int_{-1}^{1}((R-t)^2-t_0^2-\tilde{r}^2-2\tilde{r}r_0\tau)^{-\ga'}d\tau\\
&\leq 4\pi(1-\ga')^{-1} (R-t)^{\ga'} (r_0 \tilde{r})^{-1} (u_0^{1-\ga'}(v_0+2\tilde{r})^{1-\ga'}-v_0^{1-\ga'}(u_0+2\tilde{r})^{1-\ga'}).
\end{align*}
By definition, we see that
\[
u_0(v_0+2\tilde{r})-v_0(u_0+2\tilde{r})=4\tilde{r}r_0.
\]
As $\gamma'<1$, we derive that
\[
u_0^{1-\ga'} (v_0+2\tilde{r})^{1-\ga'}-v_0^{1-\ga'}(u_0+2\tilde{r})^{1-\ga'}\leq C(R, \ga') \tilde{r}r_0 u_0^{-\gamma'}(v_0+\tilde{r})^{-\gamma'}
\]
for some constant $C(R, \ga')$ depending only on $\ga'$ and $R$.
The lemma then follows as $0\leq r_0\leq R-t_0$.
\end{proof}

The above integration lemma will be used for  case when $p>\frac{1+\sqrt{17}}{2}$. The following specific lemma will be used for small $p$.
\begin{Lem}
\label{lem:bd:vga:smallp:in}
Fix $(t_0, x_0)\in \mathcal{J}^+(\B_R)$. For all $0\leq \tilde{r}\leq t_0$, $t=t_0-\tilde{r}$, $r=|x_0+\tilde{r}\tilde{\om}|$ and $0<\ga<1$, $0\leq \a< 1$, we have the following uniform bound
\begin{equation*}
\int_{\S_{(t, x_0)}(\tilde{r})} ((1-\tau)u_*^{\ga}+v_*^{\ga})^{-\a} d\tilde{\om}\leq C (R-t_0)^{-\a \ga}
\end{equation*}
for some constant $C$ depending only on $\ga$ , $\a$ and $R$.
\end{Lem}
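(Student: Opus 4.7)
The plan is to reduce the sphere integral to a one-dimensional integral in the variable $\de=r_0+\tilde r-r\in[0,2\min(\tilde r,r_0)]$ used in the proof of Lemma~\ref{lem:bd:vga}. With $v_0=R-t_0-r_0\geq 0$ and $u_0=R-t_0+r_0$, one has $r=r_0+\tilde r-\de$, $v_*=v_0+\de$, $u_*=u_0+2\tilde r-\de$, $1-\tau=\de(2r_0-\de)/(2\tilde r(r_0+\tilde r-\de))$, and
\[
\int_{\S_{(t,x_0)}(\tilde r)}G^{-\a}\,d\tilde\om=\frac{2\pi}{\tilde rr_0}\int_0^{2\min(\tilde r,r_0)}G^{-\a}(r_0+\tilde r-\de)\,d\de,\qquad G:=(1-\tau)u_*^\ga+v_*^\ga.
\]
I split into three regimes according to the sizes of $v_0$ and $\tilde r$ relative to $R-t_0=v_0+r_0$.

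If $v_0\geq(R-t_0)/2$, then $v_*\geq(R-t_0)/2$ everywhere on the sphere, so $G^{-\a}\leq v_*^{-\a\ga}\leq 2^{\a\ga}(R-t_0)^{-\a\ga}$ pointwise and the bound follows from $|\S|=4\pi$. Otherwise $r_0>(R-t_0)/2$, so $r_0$ is comparable to $R-t_0$. If in addition $\tilde r\geq r_0/2$, then $v_0+\tilde r\geq(R-t_0)/4$ and $R-t=R-t_0+\tilde r\leq v_0+r_0+\tilde r\leq 3(v_0+\tilde r)$, so the prefactor $(R-t)^{\a\ga}(v_0+\tilde r)^{-\a\ga}$ coming from Lemma~\ref{lem:bd:vga} (applied with $\gamma'=\a\ga<1$) is bounded by a universal constant; discarding the nonnegative term $(1-\tau)u_*^\ga$ from $G$ and invoking the lemma on $\int v_*^{-\a\ga}\,d\tilde\om$ closes this case.

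The only remaining case is $r_0$ comparable to $R-t_0$ together with $\tilde r<r_0/2$. Here $v_*\in[v_0,v_0+2\tilde r]$ may remain much smaller than $R-t_0$, so dropping $(1-\tau)u_*^\ga$ would yield a spurious $\tilde r^{-\a\ga}$ once combined with the measure's $1/\tilde r$. On $\de\in[0,2\tilde r]$ both $2r_0-\de$ and $r_0+\tilde r-\de$ are comparable to $r_0$, hence $1-\tau$ is comparable to $\de/\tilde r$ and $u_*$ to $u_0$, which is in turn comparable to $R-t_0$; thus $G$ is comparable to $(\de/\tilde r)(R-t_0)^\ga+(v_0+\de)^\ga$. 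Using $(A+B)^{-\a}\leq\min(A^{-\a},B^{-\a})$ and splitting the $\de$-integral at the crossover $\de^*$ where $(\de/\tilde r)(R-t_0)^\ga=(v_0+\de)^\ga$ gives
\[
\int_0^{2\tilde r}G^{-\a}\,d\de\les\int_0^{\de^*}(v_0+\de)^{-\a\ga}\,d\de+\tilde r^\a(R-t_0)^{-\a\ga}\int_{\de^*}^{2\tilde r}\de^{-\a}\,d\de.
\]
Distinguishing $\de^*\leq v_0$ from $\de^*>v_0$ and using $v_0,\tilde r\leq R-t_0$, elementary computations bound each piece by $C\tilde r(R-t_0)^{-\a\ga}$; multiplying by the measure factor $2\pi(r_0+\tilde r-\de)/(\tilde rr_0)\leq C/\tilde r$ then gives the claimed inequality. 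The principal obstacle is precisely this small-$\tilde r$ regime: the linear growth $(1-\tau)u_*^\ga\geq c\,\de(R-t_0)^\ga/\tilde r$ together with the hypothesis $\a<1$ converts $\int\de^{-\a}\,d\de$ into $\tilde r^{1-\a}$, exactly compensating the $\tilde r^{\a-1}$ produced by the prefactor.
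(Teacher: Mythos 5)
Your argument is correct and follows essentially the same strategy as the paper's proof: the same case decomposition ($v_0$ comparable to $R-t_0$, where $v_*^{\ga}$ alone suffices; large $\tilde r$, where $(1-\tau)u_*^{\ga}$ is discarded; and the delicate small-$\tilde r$ regime, where the lower bound $1-\tau\gtrsim \de/\tilde r$ combined with $\a<1$ yields an integrable singularity that exactly compensates the $1/\tilde r$ from the measure). The only cosmetic differences are your change of variable $\de=r_0+\tilde r-r$ in place of the paper's $s=-\om_0\cdot\tilde\om$ (under which your $\de/\tilde r$ is the paper's $1-s^2$ up to constants) and your reuse of Lemma \ref{lem:bd:vga} in the large-$\tilde r$ case, where the paper simply recomputes the same one-dimensional integral.
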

\begin{proof}
Using the same notations from the previous Lemma, denote $s=-\om_0\cdot \tilde{\om}$.
Recall that
  \begin{align*}
    &r^2=(\tilde{r}-r_0s)^2+(1-s^2)r_0^2,\quad \tau r =(\tilde{x}+x_0)\cdot \tilde{\om}=\tilde{r}-r_0s.
  \end{align*}
We can write the integral as
\begin{align*}
  \int_{\S_{(t, x_0)}(\tilde{r})} ((1-\tau)u_*^{\ga}+v_*^{\ga})^{-\a} d\tilde{\om}=2\pi\int_{-1}^{1} ((1-r^{-1}(\tilde{r}-r_0 s))u_*^{\ga}+v_*^{\ga})^{-\a}ds.
\end{align*}
Note that $R-t\leq u_*\leq 2(R-t)$. Hence
\begin{align*}
  \int_{\S_{(t, x_0)}(\tilde{r})} ((1-\tau)u_*^{\ga}+v_*^{\ga})^{-\a} d\tilde{\om}\les \int_{-1}^{1} ((1-r^{-1}(\tilde{r}-r_0 s))(R-t)^{\ga}+v_*^{\ga})^{-\a} ds.
\end{align*}
Here and in the following of the proof, the implicit constants rely only on $R$, $\a$, $\ga$.

For the case when $r_0\leq \frac{3}{4} (R-t_0)$, it holds that $$v_*\geq R-t_0-r_0\geq \frac{1}{4} (R-t_0),$$
from which we conclude that
\begin{align*}
  \int_{\S_{(t, x_0)}(\tilde{r})} ((1-\tau)u_*^{\ga}+v_*^{\ga})^{-\a} d\tilde{\om}\les (R-t_0)^{-\a\ga}.
\end{align*}
Otherwise for the case when $r_0\geq \frac{3}{4}(R-t_0)$, and if $\tilde{r}\leq 2 r_0 $, note that
\begin{align*}
  1-\tau=1-r^{-1}(\tilde{r}-r_0s) \geq \frac{1-s^2}{100}. 
\end{align*}
Therefore we can show that
\begin{align*}
  &\int_{-1}^{1}((1-\tau)(R-t)^{\ga}+v_*^{\ga})^{-\a} ds
  \les \int_{-1}^{1} (R-t)^{-\a\ga}(1-s^2)^{-\a} ds \les (R-t_0)^{-\a\ga}.
\end{align*}
For the remaining case $\tilde{r}\geq 2r_0$, we instead have
\begin{align*}
   v_*=R-t-r\geq v_0+10^{-2}(1+s) r_0 .
\end{align*}
Therefore we derive that
\begin{align*}
  \int_{-1}^{1}((1-\tau)(R-t)^{\ga}+v_*^{\ga})^{-\a} ds
  &\les \int_{-1}^{1} (v_0+(1+s) r_0)^{-\a\ga}  ds\\
  &\les r_0^{-1}((v_0+2r_0)^{1-\a\ga}-v_0^{1-\a\ga})\\
  &\les (R-t_0)^{-\a\ga}
\end{align*}
as $0\leq \a\ga<1$. This proves the lemma.
\end{proof}

Define
\[
\a_p=\frac{3+(p-2)^2}{(p+1)(5-p)},\quad 1<p<5.
\]
Now we are ready to prove the following estimate for the solution $\phi$.
\begin{Prop}
\label{Prop:pointwise:decay:EM}
Assume $\mathcal{I}_{\ga}$ is finite. Then the solution $\phi$ to the equation \eqref{eq:NLW:3D:conf} on $\mathcal{J}^{+}(\B_{R})$ verifies the following bounds:
\begin{itemize}
\item For the case when 
\begin{equation*}
 \frac{1+\sqrt{17}}{2}<p<5,\quad  0<\ga<1,\quad (p-1)(3-\ga)>4,
\end{equation*}
we have
\begin{equation}
\label{eq:phi:pt:Br:EM:largep}
|\phi(t_0, x_0)|\leq C \sup\limits_{|x|\leq R-t_0}|\phi^{lin}(t_0, x)|.
\end{equation}
\item
For the case when
\begin{align*}
  2<p\leq \frac{1+\sqrt{17}}{2},\quad  1<\ga_0 <p-1, \quad 0\leq \b<  \frac{p-1}{p+1}\ga_0,
\end{align*}
there exists $ \ga\in (2-\ga_0, 1) $ with finite $\mathcal{I}_{\ga}$ such that 
\begin{equation}
  \label{eq:phi:pt:Br:EM:smallp}
  |\phi(t_0, x_0)|\leq C (1+\sup\limits_{t+|x|\leq R}|\phi^{lin}u_*^{1-\b} v_*^{1-\a_p \ga_0}|)u_0^{-1+\b} v_0^{-1+\a_p\ga_0}.
\end{equation}
The constant $C$ depends only on $\mathcal{I}_{\ga} $, $R$, $p$, $\ga$ and $\b$. Here $u_*=R-t+r$, $v_*=R-t-r$ and
$u_0=R-t_0$, $v_0=R-t_0-|x_0|$.  
\end{itemize}
\end{Prop}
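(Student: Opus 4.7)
The plan is to mirror the argument for the exterior-region decay (Proposition~\ref{prop:NLW:def:3d:ex}) in the conformally rescaled setting. At the tip $q=(t_0,x_0)$, the standard representation formula for the linear wave equation yields
\begin{equation*}
4\pi\phi(t_0,x_0) = 4\pi\phi^{lin}(t_0,x_0) - \int_{\mathcal{N}^{-}(q)} \La^{3-p}|\phi|^{p-1}\phi \cdot \tilde{r}\, d\tilde{r}\, d\tilde{\om},
\end{equation*}
so everything reduces to controlling the cone integral of the nonlinearity, ultimately by (a power of) $\sup|\phi|$, and closing the estimate via Gronwall's inequality, possibly after splitting the cone.

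Apply H\"older's inequality on $\mathcal{N}^{-}(q)$ with exponents $\frac{p+1}{p-1}$ and $\frac{p+1}{2}$, using the weight $w = (1-\tau)u_*^{\ga}+v_*^{\ga}$ from Proposition~\ref{prop:EF:cone:gamma}. This gives
\begin{align*}
\Bigl|\int_{\mathcal{N}^{-}(q)} \La^{3-p}|\phi|^p \tilde{r}\, d\tilde{r}\, d\tilde{\om}\Bigr|
&\les \Bigl(\int_{\mathcal{N}^{-}(q)} w\, \La^{3-p} |\phi|^{p+1}\Bigr)^{\frac{p-1}{p+1}} \\
&\quad\times \Bigl(\int_{\mathcal{N}^{-}(q)} w^{-\frac{p-1}{2}} \La^{3-p} |\phi|^{\frac{p+1}{2}} \tilde{r}^{\frac{3-p}{2}}\, d\tilde{r}\, d\tilde{\om}\Bigr)^{\frac{2}{p+1}}.
\end{align*}
The first factor is uniformly bounded by $(\tilde{\cE}_{0,\ga}+\mathcal{I})^{\frac{p-1}{p+1}}$ thanks to Proposition~\ref{prop:EF:cone:gamma}. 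In the second factor one pulls out $\sup|\phi|^{(p+1)/2}$ at time $t_0-\tilde{r}$ and evaluates the angular integral with Lemma~\ref{lem:bd:vga:smallp:in} (and Lemma~\ref{lem:bd:vga} when applicable), leaving a one-dimensional convolution of the form $\int_0^{t_0}\mathcal{M}(t_0-\tilde{r})\, f(t_0,\tilde{r})\, d\tilde{r}$ with an explicit kernel $f$ built from $u_*$, $v_*$ and $\La$.

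In case (1) the hypothesis $(p-1)(3-\ga)>4$ is arranged precisely so that $f(t_0,\cdot)$ is integrable on $[0,t_0]$ uniformly in $t_0$; setting $\mathcal{M}(t_0)=\sup_{|x|\leq R-t_0}|\phi(t_0,x)|^{(p+1)/2}$ and applying Gronwall then yields $\mathcal{M}(t_0) \les \sup|\phi^{lin}|^{(p+1)/2}$, which gives the claimed inequality. In case (2) the kernel is no longer integrable on the whole cone. Following the exterior-region template I would split $\mathcal{N}^{-}(q)$ into a near zone $\{\tilde{r}\leq \delta\}$ and a far zone $\{\tilde{r}\geq \delta\}$ for a threshold $\delta=\delta(u_0,v_0)$ chosen to balance the two branches. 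On the far zone one applies the alternative H\"older split with exponents $\frac{p+1}{p}$ and $p+1$, which avoids pulling out $\sup|\phi|$ altogether; Lemma~\ref{lem:bd:vga:smallp:in} with $\a=p$ handles the resulting angular integral and the $\tilde{r}$-integration then produces the decay $u_0^{-1+\b}v_0^{-1+\a_p\ga_0}$. On the near zone the kernel is integrable on $[0,\delta]$, so a Gronwall step analogous to case~(1) closes the argument, contributing the $\sup|\phi^{lin} u_*^{1-\b}v_*^{1-\a_p\ga_0}|$ factor through the normalisation of $\mathcal{M}$.

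The main obstacle is the book-keeping in case~(2): one must verify that the optimal balance between the near and far zones produces exponents matching $-1+\b$ in $u_0$ and $-1+\a_p\ga_0$ in $v_0$, subject to the constraints $1<\ga_0-\ep<p-1$, $\ga=2-\ga_0+\ep$, and $\b\leq \frac{p-1}{p+1}\ga_0-\ep$. The specific form of $\a_p=\frac{3+(p-2)^2}{(p+1)(5-p)}$ is exactly what makes the two branches agree, mirroring the algebraic coincidence already exploited in Proposition~\ref{prop:NLW:def:3d:ex}.
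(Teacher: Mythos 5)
Your outline follows essentially the same route as the paper: the representation formula at the tip, H\"older against the weighted flux of Proposition \ref{prop:EF:cone:gamma} with exponents $\frac{p+1}{p-1}$, $\frac{p+1}{2}$ plus Gronwall for the large-$p$ case (the paper builds the weight $(R-t)^{\frac{1+\ga}{2}}$ into the sup norm $\tilde{\cM}$ so that the kernel is uniformly integrable, but this is the same mechanism), and for small $p$ a near/far splitting of the cone at a threshold depending on $u_0$, $v_0$, with the far zone handled by the $\frac{p+1}{p}$, $p+1$ H\"older split and the direct flux bound.

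One step as written would fail: you invoke Lemma \ref{lem:bd:vga:smallp:in} with $\a=p$ on the far zone, but that lemma is stated and proved only for $0\leq \a<1$ (its proof uses $\a\ga<1$ in the final $s$-integration, and the asserted bound $C(R-t_0)^{-\a\ga}$ is not valid for $\a\geq \ga^{-1}$). The paper's fix is to first use $v_*\geq v_0$ to peel off $((1-\tau)u_*^{\ga}+v_*^{\ga})^{-(p-1)-\ep/\ga}\leq v_0^{-(p-1)\ga-\ep}$ from the integrand, leaving the lemma to be applied with $\a=1-\ep\ga^{-1}<1$; the harmless $\ep$-loss is then absorbed into the exponent book-keeping. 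With that correction, and with the threshold taken as $t_*=u_0^{\frac{2(3-p)+(p-1)\ga}{5-p}}v_0^{\frac{2(3-p)}{5-p}}$ so that the two branches balance to produce $u_0^{-1+\b}v_0^{-1+\a_p\ga_0}$, your plan coincides with the paper's proof.
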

\begin{proof}
The proof for this Proposition relies on the representation formula for linear wave equation. The nonlinearity will be controlled by using the weighted flux bound $\mathcal{I}_{\ga}$. Recall that for any $q=(t_0, x_0)\in \mathcal{J}^{+}(\B_{R})$, we have the representation formula for the solution $\phi$
\begin{equation}
\label{eq:rep4phi:comp}
\begin{split}
4\pi\phi(t_0, x_0)&=
4\pi\phi^{lin}(t_0, x_0)
-\int_{\mathcal{N}^{-}(q)}\Box \phi \ \tilde{r} d\tilde{r}d\tilde{\om}.
\end{split}
\end{equation}
We mainly need to control the nonlinear part. For the decay estimate of \eqref{eq:phi:pt:Br:EM:largep} for the larger $p$ case, by the definition of $\mathcal{I}_{\ga}$, we can estimate that
\begin{align*}
  &|\int_{\mathcal{N}^{-}(q)}\Box \phi \ \tilde{r} d\tilde{r}d\tilde{\om}|\\
  &\leq \int_{\mathcal{N}^{-}(q)}\La^{3-p}|\phi|^{p}\ \tilde{r} d\tilde{r}d\tilde{\om}\\
  &\leq \left(\int_{\mathcal{N}^{-}(q)} v_*^{\ga}\La^{3-p}|\phi|^{p+1}\ \tilde{r}^{2} d\tilde{r}d\tilde{\om}\right)^{\frac{p-1+\ep}{p+1}}\left(\int_{\mathcal{N}^{-}(q)}v_*^{-\frac{p-1+\ep}{2-\ep}\ga}\La^{3-p}|\phi|^{\frac{(p+1)(1-\ep)}{2-\ep}}\ \tilde{r}^{\frac{3-p-2\ep}{2-\ep}} d\tilde{r}d\tilde{\om}\right)^{\frac{2-\ep}{p+1}}\\ 
  &\leq \mathcal{I}_{\ga}^{\frac{p-1+\ep}{p+1}}\left(\int_{0}^{t_0} \tilde{\cM}(t)^{ 1-\ep }\int_{\S_{(t_0-\tilde{r}, x_0)(\tilde{r})}}(R-t)^{-\frac{(p+1)(\ga+1)}{2(2-\ep)}} \La^{3-p}v_*^{-\frac{p-1+\ep}{2-\ep}\ga}\tilde{r}^{\frac{3-p-2\ep}{2-\ep}}d\tilde{\om} d\tilde{r} \right)^{\frac{2-\ep}{p+1}}.
\end{align*}
Here under the coordinates $(\tilde{t}, \tilde{x})$, we have $t=t_0-\tilde{r}$ and 
\begin{align*}
\tilde{\cM}(t)&=\sup\limits_{|x|\leq R-t}|(R-t)^{\frac{1+\ga}{2}}\phi|^{\frac{p+1}{2-\ep}},  \quad \forall 0\leq t\leq R.
\end{align*}
By the assumption $(p-1)(3-\ga)>4$, $p>1$, $0<\ga<1$,  we can choose $\ep>0$ such that 
\begin{align*}
&p-3-\frac{p-1+\ep}{2-\ep}\ga=\frac{(p-1)(2-\ga)}{2}-2-\frac{(p+1)\ep}{2(2-\ep)}> -1,\\
&\frac{3-p-2\ep}{2-\ep}+p-3-\frac{p-1+\ep}{2-\ep}\ga+1=\frac{(p-1)(1-\ga)-(p+\ga)\ep}{2-\ep}>0,\\
& \frac{p-1+\ep}{2-\ep}\ga-\frac{(p+1)(1+\ga)}{2(2-\ep)}=\frac{(p-3+2\ep)\ga-p-1}{2(2-\ep)}<0,\\
& p-3 + \frac{3-p-2\ep}{2-\ep}+p-2-\frac{p-1+\ep}{2-\ep}\ga=\frac{(3-\ga)(p-1)-4-(2p-3+\ga)\ep}{2-\ep}>0,\\
&p-2+\frac{3-p-2\ep}{2-\ep}-\frac{(p+1)(1+\ga)}{2(2-\ep)}<0 ,  \qquad \textnormal{if}  \quad p<3.
\end{align*}
Since $v_*=R-t-r$, by using Lemma \ref{lem:bd:vga}, we can bound that
\begin{align*}
  \int_{\S_{(t_0-\tilde{r}, x_0)(\tilde{r})}} v_*^{p-3-\frac{p-1+\ep}{2-\ep}\ga} d\tilde{\om}\les \left((R-t)^{-1}(R-t_0)\tilde{r}\right)^{p-3-\frac{p-1+\ep}{2-\ep}\ga}.
\end{align*}
Define
\begin{align*}
\tilde{f}(t_0, \tilde{r})=  (R-t_0)^{\frac{(1+\ga)(p+1)}{2(2-\ep)}+p-3-\frac{p-1+\ep}{2-\ep}\ga} (R-t_0+\tilde{r})^{ \frac{p-1+\ep}{2-\ep}\ga-\frac{(p+1)(1+\ga)}{2(2-\ep)}} \tilde{r}^{\frac{3-p-2\ep}{2-\ep}+p-3-\frac{p-1+\ep}{2-\ep}\ga} .
\end{align*}
We then conclude that
\begin{align*}
 |\phi(t_0, x_0)|^{\frac{p+1}{2-\ep}}&\les |\phi^{lin}(t_0, x_0)|^{\frac{p+1}{2-\ep}}+ \int_{0}^{t_0}\tilde{\cM}(t_0-\tilde{r} )^{1-\ep} (R-t_0)^{-\frac{(1+\ga)(p+1)}{2(2-\ep)}} \tilde{f}(t_0, \tilde{r}) d\tilde{r},
\end{align*}
which implies that
\begin{align}
\label{eq:cM:0}
\tilde{\cM}(t_0)&\les \sup\limits_{|x_0|\leq R-t_0}|\phi^{lin}(t_0, x_0)(R-t_0)^{\frac{1+\ga}{2}}|^{\frac{p+1}{2-\ep}} + \int_{0}^{t_0}\tilde{\cM}(t_0-\tilde{r} )^{1-\ep}  \tilde{f}(t_0,\tilde{r}) d\tilde{r}.
\end{align}
We now check that $\tilde{f}(t_0, \tilde{r})$ is integrable on $[0, t_0]$ with respect to $\tilde{r}$. For the case when $p\geq 3$, by the choice of $\ep$, we can bound that 
\begin{align*}
  \int_0^{t_0}\tilde{f}(t_0, \tilde{r})d\tilde{r}\les \int_0^{t_0} (R-t_0)^{p-3}\tilde{r}^{ \frac{3-p-2\ep}{2-\ep}+p-3-\frac{p-1+\ep}{2-\ep}\ga}d\tilde{r}\les  t_0^{\frac{(p-1)(1-\ga)-(p+\ga)\ep}{2-\ep}}\les 1.
\end{align*}
Here the implicit constant relies only on $p$, $\ga$ and $R$.
For the case when $p<3$, split the integral into two parts. For the integral on $[0, \min\{R-t_0, t_0\}]$, similarly we have 
\begin{align*}
  \int_0^{ \min\{R-t_0, t_0\}} \tilde{f}(t_0, \tilde{r})d\tilde{r} &\leq \int_0^{\min\{R-t_0, t_0\} }(R-t_0)^{ p-3 }   \tilde{r}^{\frac{3-p-2\ep}{2-\ep}+p-3-\frac{p-1+\ep}{2-\ep}\ga} d\tilde{r} \\
  &\les (R-t_0)^{ p-3 + \frac{3-p-2\ep}{2-\ep}+p-2-\frac{p-1+\ep}{2-\ep}\ga}\les 1.
\end{align*}
For the remaining part with $p<3$, we have 
\begin{align*}
  \int_{\min\{R-t_0, t_0\}}^{t_0}\tilde{f}(t_0, \tilde{r}) d\tilde{r}&\leq \int_{\min\{R-t_0, t_0\}}^{t_0} (R-t_0)^{\frac{(1+\ga)(p+1)}{2(2-\ep)}+p-3-\frac{p-1+\ep}{2-\ep}\ga}  \tilde{r}^{\frac{3-p-2\ep}{2-\ep}+p-3 -\frac{(p+1)(1+\ga)}{2(2-\ep)}}   d\tilde{r}\\
  &\les   (R-t_0)^{ \frac{(1+\ga)(p+1)}{2(2-\ep)}+p-3-\frac{p-1+\ep}{2-\ep}\ga+\frac{3-p-2\ep}{2-\ep}+p-2 -\frac{(p+1)(1+\ga)}{2(2-\ep)}} \les 1.
\end{align*}
In view of \eqref{eq:cM:0}, a bootstrap argument then implies that 
\begin{align*}
  |\phi(t_0, x_0)|\les \sup\limits_{|x|\leq R-t_0} |\phi^{lin}(t_0, x)|.
\end{align*}
This shows the bound \eqref{eq:phi:pt:Br:EM:largep}.

\bigskip

Next for estimate \eqref{eq:phi:pt:Br:EM:smallp} with small power $2<p\leq \frac{1+\sqrt{17}}{2}$, we control the nonlinearity directly by using the weighted flux for large $\tilde{r}$. We split the integral into two parts: specifically for the smaller $\tilde{r}$ on $[0, t_*]$ and larger $\tilde{r}$ on $[t_*, t_0]$ with some $0<t_*\leq t_0$ depending on $R$, $p$, $t_0$ and $r_0$. 
 For the integral on $[t_*, t_0]$, by the definition of $\mathcal{I}_{\ga}$, we can show that
\begin{align*}
  &|\int_{\mathcal{N}^{-}(q)\cap\{\tilde{r}\geq t_*\}}\Box \phi \ \tilde{r} d\tilde{r}d\tilde{\om}|\\
  &\leq \left(\int_{\mathcal{N}^{-}(q)} ((1-\tau)u_*^{\ga}+v_*)^{\ga})\La^{3-p}|\phi|^{p+1}\ \tilde{r}^{2} d\tilde{r}d\tilde{\om}\right)^{\frac{p}{p+1}}\\
  &\quad \cdot \left(\int_{\mathcal{N}^{-}(q)\cap\{\tilde{r}\geq t_*\}}((1-\tau)u_*^{\ga}+v_*)^{\ga})^{-p}\La^{3-p} \tilde{r}^{1-p} d\tilde{r}d\tilde{\om}\right)^{\frac{1}{p+1}}\\
  &\les \left(\int_{t_*}^{t_0} \int_{\S_{(t_0-\tilde{r}, x_0)}(\tilde{r})}v_*^{p-3} ((1-\tau)u_*^{\ga}+v_*)^{\ga})^{-p}  u_*^{p-3} \tilde{r}^{1-p} d\tilde{\om} d\tilde{r}\right)^{\frac{1}{p+1}}.
\end{align*}
 Since $v_*\geq  v_0$, by using Lemma \ref{lem:bd:vga:smallp:in}, we then can show that
\begin{align*}
  &|\int_{\mathcal{N}^{-}(q)\cap\{\tilde{r}\geq t_*\}}\Box \phi \ \tilde{r} d\tilde{r}d\tilde{\om}|^{p+1}\\
  &\les v_0^{p-3-(p-1)\ga-\ep}\int_{t_*}^{t_0}(R-t)^{p-3}\tilde{r}^{1-p}  \int_{\S_{(t_0-\tilde{r}, x_0)}(\tilde{r})}((1-\tau)u_*^{\ga}+v_*)^{\ga})^{-1+\ep \ga^{-1}} d\tilde{\om} d\tilde{r}\\
  &\les v_0^{p-3-(p-1)\ga-\ep} (R-t_0)^{-\ga+\ep} \int_{t_*}^{t_0}(R-t_0)^{p-3}\tilde{r}^{1-p}  d\tilde{r}\\
  &\les v_0^{p-3-(p-1)\ga-\ep} (R-t_0)^{p-3-\ga+\ep} t_*^{2-p}
\end{align*}
for all $\ep>0$. Here we recall that $p>2$. Choose $\ep$ and $t_*$ such that 
\begin{align}
\label{eq:assumption:t1}
v_0^{p-3-(p-1)\ga-\ep} (R-t_0)^{p-3-\ga+\ep} t_*^{2-p}\les (v_0^{-1+\a_p\ga_0}u_0^{-1+\b})^{p+1}.
\end{align}
Here recall that $0\leq \b< \frac{p-1}{p+1}$. 
We therefore can derive that
\begin{align*}
  |\phi(t_0, x_0)|&\les |\phi^{lin}(t_0, x_0)|+|\int_{\mathcal{N}^{-}(q) }\Box \phi \ \tilde{r} d\tilde{r}d\tilde{\om}| \\
  &\les  |\phi^{lin}(t_0, x_0)|+u_0^{-1+\b } v_0^{-1+\a_p \ga_0}+|\int_{\mathcal{N}^{-}(q)\cap\{\tilde{r}\leq t_*\}}\Box \phi  \tilde{r} d\tilde{r}d\tilde{\om}|.
\end{align*}
Similar to the large $p$ case, we use bootstrap argument to control the integral on $[0, t^*]$. First we have
\begin{align*}
  &|\int_{\mathcal{N}^{-}(q)\cap\{\tilde{r}\leq t_*\}}\Box \phi \ \tilde{r} d\tilde{r}d\tilde{\om}|\\
  &\les  \left(\int_{\mathcal{N}^{-}(q) \cap\{\tilde{r}\leq t_*\}} (v_{*}^{\ga}+(1-\tau)u_{*}^{\ga})^{-\frac{p-1+\ep}{2-\ep}}\La^{3-p}|\phi|^{\frac{(p+1)(1-\ep)}{2-\ep}}\ \tilde{r}^{\frac{3-p-2\ep}{2-\ep}} d\tilde{r}d\tilde{\om}\right)^{\frac{2-\ep}{p+1}} 
\end{align*}
for all $\ep\geq 0$. Now assume that for some constant $M$, the following bootstrap assumption
\begin{align*}
  |\phi(t, x)|u_*^{1-\b}v_*^{ 1-\a_p \ga_0}\leq 2M ,\quad t+|x|\leq R. 
\end{align*}
holds. 
As $\ga_0<p-1$ and $p<3$, it can be checked that
\begin{align*}
  \b<\frac{p-1}{p+1}\ga_0  \leq 1,\quad 1-\a_p\ga_0-\frac{3-p}{(5-p)}>0.
\end{align*}
Notice that $v_*\geq v_0$, $u_*\geq u_0$.
By using Lemma \ref{lem:bd:vga:smallp:in} and the above bootstrap assumption, we can estimate  
\begin{align*}
& \int_{\mathcal{N}^{-}(q) \cap\{\tilde{r}\leq t_*\}} (v_{*}^{\ga}+(1-\tau)u_{*}^{\ga})^{-\frac{p-1+\ep}{2-\ep}}\La^{3-p}|\phi|^{\frac{(p+1)(1-\ep)}{2-\ep}}\ \tilde{r}^{\frac{3-p-2\ep}{2-\ep}} d\tilde{r}d\tilde{\om}\\
  & \les \int_0^{t_*} \int_{\S_{(t_0-\tilde{r}, x_0)}(\tilde{r})}((1-\tau)u_*^{\ga}+v_*^\ga)^{-\frac{p-1+\ep}{2-\ep}}\La^{3-p} (2M u_*^{-1+\b}v_*^{ -1+\a_p \ga_0})^{\frac{(p+1)(1-\ep)}{2-\ep}} \tilde{r}^{\frac{3-p-2\ep}{2-\ep}}  d\tilde{\om} d\tilde{r}\\
   & \les \int_0^{t_*}  (R-t_0)^{-\frac{p-1+\ep}{2-\ep}\ga}u_0^{p-3}v_0^{p-3}(2M u_0^{-1+\b}v_0^{ -1+\a_p \ga_0})^{\frac{(p+1)(1-\ep)}{2-\ep}} \tilde{r}^{\frac{3-p-2\ep}{2-\ep}}    d\tilde{r}\\
  &\les   (R-t_0)^{-\frac{p-1+\ep}{2-\ep}\ga}u_0^{p-3}v_0^{p-3}(2M u_0^{-1+\b}v_0^{ -1+\a_p \ga_0})^{\frac{(p+1)(1-\ep)}{2-\ep}} t_{*}^{\frac{5-p-3\ep}{2-\ep}} \\
  &\les (M^{1-\ep} u_0^{-1+\b}v_0^{ -1+\a_p \ga_0})^{\frac{(p+1) }{2-\ep}}   
\end{align*}
if we choose $t_*$ to be  
\begin{align*}
 (R-t_0)^{-\frac{p-1+\ep}{2-\ep}\ga}u_0^{p-3}v_0^{p-3}(  u_0^{1-\b}v_0^{ 1-\a_p \ga_0})^{\frac{(p+1)\ep}{2-\ep}} t_{*}^{\frac{5-p-3\ep}{2-\ep}} =1,
\end{align*}
that is, 
\begin{align*}
   t_{*} =u_0^{\frac{2-\ep}{5-p-3\ep}\left(\frac{p-1+\ep}{2-\ep}\ga+3-p-(1-\b)\frac{p+1}{2-\ep}\ep\right)} v_0^{\frac{2-\ep}{5-p-3\ep}\left( 3-p-(1-\a_p \ga_0 )\frac{p+1}{2-\ep}\ep\right)}. 
\end{align*}
We see that if $\ep$ is sufficiently small, depending only on $p$, $\ga_0$, then $t_*$ is a product of  positive powers of $u_0$ and $v_0$.
Now we checked that such $t_*$ verifies the inequality \eqref{eq:assumption:t1}.
Dropping the error terms containing $\ep$, it suffices to show that 
\begin{align*}
v_0^{p-3-(p-1)\ga +(1-\a_p \ga_0)(p+1)-\frac{2(p-2)(3-p)}{5-p}-C_1\ep} u_0^{p-3-\ga +(1-\b)(p+1)-\frac{2(p-2)(\frac{p-1}{2}\ga+3-p)}{5-p}-C_2\ep }\les 1
\end{align*}
for sufficiently small $\ep>0$. Here $C_1$, $C_2$ are constants depending only on $p$. Note that $\ga>2-\ga_0$. The above inequality requires that 
\begin{align*}
v_0^{\frac{2(p-2)(3-p)}{5-p}(\ga_0-1)-C_1\ep -(p-1)(\ga+\ga_0-2)} u_0^{-\frac{2(p-2)(3-p)}{5-p}(\ga_0-1)-C_2 \ep +(\frac{p-1}{p+1}\ga_0-\b)(p+1)-2(\ga+\ga_0-2)}\les 1.
\end{align*}
Recall that 
\[
\b<\frac{p-1}{p+1}\ga_0, \quad \ga>2-\ga_0,\quad  2<p<3, \quad \ga_0>1,\quad  v_0\leq u_0 \leq R.
\] 
The previous inequality is valid if $\ep$ and $\ga+\ga_0-2$ are sufficiently small. 

Now combining the above estimates, we derive that 
\begin{align*}
   |\phi(t_0, x_0)| \les    |\phi^{lin}(t_0, x_0)| +u_0^{-1+\b } v_0^{-1+\a_p \ga_0}+ M^{1-\ep} u_0^{-1+\b}v_0^{ -1+\a_p \ga_0} .
\end{align*}
This implies that 
\begin{align*}
  u_0^{1-\b } v_0^{1-\a_p \ga_0}|\phi(t_0, x_0)| \les    \sup\limits_{t+|x|\leq R} (u_{*}^{1-\b } v_{*}^{1-\a_p \ga_0}|\phi^{lin}(t_0, x_0)|) +1+ M^{1-\ep}. 
\end{align*}
For sufficiently large $M$, we conclude from the above inequality that 
\begin{align*}
u_0^{1-\b } v_0^{1-\a_p \ga_0}|\phi(t_0, x_0)| \leq M,\quad \forall t_0+|x_0|\leq R. 
\end{align*}
We hence improved the bootstrap assumption and the above argument implies that 
\begin{align*}
u_0^{1-\b } v_0^{1-\a_p \ga_0}|\phi(t_0, x_0)| \les \sup\limits_{t+|x|\leq R}(u_{*}^{1-\b } v_{*}^{1-\a_p \ga_0}|\phi^{lin}(t_0, x_0)|) +1 . 
\end{align*}
This finishes the proof for the Proposition.

\end{proof}

\section{The solution in the interior region and proof for the main theorem \ref{thm:main}}
The aim of this section is to apply the result of Proposition \ref{Prop:pointwise:decay:EM} from the previous section together with estimates of Proposition  \ref{prop:NLW:def:3d:ex:ID:H} to show the asymptotic decay properties for solutions to the nonlinear wave equations \eqref{eq:NLW:semi:3d} in the interior region $\{t+2\geq |x|\}$ which is contained inside the region $\mathbf{D}$, enclosed by the forward hyperboloid $\mathbb{H}$ defined in \eqref{eq:def4Hyperboloid}.

Define the conformal map
\begin{align*}
\mathbf{\Phi}:(t, x)\longmapsto (\tilde{t}, \tilde{x})=\left(-\frac{t^*}{(t^*)^2-|x|^2}+R^*,\quad \frac{x}{(t^*)^2-|x|^2}\right)
\end{align*}
from the region $\mathbf{D}$ to Minkowski space.
The image of $\mathbf{\Phi}(\mathbf{D})$ is a truncated backward light cone
\begin{align*}
\mathbf{\Phi}(\mathbf{D})=\left\{(\tilde{t}, \tilde{x})|\quad \tilde{t}+|\tilde{x}|<R^*,\quad \tilde{t}\geq 0\right\}.
\end{align*}
Denote
\begin{equation*}
\Lambda(t, x)=(t^*)^2-|x|^2.
\end{equation*}
Direct computation shows that $\tilde{\phi}=(\Lambda \phi)\circ \mathbf{\Phi}^{-1}$ (as a scalar field in $(\tilde{t}, \tilde{x})$ variables on $\mathbf{\Phi}(\mathbf{D})$) verifies the nonlinear wave equation \eqref{eq:NLW:3D:conf}. For simplicity we may identify $\Lambda \phi$ with $(\Lambda \phi)\circ \mathbf{\Phi}^{-1}$.
The initial hypersurface for the above backward light cone is a ball with radius $R^*$
$$ \mathbf{\Phi}(\mathbb{H}) =\{(0, \tilde{x})||\tilde{x}|\leq R^*\}.$$
By doing this conformal transformation, the Cauchy problem of equation \eqref{eq:NLW:semi:3d} with initial hypersurface $\Hy$ is then equivalent to the Cauchy problem of equation \eqref{eq:NLW:3D:conf} with initial hypersurface $\mathbf{\Phi}(\mathbb{H}) $.

\def\PD{\mathbf{\Phi}(\mathbf{D})}
To apply the result of Proposition \ref{Prop:pointwise:decay:EM}, we need to control the weighted energy flux $\mathcal{I}_{\ga}$ and the linear evolution $\tilde{\phi}^{lin}$ on $\mathbf{\Phi}(\mathbf{D})$. Let's first show that $\mathcal{I}_{\ga}$ is uniformly bounded for all $2-\ga_0<\ga<1$. 
\begin{Prop}
  \label{prop:bd:ID:WF}
  Let $\ga\in (2-\ga_0, 1)$. The solution $\tilde{\phi}= \La\phi$ on $\mathbf{\Phi}(\mathbf{D})$ verifies the following bounds
  \begin{align}
    \label{eq:bd:IDSB:WF}
  \sup\limits_{\tilde{q}\in \cJ^+(\PD) }\int_{\mathcal{N}^{-}(\tilde{q})} ( v_*^\ga+(1-\tilde{\tau})u_*^{\ga})   \La^{3-p} |\tilde{\phi}|^{p+1} d\tilde{\sigma} 
  \leq C\mathcal{E}_{0, \ga_0} 
  \end{align}
  for some constant C depending on $\ga_0$, $p$ and $\ep$. 
\end{Prop}
\begin{proof}
This bound essentially follows from Proposition \ref{prop:EF:cone:NW:3d} since the conformal map $\mathbf{\Phi}$ preserves the null curves. Let's fix a point $q=(t_0, x_0)$ in $\mathbf{D}$ with image $\tilde{q}=(\tilde{t}_0, \tilde{x}_0)$ in $\PD$. We use Cartesian coordinates $(s, y)$ or polar coordinates $(s, l, \theta)$ centered at $q$. The tilde ones are the associated symbols on $\PD$. We need to find out the transformation of the surface measure under the conformal map $\mathbf{\Phi}$. By definition, we have 
\begin{align*}
&\tilde{t}=R^{*}-\La^{-1}t^{*}, \quad \tilde{x}= \La^{-1}x, \quad \tilde{\tau}=\tilde{\om}\cdot \frac{\tilde{x}-\tilde{x}_0}{|\tilde{x}-\tilde{x}_0|}=\om\cdot \frac{\tilde{x}-\tilde{x}_0}{|\tilde{x}-\tilde{x}_0|},\quad \tau=\om\cdot\frac{x-x_0}{|x-x_0|}.
\end{align*} 
On the null hypersurface $\mathcal{N}^{-}(q)$, one has $t-t_0+|x-x_0|=0$. Similarly on $\mathcal{N}^{-}(\tilde{q})$, we have 
\begin{align*}
\tilde{t}^*-\tilde{t}_0^*+|\tilde{x}-\tilde{x}_0|=0. 
\end{align*}
In particular we have the surface measure
\begin{align*}
& d\tilde{\sigma}= \sqrt{2}d\tilde{x},\quad  d \sigma= \sqrt{2}dx.  
\end{align*}
By definition and making use of the relation $t-t_0+|x-x_0|=0$, we compute that 
\begin{align*}
d\tilde{x}_i&= \La^{-1} dx_i -\La^{-2} x_i (2t^* dt^*-2\sum\limits_{j=1}^{3}x_j d x_j)\\
&=\La^{-1} dx_i+\La^{-2} x_i ( 2t_0^* d|x-x_0|+2 x_{0j} dx_j)\\
&= \La^{-1}dx_i +2\La^{-2}x_i ( t_0^{*}|x-x_0|^{-1} (x-x_0)+x_0)dx.
\end{align*}
Therefore we derive that 
\begin{align*}
d\tilde{x}&= \La^{-3} dx +2\La^{-4}  (t_0^{*}|x-x_0|^{-1} x\cdot (x-x_0)+x\cdot x_0)dx\\
&= \La^{-4} (\La +2 t_0^{*}|x-x_0|^{-1} x\cdot (x-x_0)+2 x\cdot x_0)dx.
\end{align*}
From the relation $t-t_0+|x-x_0|=0$, we conclude that 
\begin{align*}
2x\cdot x_0= -(t^*-t_0^*)^2 +r^2+r_0^2.
\end{align*}
Here recall that $t^*=t+3$. Hence we compute that 
\begin{align*}
\La +2 t_0^{*}|x-x_0|^{-1} x\cdot (x-x_0)+2 x\cdot x_0&=\La +2 t_0^{*}|x-x_0|^{-1}r^2+(-(t^*-t_0^*)^2 +r^2+r_0^2)(1-s^{-1}t_0^*) \\
&=s^{-1}(t^* \La_0-\La  t_0^{*}),\quad s=|x-x_0|=t_0-t.
\end{align*}
This leads to the transformation of the surface measure 
\begin{align*}
d\tilde{x}= \La^{-4} s^{-1} | t^* \La_0-\La  t_0^{*}| dx. 
\end{align*}
Next we compute $\tilde{\tau}$. By definition, we can show that 
\begin{align*}
\tilde{\tau}&=\om\cdot \frac{\tilde{x}-\tilde{x}_0}{|\tilde{x}-\tilde{x}_0|}\\
&=(\tilde{t}_0^*-\tilde{t}^*)^{-1} (\La^{-1} r- \La_0^{-1} \om\cdot x_0)\\
&= (\La^{-1}t^* -\La_0^{-1}t_0^*)^{-1} (\La^{-1} r- \La_0^{-1} \frac{r^2+r_0^2-s^2}{2r})\\
&= (2r)^{-1}(\La_0 t^*-\La t_0^*)^{-1}(2\La_0 r^2-\La (r^2+r_0^2-s^2)).
\end{align*}
Here we denote 
\begin{align*}
  v=t^*+r,\quad u= t^*-r. 
\end{align*}
Therefore we can compute that 
\begin{align*}
(1-\tilde{\tau})u_*^{\ga}&= (\La^{-1} (t^*+r))^{\ga}  (2r)^{-1}(\La_0 t^*-\La t_0^*)^{-1} (2r\La_0(t^*-r)+\La(r^2+r_0^2-s^2-2rt_0^*))\\
&=  (2r)^{-1}(\La_0 t^*-\La t_0^*)^{-1} u^{2-\ga} (v-u_0)(v_0-v).  
\end{align*}
On the other hand, note that 
\begin{align*}
1-\tau =1- \frac{r-x_0\cdot \om}{s}=\frac{2rs-2r^2+r_0^2+r^2-s^2}{2rs}=\frac{(r_0+r-s)(r_0-r+s)}{2rs}=\frac{(v-u_0)(v_0-v)}{2rs}.
\end{align*}
This shows that 
\begin{align*}
(1-\tilde{\tau})u_*^{\ga}\La^{3-p}|\tilde{\phi}|^{p+1} d\tilde{\sigma}&= (2r)^{-1}(\La_0 t^*-\La t_0^*)^{-1} u^{2-\ga} 2rs(1-\tau) \La^{3-p}\La^{p+1}|\phi|^{p+1} \La^{-4} s^{-1} | t^* \La_0-\La  t_0^{*}| d\sigma\\
&=(1-\tau)u^{2-\ga} |\phi|^{p+1}d\sigma.
\end{align*}
Similarly we have 
\begin{align*}
(1+\tilde{\tau})v_*^{\ga}\La^{3-p}|\tilde{\phi}|^{p+1} d\tilde{\sigma} 
=(1+\tau)v^{2-\ga} |\phi|^{p+1}d\sigma. 
\end{align*}
Since $0<u\leq v$, $v_*\leq u_*$, $|\tilde{\tau}|\leq 1$, in view of the weighted energy estimate of Proposition \ref{prop:EF:cone:NW:3d}, we thus conclude that 
 \begin{align*}
    \int_{\mathcal{N}^{-}(\tilde{q})} ( v_*^\ga+(1-\tilde{\tau})u_*^{\ga})   \La^{3-p} |\tilde{\phi}|^{p+1} d\tilde{\sigma}  \leq 2  \int_{\mathcal{N}^{-}( q) \cap \mathbf{D}} ( u^{2-\ga}+(1+\tau)v^{2-\ga})     | \phi|^{p+1} d\sigma
  \les \mathcal{E}_{0, \ga_0} 
  \end{align*}
  for all $1>\ga>2-\ga_0$. In particular estimate \eqref{eq:bd:IDSB:WF} holds and we finished the proof for the proposition. 

  \end{proof}

Now we can finish the proof for the main Theorem \ref{thm:main}  by showing the pointwise decay estimates for the solution $\phi$ to \eqref{eq:NLW:semi:3d} in the interior region.  As indicated previously,  $\tilde{\phi}=\La\phi$ solves equation \eqref{eq:NLW:3D:conf} on the compact region $\mathbf{\Phi}(\mathbf{D})$ for solution $\phi$ to \eqref{eq:NLW:semi:3d}. The above Proposition shows that $\mathcal{I}_{\ga}$ is finite for all $2-\ga_0<\ga<1$. Hence we can apply Proposition \ref{Prop:pointwise:decay:EM}.
Setting
\begin{equation*}
 \ga=2-\ga_0+\ep
\end{equation*}
with 
\begin{equation*}
0<\ep<10^{-1}\min\{\ga_0-1, 2-\ga_0, |\ga_0+1-\frac{4}{p-1}|\}.
\end{equation*}
By our assumption on $\ga_0$, we in particular have $0<\ga<1$.

 For the case when
$$\frac{1+\sqrt{17}}{2}<p<5, \quad \max\{\frac{4}{p-1}-1, 1\}<\ga_0<\min\{p-1, 2\},$$
the choice of $\ep$ also implies that
\begin{align*}
  (p-1)(3-\ga)=(p-1)(1+\ga_0-\ep)>4.
\end{align*}
Then from Proposition \ref{Prop:pointwise:decay:EM}, we conclude that
\begin{align*}
  |\La\phi|(\tilde{t}, \tilde{x})&\les  \sup\limits_{|\tilde{y}|\leq \tilde{t}}|\tilde{\phi}^{lin}(\tilde{t}, \tilde{y})|.
\end{align*}
Here $\tilde{\phi}^{lin}$ is the linear evolution with initial data $(\tilde{\phi}(0, \tilde{x}), \tilde{\pa}_{\tilde{t}}\tilde{\phi}(0, \tilde{x}))$.
By the conformal transformation, $\tilde{\phi}^{lin}$ can be identified with $\La \phi_H^{lin}$, in which $\phi_H^{lin}$ was defined before Proposition \ref{prop:NLW:def:3d:ex:ID:H}.
Recall that
\[
\tilde{t}=R^*-\La^{-1}(t+3),\quad |\tilde{x}|=\La^{-1}r, \quad \La =(t+3-r)(t+3+r).
\]
And inside the hyperboloid $\mathbb{H}$, we have $v_+\leq t+3$. Thus
\begin{align*}
 \frac{1}{8}u_+^{-1}&\leq  R^*-\tilde{t}=\La^{-1}(t+3)\leq u_+^{-1},\\
 \frac{1}{4}v_+^{-1}&\leq  R^*-\tilde{t}-|\tilde{x}|=\La^{-1}(t+3-r)=(t+3+r)^{-1}\leq  v_+^{-1}.
\end{align*}
Therefore by using the decay estimate \eqref{eq:phi:pt:Br:largep:lin} of Proposition \ref{prop:NLW:def:3d:ex:ID:H}, we then conclude that
\begin{align*}
|\tilde{\phi}^{lin}(\tilde{t}, \tilde{x})|&\les \La |\phi^{lin}_H|\les \sqrt{\mathcal{E}_{1, \ga_0} } (2+t+|x|)^{-1}(2+||x|-t|)^{-\frac{\ga_0-1}{2}}\La\\
&\les \sqrt{\mathcal{E}_{1, \ga_0} }  (R^*-\tilde{t})^{-\frac{1+\ga_0}{2}},
\end{align*}
which leads to
\begin{align*}
  |\phi(t, x)|\les \sqrt{\mathcal{E}_{1, \ga_0} }  \La^{-1}(R^*-\tilde{t})^{-\frac{1+\ga_0}{2}} \les \sqrt{\mathcal{E}_{1, \ga_0} } v_+^{-1}u_+^{-\frac{\ga_0-1}{2}}.
\end{align*}
This proves the pointwise decay estimate for the solution in the interior region for the large $p$ case.

Finally for the small $p$ case, take
\[
\b=\frac{\ga_0}{p+1}.
\]
The small positive constant $\ep$ can be chosen so that $\b\leq \frac{p-1}{p+1}\ga_0-1$ as $\ga_0>1$.
Then by using the linear decay estimate \eqref{eq:phi:pt:Br:smallp:lin} of Proposition \ref{prop:NLW:def:3d:ex:ID:H}, we can show that
\begin{align*}
  |\tilde{\phi}^{lin}u_*^{1-\b} v_*^{1-\a_p \ga_0}|&\les |\La \phi_{H}^{lin} u_+^{\b-1} v_+^{-1+\a_p \ga_0}|\\
  &\les \sqrt{\mathcal{E}_{1, \ga_0}  } u_+ v_+ v_+^{-\a_p \ga_0}u_+^{-\b}u_+^{\b-1} v_+^{-1+\a_p \ga_0}\\
  &\les \sqrt{\mathcal{E}_{1, \ga_0}  }.
\end{align*}
Here recall that $u_*=R^*-\tilde{t}+|\tilde{x}|$, $v_*=R^*-\tilde{t}-|\tilde{x}|$. Hence from estimate \eqref{eq:phi:pt:Br:EM:smallp} of Proposition \ref{Prop:pointwise:decay:EM}, we conclude that
\begin{align*}
  |\tilde{\phi}(\tilde{t}_0, \tilde{x}_0)|&\les (1+\sup\limits_{\tilde{t}+|\tilde{x}|\leq R*}|\tilde{\phi}^{lin}u_*^{1-\b} v_*^{1-\a_p \ga_0}|)(R^*-\tilde{t}_0)^{-1+\b} (R^*-\tilde{t}_0-|\tilde{x}_0|)^{-1+\a_p\ga_0}\\
  &\les (1+\sqrt{\mathcal{E}_{1, \ga_0} }) (R^*-\tilde{t}_0)^{-1+\b} (R^*-\tilde{t}_0-|\tilde{x}_0|)^{-1+\a_p\ga_0},
\end{align*}
which implies that in the interior region for the case when $2<p\leq \frac{1+\sqrt{17}}{2}$, the solution $\phi$ of \eqref{eq:NLW:semi:3d} verifies the following decay estimate
\begin{align*}
  |\phi(t, x)|\leq |\La^{-1}\tilde{\phi}(\tilde{t}, \tilde{x})|\les (1+\sqrt{\mathcal{E}_{1, \ga_0} }) u_+^{-1}v_+^{-1}u_+^{1-\b} v_+^{1-\a_p\ga_0}\les (1+\sqrt{\mathcal{E}_{1, \ga_0} }) u_+^{-\b} v_+^{-\a_p\ga_0}.
\end{align*}
Here the implicit constant replies on $\mathcal{E}_{0, \ga_0} $, $\ga_0$ and $p$.
We thus complete the proof for the main Theorem \ref{thm:main}.

\bigskip

As for the scattering result of Corollary \ref{cor:scattering:3D}, by using the standard energy estimate, the solution scatters in energy space if the mixed norm $\|\phi\|_{L_t^p L_x^{2p}}$ of the solution is finite (see e.g. Lemma 4.4 in \cite{Strauss78:NLW}). Moreover, it has been shown in the author's companion paper, the solution scatters in $\dot{H}^{s}$ for all $\frac{3}{2}-\frac{2}{p-1}\leq s\leq 1$ for the case when $p>\frac{1+\sqrt{17}}{2}$. In particular, it suffices to consider the small $p$ case when $2<p\leq \frac{1+\sqrt{17}}{2}$. By using the pointwise decay estimate of the main theorem \ref{thm:main}, we estimate that
\begin{align*}
  \|\phi\|_{L_t^p L_x^{2p}}^p&=\int_{\mathbb{R}} \left(\int_{\mathbb{R}^3} |\phi|^{p+1}v_+^{\ga_0-1-\ep}|\phi|^{p-1}v_+^{-\ga_0+1+\ep}dx\right)^{\f12} dt \\
  &\les \int_{\mathbb{R}} \left(\int_{\mathbb{R}^3} |\phi|^{p+1}v_+^{\ga_0-1-\ep}(1+t)^{-\ga_0+1+\ep-(p-1)\a_p \ga_0}dx\right)^{\f12} dt\\
  &\les \left(\int_{\mathbb{R}}\int_{\mathbb{R}^3} |\phi|^{p+1}v_+^{\ga_0-1-\ep}dx dt\right)^{\f12}\left(\int (1+|t|)^{-\ga_0+1+\ep-(p-1)\a_p \ga_0}dt\right)^{\f12}.
\end{align*}
In view of the uniform spacetime bound of Proposition \ref{prop:spacetime:bd}, $\|\phi\|_{L_t^p L_x^{2p}}$ is finite if
\begin{align*}
  \ga_0-1+(p-1)\a_p \ga_0>1
\end{align*}
by choosing $\ep$ sufficiently small. As $1<\ga_0<p-1$, by choosing $\ga_0$ sufficiently close to $p-1$, it is equivalent to that
\begin{align*}
  f(p)=p-2+(p-1)^2 \frac{3+(p-2)^2}{(5-p)(p+1)}-1>0.
\end{align*}
It can be checked that there is a unique solution $p_*$ of $f(p)$ on $[2, 3]$ and when $p>p_*$, one has $f(p)>0$. Numerically, one can show that
\[
2.3541<p_*<2.3542.
\]
This proves the scattering result of Corollary \ref{cor:scattering:3D}.

\section{Proof for the uniform boundedness of the solution}
In this section, we proof Theorem \ref{thm:main:p2} for the case when $\frac{3}{2}<p\leq 2$. The method used to study the asymptotic decay properties for the solutions with power $p>2$ fails when $p\leq 2$, mainly due to the reason that we are not allowed to use weighted vector fields $X^{\gamma}$ with $\ga<1$. However, we still can use the vector fields $r^{\gamma}L$ as long as $\gamma\leq p-1$. To show the uniform boundedness of the solution, we begin with a weighted energy estimates through backward light cone obtained by using the vector fields $r^\gamma (\pa_t+\pa_r)$ as multipliers. 
\begin{Prop}
\label{prop:EF:cone:NW:3d:rW}
Assume that $1<p<5$.
Let $q=(t_0, x_0)$ be any point in $\mathbb{R}^{1+3}$. Then for solution $\phi$ of the nonlinear wave equation \eqref{eq:NLW:semi:3d} and for all $0\leq \ga\leq  \min\{ p-1, 2\}$, we have the following uniform bound
\begin{equation}
\label{eq:Eflux:ex:EF:rW}
\begin{split}
&\int_{\mathcal{N}^{-}(q)}((1+\tau)r^{\ga}+1)|\phi|^{p+1} d\si 
\leq C \mathcal{E}_{0, \ga}
\end{split}
\end{equation}
for some constant $C$ depending only on $p$. 
\end{Prop}
\begin{proof}
The proof goes similar to that for Proposition \ref{prop:EF:cone:NW:3d} by using the vector field $r^{\ga}L$ as multipliers instead of $X^{\ga}$. The use of the vector field $r^\ga L$ plays a crucial role in \cite{yang:scattering:NLW} for the potential energy decay (see Proposition \ref{prop:spacetime:bd}). Compared to that in \cite{yang:scattering:NLW}, we apply the vector field to  the region $\mathcal{J}^{-}(q)$ instead of regions bounded by null hypersurfaces. Hence it suffices to compute the boundary terms on $\mathcal{N}^{-}(q)$ and we will recall those necessary computations in \cite{yang:scattering:NLW}.

In the energy identity \eqref{eq:energy:id}, choose the vector fields $X$, $Y$ and the function $\chi$ as follows:
\[
X=r^{\gamma} L,\quad Y=\frac{1}{2}\ga r^{\ga-2}|\phi|^2 L, \quad \chi=r^{\ga-1},\quad \forall 0\leq \ga\leq \min\{p-1, 2\}.
\]
For the bulk integral on the right hand side of \eqref{eq:energy:id}, direct computation (or see \cite{yang:scattering:NLW}) shows that
\begin{align*}
&div(Y)+T[\phi]^{\mu\nu}\pi^X_{\mu\nu}+
\chi \pa_\mu\phi \pa^\mu\phi  +\chi\phi\Box\phi-\f12 \Box\chi |\phi|^2\\
&=\f12 r^{\ga-3}(\ga|L(r\phi)|^2+(2-\ga)|\nabb(r\phi)|^2 )+\frac{p-1-\ga }{p+1} r^{\ga-1}|\phi|^{p+1} .
\end{align*}
For the case when $0\leq  \ga \leq \min\{p-1, 2\}$, this term is nonnegative.

Let the domain $\mathcal{D}$ be $\mathcal{J}^{-}(q)$ with boundary $\B_{(0, x_0)}(t_0)\cup \mathcal{N}^{-}(q)$. By using Stokes' formula, the left hand side of the above energy identity consists of the integrals on the initial hypersurface $\B_{(0, x_0)}(t_0)$ and the backward light cone $\mathcal{N}^{-}(q)$.
For the integral on $\B_{(0, x_0)}(t_0)$, recall from \cite{yang:scattering:NLW} that
\begin{align}
\label{eq:PWE:ex:bxt01}
 \int_{\B_{(0, x_0)}(t_0)} i_{J^{X, Y,\chi}[\phi]}d\vol 
&=\f12 \int_{\B_{(0, x_0)}(t_0)}  r^\ga(  r^{-2}|L(r\phi)|^2 +|\nabb\phi|^2+\frac{2|\phi|^{p+1}}{p+1})  -\pa_r  (  r^{1+\ga} |\phi|^2 )r^{-2}dx.
\end{align}
 For the boundary integral on the backward light cone $\mathcal{N}^{-}(q)$, we compute the explicit form under the coordinates centered at the point $q=(t_0, x_0)$.
 Recall the volume form
\[
d\vol=dxdt=d\tilde{x}d\tilde{t}=2\tilde{r}^2 d\tilde{v}d\tilde{u}d\tilde{\om}.
\]
Under these new coordinates $(\tilde{t}, \tilde{x})$, we can compute that
\begin{align*}
-i_{J^{X, Y,\chi}[\phi]}d\vol=J_{\tilde{\Lb}}^{X, Y,\chi}[\phi]\tilde{r}^2d\tilde{u}d\tilde{\om}= ( T[\phi]_{\tilde{\Lb}\nu}X^\nu -
\f12(\tilde{\Lb}\chi) |\phi|^2 + \f12 \chi\cdot\tilde{\Lb}|\phi|^2 +Y_{\tilde{\Lb}}) \tilde{r}^2d\tilde{u}d\tilde{\om}.
\end{align*}
For the main quadratic terms, we have
\begin{align*}
T[\phi]_{\tilde{\Lb}\nu}X^\nu =T[\phi]_{\tilde{\Lb}\tilde{\Lb}}X^{\tilde{\Lb}}+T[\phi]_{\tilde{\Lb}\tilde{L}}X^{\tilde{L}}+T[\phi]_{\tilde{\Lb}\tilde{e}_i}X^{\tilde{e}_i}.
\end{align*}
Now we need to write the vector field $X$ under the new null frame $\{\tilde{L}, \tilde{\Lb}, \tilde{e}_1, \tilde{e}_2\}$ centered at $q$. Note that
\begin{align*}
\pa_r=\om \cdot \nabla=\om \cdot \tilde{\nabla}=\om\cdot \tilde{\om}\pa_{\tilde{r}}+ \om\cdot (\tilde{\nabla}-\tilde{\om}\pa_{\tilde{r}}).
\end{align*}
Hence we can write that
\begin{align*}
X=r^\gamma (\pa_t+\pa_r)
&=r^\ga (\pa_{\tilde{t}}+\om\cdot \tilde{\om}\pa_{\tilde{r}}+ \om\cdot \tilde{\nabb})=\f12 r^\ga(1+\om\cdot \tilde{\om})\tilde{L}+\f12 r^\ga(1-\om\cdot \tilde{\om}) \tilde{\Lb}+r^\ga \om\cdot \tilde{\nabb}.
\end{align*}
Here $\tilde{\nabb}=\tilde{\nabla}-\tilde{\om}\pa_{\tilde{r}}$. Denote $\tau=\om\cdot \tilde{\om}$. Then we can compute the quadratic terms
\begin{align*}
T[\phi]_{\tilde{\Lb}\nu}X^\nu 
=& \f12 (1-\tau)r^\ga |{\tilde{\Lb}}\phi|^2 + \f12 (1+\tau)r^\ga (|\tilde{\nabb}\phi|^2+\frac{2}{p+1}|\phi|^{p+1})+r^\ga  ({\tilde{\Lb}}\phi) (\om\cdot \tilde{\nabb})\phi.
\end{align*}
These terms are nonnegative. Indeed note that
\begin{align*}
\tilde{\Lb}(r)=-\tilde{\om}_i\pa_i(r)=-\tilde{\om}\cdot \om =-\tau,\quad \tilde{\nabb}(r)=(\tilde{\nabla}-\tilde{\om}\pa_{\tilde{r}})(r)=\om-\tilde{\om}\tau.
\end{align*}
Therefore we can write
\begin{align*}
&-\f12 r^2 (\tilde{\Lb}{\chi})|\phi|^2+\f12 r^2\chi \tilde{\Lb}|\phi|^2+r^2 Y_{\tilde{\Lb}}=r^{\ga}( {\tilde{\Lb}}(r\phi)+\tau \phi) \phi+\f12\tau (\ga-1)r^{\ga}|\phi|^2-\f12\ga (1+\tau)r^{\ga}|\phi|^2,\\
&r^2|\tilde{\Lb}\phi|^2=|{\tilde{\Lb}}(r\phi)-\tilde{\Lb}(r)\phi|^2=|{\tilde{\Lb}}(r\phi)|^2+\tau^2|\phi|^2+2 {\tilde{\Lb}}(r\phi) \tau\phi,\\
& r^2|\tilde{\nabb}\phi|^2
=|\tilde{\nabb}(r\phi)|^2+(1-\tau^2)|\phi|^2-2(\om-\tilde{\om}\tau)\cdot\tilde{\nabb}(r\phi) \phi,\\
& r^2 ({\tilde{\Lb}}\phi)  (\om\cdot \tilde{\nabb})\phi={\tilde{\Lb}}(r\phi) (\om \cdot \tilde{\nabb})(r\phi)-\tau(1-\tau^2)|\phi|^2+\phi \tau(\om\cdot \tilde{\nabb})(r\phi) -(1-\tau^2){\tilde{\Lb}}(r\phi)\phi.
\end{align*}
Notice that
\begin{align*}
  |(\om\cdot \tilde{\nabb})(r\phi)|=|(\om\times \tilde{\om}\cdot \tilde{\nabb})(r\phi)|=\sqrt{1-\tau^2}|\tilde{\nabb}(r\phi)|.
\end{align*}
In particular the quadratic terms are nonnegative
\begin{align*}
&\f12 (1-\tau)r^\ga |\tilde{\Lb}(r\phi)|^2+\f12(1+\tau)r^\ga |\tilde{\nabb}(r\phi)|^2+r^\ga {\tilde{\Lb}}(r\phi) (\om \cdot \tilde{\nabb})(r\phi)\geq 0.
\end{align*}
For the lower order terms, we compute that
\begin{align*}
&\f12 r^{\ga}(1-\tau)(\tau^2|\phi|^2+2{\tilde{\Lb}}(r\phi)\tau\phi )+r^{\ga}( {\tilde{\Lb}}(r\phi)+\tau \phi) \phi+\f12\tau (\ga-1)r^{\ga}|\phi|^2-\f12\ga (1+\tau)r^{\ga}|\phi|^2\\
&+\f12 r^{\ga}(1+\tau) \big((1-\tau^2)|\phi|^2-2(\om-\tilde{\om}\tau)\tilde{\nabb}(r\phi) \phi\big)\\
&+r^{\ga}\big(-\tau(1-\tau^2)|\phi|^2+\phi \tau(\om\cdot \tilde{\nabb})(r\phi)-\phi (1-\tau^2){\tilde{\Lb}}(r\phi) \big)\\
&=\f12(1-\ga)r^{\ga}|\phi|^2
+r^{\ga}(\tau {\tilde{\Lb}}-\om\cdot \tilde{\nabb})(r\phi) \phi\\
&=-\f12 r^2\tilde{r}^{-1} \tilde{\Om}_{ij}(r^{\ga-1} \om_j\tilde{\om}_i |\phi|^2)+\f12 \tilde{r}^{-2}r^2\tilde{\Lb}(r^{\ga-1}\tau \tilde{r}^2 |\phi|^2)\\
&+\f12(1-\ga)r^\ga|\phi|^2-\f12 \tilde{r}^{-2}r^2\tilde{\Lb}(r^{\ga-3}\tau\tilde{r}^2) |r\phi|^2+\f12 r^2 \tilde{r}^{-1}\tilde{\Om}_{ij}(r^{\ga-3} \om_j\tilde{\om}_i) |r\phi|^2.
\end{align*}
Here $\tilde{\Om}_{ij}=\tilde{x}_i\tilde{\pa}_{j}-\tilde{x}_j\tilde{\pa}_{{i}}=\tilde{r}(\tilde{\om}_i\tilde{\pa}_j-\tilde{\om}_j\tilde{\pa}_i)$ and we have omitted the summation sign for repeated indices $i$, $j$ for simplicity. By computations, note that
\begin{align*}
&\tilde{r}^{-1}\tilde{\Om}_{ij}(r^{-3}\om_j\tilde{\om}_i)=-2r^{\ga-4}(1-2\tau^2)-2\tau \tilde{r}^{-1}r^{\ga-3}+\ga r^{\ga-4}(1-\tau^2),\\
&\tilde{r}^{-2}r^{4-\ga}\tilde{\Lb}(r^{\ga-3}\tilde{r}^2\tau)=4\tau^2-1-2r\tilde{r}^{-1}\tau-\ga\tau^2.
\end{align*}
Thus the last line in the previous equality vanishes
\begin{align*}
  &\f12(1-\ga)r^{\ga}|\phi|^2-\f12 \tilde{r}^{-2}r^2\tilde{\Lb}(r^{\ga-3}\tau\tilde{r}^2) |r\phi|^2+\f12 r^2 \tilde{r}^{-1}\tilde{\Om}_{ij}(r^{\ga-3} \om_j\tilde{\om}_i) |r\phi|^2\\
  &=\f12 r^{\ga}|\phi|^2\left(1-\ga-4\tau^2+1+2r\tilde{r}^{-1}\tau+\ga\tau^2-2 (1-2\tau^2)-2\tau \tilde{r}^{-1}r +\ga  (1-\tau^2) \right)\\
  &=0.
\end{align*}
By using integration by parts on the backward light cone $\mathcal{N}^{-}(q)$, the integral of the second last line is
\begin{align*}
  &\int_{\mathcal{N}^{-}(q)}\big(-\f12 r^2\tilde{r}^{-1} \tilde{\Om}_{ij}(r^{\ga-3} \om_j\tilde{\om}_i |r\phi|^2)+\f12 \tilde{r}^{-2}r^2\tilde{\Lb}(  r^{\ga-1}\tau\tilde{r}^2 |\phi|^2)\big)r^{-2}\tilde{r}^2 d\tilde{u}d\tilde{\om}\\
  &= \f12 \int_{\S_{(0, x_0)}(t_0)}r^{\ga-1}\tau \tilde{r}^2 |\phi|^2d\tilde{\om}.
\end{align*}
The above computations show that the quadratic terms are nonnegative and the lower order terms are equal to the above integral on the 2-sphere on the initial hypersurface, that is,
\begin{equation*}
\begin{split}
&-\int_{\mathcal{N}^{-}(q)}i_{J^{X, Y, \chi}[\phi]}d\vol+ \f12 \int_{\S_{(0, x_0)}(t_0) }r^{\ga-1}\tau \tilde{r}^2 |\phi|^2d\tilde{\om}\geq \int_{\mathcal{N}^{-}(q)} \frac{1+\tau}{p+1} r^{\ga}|\phi|^{p+1}  \tilde{r}^2 d\tilde{u}d\tilde{\om}.
\end{split}
\end{equation*}
On the other hand for the case when $0\leq \ga \leq \min\{p-1, 2\}$, the bulk integral on the right hand side of the energy identity \eqref{eq:energy:id} is nonnegative, that is,
\begin{align*}
  \int_{\mathcal{N}^{-}(q)}i_{J^{X, Y, \chi}[\phi]}d\vol+\int_{\B_{(0, x_0)}(t_0)}i_{J^{X, Y, \chi}[\phi]}d\vol\geq 0.
\end{align*}
Adding this estimate to the previous inequality and in view of the expression \eqref{eq:PWE:ex:bxt01}, we conclude that
\begin{align*}
   &\int_{\B_{(0, x_0)}(t_0)}  r^\ga(  r^{-2}|L(r\phi)|^2 +|\nabb\phi|^2+\frac{2}{p+1}|\phi|^{p+1})  -\pa_r  (  r^{1+\ga} |\phi|^2 )r^{-2}dx\\
   &+  \int_{\S_{(0, x_0)}(t_0) }r^{\ga-1}\tau \tilde{r}^2 |\phi|^2d\tilde{\om}\geq 2\int_{\mathcal{N}^{-}(q)} \frac{1+\tau}{p+1} r^{\ga}|\phi|^{p+1}  \tilde{r}^2 d\tilde{u}d\tilde{\om}.
\end{align*}
Note that
\begin{align*}
  &\int_{\B_{(0, x_0)}(t_0)}   \pa_r  (  r^{1+\ga} |\phi|^2 )r^{-2}dx=\int_{\B_{(0, x_0)}(t_0)}  \textnormal{div} (\om r^{\ga-1}|\phi|^2)dx
  =\int_{\S_{(0, x_0)}(t_0) }r^{\ga-1}\tau \tilde{r}^2 |\phi|^2d\tilde{\om}.
\end{align*}
 We therefore derive from the previous inequality that
 \begin{align*}
   2\int_{\mathcal{N}^{-}(q)} \frac{1+\tau}{p+1} r^{\ga}|\phi|^{p+1}  d\sigma\leq  \int_{\B_{(0, x_0)}(t_0)}  r^\ga(  r^{-2}|L(r\phi)|^2 +|\nabb\phi|^2+\frac{2}{p+1}|\phi|^{p+1})  dx \leq \mathcal{E}_{0, \ga}.
 \end{align*}
The uniform bound \eqref{eq:Eflux:ex:EF:rW} of the Proposition then follows by the standard energy estimates obtained by using the vector field $\pa_t$ as multiplier.
\end{proof}

We now make use of the above weighted energy flux bound to prove the uniform boundedness of the solution for $\frac{3}{2}<p\leq 2$.
This relies on the following integration lemma.
 
\begin{Lem}
\label{lem:integration:smallp}
  Assume $1<p\leq 2$ and $0\leq \ga< p-1$. Fix $q=(t_0, x_0)$ in $\mathbb{R}^{1+3}$. For the $2$-sphere $\S_{(t_0-\tilde{r}, x_0)}(\tilde{r})$ on the backward light cone $\mathcal{N}^{-}(q)$, we have
  \begin{equation*}
    \begin{split}
    &\int_{\S_{(t_0-\tilde{r}, x_0)}(\tilde{r})} ((1+\tau)r^{\ga}+1)^{-p} d\tilde{\om} \leq C (1+r_0+\tilde{r})^{-\ga}
    \end{split}
  \end{equation*}
  for some constant $C$ depending only on $p$ and $\ga$.
  Here $\tau=\om\cdot \tilde{\om}$, $r_0=|x_0|$ and $0\leq \tilde{r}\leq t_0 $.
\end{Lem}
\begin{proof}
During the proof the implicit constant in $\les$ may also rely on $\ga$.
Denote $s=-\om_0\cdot \tilde{\om}$ and $\om_0=r_0^{-1}x_0$. By definition, recall that
  \begin{align*}
    &r^2=|x_0+\tilde{x}|^2=\tilde{r}^2+r_0^2-2r_0\tilde{r}s=(\tilde{r}-r_0s)^2+(1-s^2)r_0^2,\\
    &(1+\tau)r=r+r\om\cdot \tilde{\om}=r+(\tilde{x}+x_0)\cdot \tilde{\om}=r+\tilde{r}-r_0s.
  \end{align*}
We can write the integral as
  \begin{align*}
    &\int_{\S_{(t_0-\tilde{r}, x_0)}(\tilde{r})} ((1+\tau)r^{\ga}+1)^{-p} d\tilde{\om} =2\pi\int_{-1}^1 (r^{\ga-1}(r+\tilde{r}-r_0s)+1)^{-p}ds.
  \end{align*}
First we consider the case when the point $q$ locates in the exterior region, that is, $t_0\leq r_0$. The case when $t_0+r_0\leq 10$ is trivial. Thus in the following we always assume that $t_0+r_0\geq 10$. In particular in the exterior region, $r_0\geq 5$.
For the integral on $s\leq 0$, we trivially bound that
  \begin{align*}
    r^{\ga-1}(r+\tilde{r}-r_0s)\geq r_0^{\ga}.
  \end{align*}
  Therefore we can estimate that
  \begin{align*}
    \int_{-1}^0  (r^{\ga-1}(r+\tilde{r}-r_0s)+1)^{-p}ds\leq r_0^{-p\ga}.
  \end{align*}
  Define $s_0=1-(1-\tilde{r}r_0^{-1})^2$. On the interval $[0, s_0]$, note that
  \begin{align*}
    \sqrt{1-s} \ r_0\geq r_0-\tilde{r}.
  \end{align*}
Since $\tilde{r}\leq t_0\leq r_0$ and $0\leq s\leq s_0\leq 1$, we have
  \begin{align*}
    \tilde{r}-r_0s \leq r_0(1-s)\leq r_0\sqrt{1-s},\quad r_0s-\tilde{r}\leq r_0-\tilde{r}\leq r_0\sqrt{1-s}.
  \end{align*}
Define the relation $\sim $ meaning that two quantities are of the same size up to some universal constant, that is, $A\sim B$ means $C^{-1}B\leq A\leq CB$ for some constant $C$. The above computation shows that for $0\leq s\leq s_0$
\begin{align*}
  r\sim |\tilde{r}-r_0s|+\sqrt{1-s^2}r_0\sim \sqrt{1-s}r_0.
\end{align*}
Moreover when $\tilde{r}\geq r_0s$, it trivially has
\begin{align*}
  r+\tilde{r}-r_0s\sim \sqrt{1-s}r_0.
\end{align*}
Otherwise when $\tilde{r}<r_0 s$, note that 
\begin{align*}
  r+\tilde{r}-r_0s=\frac{r^2-(\tilde{r}-r_0s)^2}{r+r_0s-\tilde{r}}=\frac{(1-s^2)r_0^2}{r+r_0s-\tilde{r}}\sim\frac{(1-s^2)r_0^2}{\sqrt{1-s}r_0}\sim \sqrt{1-s}r_0.
\end{align*}
  Therefore on the interval $[0, s_0]$, we can estimate that
  \begin{align*}
    \int_{0}^{s_0}  (r^{\ga-1}(r+\tilde{r}-r_0s)+1)^{-p} ds
    &\les  \int_{0}^{s_0}(\sqrt{1-s} r_0)^{-p\ga} ds \les  r_0^{-p\ga}.
  \end{align*}
  Here we may note that $p\ga < p(p-1)\leq 2$.

  Finally on the interval $[s_0, 1]$, notice that
  \begin{align*}
    r_0s\geq r_0s_0\geq \tilde{r},\quad \sqrt{1-s}r_0\leq r_0-\tilde{r}.
  \end{align*}
  Therefore we have
  \begin{align*}
    r\sim r_0s-\tilde{r}+\sqrt{1-s}r_0=r_0-\tilde{r}+(\sqrt{1-s}-(1-s))r_0\sim r_0-\tilde{r}.
  \end{align*}
  Hence we can estimate that
  \begin{align*}
   r(1+\tau)= r+\tilde{r}-r_0s=\frac{(1-s^2)r_0^2}{r+r_0s-\tilde{r}}\sim \frac{(1-s)r_0^2}{r_0-\tilde{r}}.
  \end{align*}
This leads to the bound 
  \begin{align*}
    &\int_{s_0}^{1}  (r^{\ga-1}(r+\tilde{r}-r_0s)+1)^{-p} ds\\
    &\les  \int_{s_0}^{1}\big(1+ (r_0-\tilde{r})^{\ga-2}(1-s)r_0^2\big)^{-p} ds\\
    &\les (r_0-\tilde{r})^{2-\ga} r_0^{-2} .
  \end{align*}
  Combining the above estimates, we have shown that in the exterior region $\{t_0\leq |x_0|\}$
  \begin{align*}
    \int_{\S_{(t_0-\tilde{r}, x_0)}(\tilde{r})} ((1+\tau)r^{\ga}+1)^{-p} d\tilde{\om} \les r_0^{-p\ga}+(r_0-\tilde{r})^{2-\ga} r_0^{-2}\les (1+r_0+\tilde{r})^{-\ga}.
  \end{align*}
This means that the Lemma holds for the case when $t_0\leq r_0$.

\bigskip

Next we consider the situation in the interior region when $t_0>r_0$ and $t_0+r_0>10$. The integral on $[-1, 0]$ is easy to control. Indeed, when $s\leq 0$, by the expression of $r$ and $\tau$, we have
\begin{align*}
  r\sim \tilde{r}+r_0,\quad r(1+\tau)\sim \tilde{r}+r_0.
\end{align*}
Therefore, we can show that
\begin{align*}
  \int_{-1}^{0}(1+r^{\ga}(1+\tau))^{-p}ds\les \int_{-1}^{0}(1+(\tilde{r}+r_0)^{\ga})^{-p}ds\les (1+\tilde{r}+r_0)^{-p\ga}.
\end{align*}
For the integral on the interval $[0, 1]$, when $\tilde{r}\geq 2r_0$, we have
\begin{align*}
  r\sim \tilde{r}-r_0s+\sqrt{1-s}r_0\sim \tilde{r},\quad
  r+\tilde{r}-r_0s\sim \tilde{r}.
\end{align*}
Since $r_0\leq \f12 \tilde{r}$, we can show that
\begin{align*}
  \int_0^{1}(1+r^{\ga}(1+\tau))^{-p}ds\les \int_0^{1}(1+\tilde{r}^{\ga})^{-p}ds\les (1+\tilde{r}+r_0)^{-p\ga}.
\end{align*}
Now for the case when $r_0\leq \tilde{r}\leq 2 r_0$, similarly $r$ and $r(1+\tau)$ behave like
\begin{align*}
  &r\sim \tilde{r}-r_0s+\sqrt{1-s}r_0\sim \tilde{r}-r_0+\sqrt{1-s}r_0,\\
  & r\leq r(1+\tau)=r+\tilde{r}-r_0s\leq 2r.
\end{align*}
This shows that
\begin{align*}
  \int_0^1 (1+r^{\ga}(1+\tau))^{-p}ds\les \int_0^{1}(1+\sqrt{1-s}r_0)^{-p\ga}ds\les (1+\tilde{r}+r_0)^{-p\ga}.
\end{align*}
Again here we used the assumption that $p\ga<2$ and $\tilde{r}\leq 2r_0$.

Finally when $0\leq \tilde{r}\leq r_0$, we split the integral on $[0, 1]$ into several parts. On $[0, r_0^{-1}\tilde{r}]$, similar to the above case when $r_0\leq \tilde{r}\leq 2r_0$, we can estimate that
\begin{align*}
  \int_0^{r_0^{-1}\tilde{r}} (1+r^{\ga}(1+\tau)^{\ga})^{-p}ds\les \int_0^{r_0^{-1}\tilde{r}}(1+\tilde{r}-r_0+\sqrt{1-s}r_0)^{-p\ga}ds\les (1+\tilde{r}+r_0)^{-p\ga}.
\end{align*}
On $[r_0^{-1}\tilde{r}, 1]$, firstly we have
\begin{align*}
  r&\sim r_0s-\tilde{r}+\sqrt{1-s}r_0,\\
   r(1+\tau)&=\frac{(1-s^2)r_0^2}{r+r_0s-\tilde{r}} \sim\frac{(1-s) r_0^2}{r_0s-\tilde{r}+\sqrt{1-s}r_0}.
\end{align*}
Therefore, we can bound that
\begin{align*}
  \int_{r_0^{-1}\tilde{r}}^{1} (1+r^{\ga}(1+\tau))^{-p}ds \les \int_{r_0^{-1}\tilde{r}}^{1}(1+(r_0s-\tilde{r}+\sqrt{1-s}r_0)^{\ga-2}(1-s)r_0^2)^{-p}ds.
\end{align*}
Notice that
\begin{align*}
\frac{1}{2}(r_0-\tilde{r}+\sqrt{1-s} r_0)\leq r_0s-\tilde{r}+\sqrt{1-s}r_0\leq r_0-\tilde{r}+\sqrt{1-s}r_0,\quad r_0^{-1}\tilde{r}\leq s\leq 1.
\end{align*}
Denote $s_*=1-(1-r_0^{-1}\tilde{r})^2$. In particular, we have
\begin{align*}
  r_0^{-1}\tilde{r}\leq s_*\leq 1.
\end{align*}
On the interval $[s_*, 1]$, we have
\begin{align*}
  \sqrt{1-s}r_0\leq r_0-\tilde{r}.
\end{align*}
Therefore we show that
\begin{align*}
  \int_{s_*}^{1}(1+(1+\tau)r^{\ga})^{-p}ds &\les \int_{s_*}^1(1+(r_0-\tilde{r})^{\ga-2}(1-s)r_0^2)^{-p}ds\\
  &\les (r_0-\tilde{r})^{2-\ga}r_0^{-2}(1-(1+(r_0-\tilde{r})^{\ga})^{1-p})\\
  &\les (1+r_0)^{-\ga}\\
  &\les (1+r_0+\tilde{r})^{-\ga}.
\end{align*}
Otherwise on $[r_0^{-1}\tilde{r}, s_*]$, we can bound that
\begin{align*}
  \int_{r_0^{-1}\tilde{r}}^{s_*}(1+(1+\tau)r^{\ga})^{-p}ds &\les \int^{s_*}_{r_0^{-1}\tilde{r}}(1+(1-s)^{\f12\ga}r_0^{\ga})^{-p}ds\\
  &\les \int^{s_*}_{r_0^{-1}\tilde{r}}(1+(1-s) r_0^{2})^{-\f12 p\ga}ds \\
  &\les r_0^{-2}(1+r_0(r_0-\tilde{r}))^{1-\f12 p\ga }\\
  &\les  (1+\tilde{r}+r_0)^{-p\ga}.
\end{align*}
Here keep in mind that $\tilde{r}\leq r_0$.
The Lemma holds by combining all the above bounds.
\end{proof}
 We now use the above integration lemma as well as the weighted energy estimate of Proposition \ref{prop:EF:cone:NW:3d:rW} to study the asymptotic behavior of the solution in the pointwise sense. 
 By using the standard Sobolev embedding and energy estimate, the linear evolution part in the representation formula \eqref{eq:rep4phi:ex} verifies
\begin{align*}
  |\int_{\tilde{\om}}t_0  \phi_1(x_0+t_0\tilde{\om})d\tilde{\om}+\pa_{t_0}\big(\int_{\tilde{\om}}t_0  \phi_0(x_0+t_0\tilde{\om})d\tilde{\om}   \big)|
  &\les \sqrt{\mathcal{E}_{1, 0}  }.
\end{align*}
To control the nonlinearity, in view of Lemma \ref{lem:integration:smallp} and Proposition \ref{prop:EF:cone:NW:3d:rW}, we can bound that
\begin{align*}
  &|\int_{\mathcal{N}^{-}(q) }|\phi|^{p-1}\phi \ \tilde{r} d\tilde{r}d\tilde{\om}|\\
  &\les \left(\int_{\mathcal{N}^{-}(q)}((1+\tau)r^{\ga}+1)|\phi|^{p+1}\ \tilde{r}^{2} d\tilde{r}d\tilde{\om}\right)^{\frac{p}{p+1}}  \cdot \left(\int_{\mathcal{N}^{-}(q) }((1+\tau)r^{\ga}+1)^{-p} \tilde{r}^{1-p} d\tilde{r}d\tilde{\om}\right)^{\frac{1}{p+1}}\\
  &\les \mathcal{E}_{0, \ga}^{\frac{p}{p+1}} \left(\int_{0}^{t_0}  (1+\tilde{r})^{-\ga} \tilde{r}^{1-p} d\tilde{r} \right)^{\frac{1}{p+1}}\\
  &\les \mathcal{E}_{0, p-1}^{\frac{p}{p+1}} \left(1+(1+t_0)^{2-p-\ga}\right)^{\frac{1}{p+1}}\\
  &\les \mathcal{E}_{0, p-1}^{\frac{p}{p+1}}
\end{align*}
by choosing $\ga=2-p+\ep$ such that $0<\ep<2p-3$. This shows that
\begin{align*}
  |\phi|\les \sqrt{\mathcal{E}_{1, 0}} +\mathcal{E}_{0, p-1}^{\frac{p}{p+1}}.
\end{align*}
By our definition, the implicit constant relies only on $p$. Hence the uniform boundedness of the main Theorem \ref{thm:main:p2} follows.

\bibliography{shiwu}{}
\bibliographystyle{plain}

\bigskip

Beijing International Center for Mathematical Research, Peking University,
Beijing, China

\textsl{Email}: shiwuyang@bicmr.pku.edu.cn

\end{document}